\definecolor{medium-blue}{rgb}{0,0,.8}
\numberwithin{equation}{section}
\DeclareMathOperator{\osc}{\text{osc }}
\DeclareMathOperator{\diam}{\text{diam}}
\DeclareMathOperator{\diver}{\text{div}}
\DeclareMathOperator{\dist}{\text{dist}}
\newcommand{\e}{\varepsilon}
\newcommand{\p}{\partial}
\newcommand{\F}{\mathbf{F}}
\def\R{\mathbb{R}}
\newtheorem{thm}{Theorem}[section]
\newtheorem{lem}[thm]{Lemma}
\newtheorem{prop}[thm]{Proposition}
\theoremstyle{definition}
\newtheorem{defn}[thm]{Definition}
\newtheorem{rem}[thm]{Remark}
\begin{document} 

\title[Green's function approach to linearized Monge--Amp\`ere equations ]{A Green's function approach to linearized Monge--Amp\`ere equations in divergence form and application to singular Abreu type equations}
\author{Chong Gu}
\address{Department of Mathematics, Indiana University,
Bloomington, IN 47405, USA}
\email{chongu@iu.edu}
\author{Nam Q. Le}
\address{Department of Mathematics, Indiana University,
Bloomington, IN 47405, USA}
\email{nqle@iu.edu}
\thanks{The authors were supported in part by the National Science Foundation under grant DMS-2452320.}

\subjclass[2020]{35J08, 35B45, 35J70, 35J75, 35J96}
\keywords{Linearized Monge--Amp\`ere equation, Monge--Amp\`ere equation, Green's function, Lorentz space, Harnack inequality, H\"older estimate, singular Abreu equation}

\begin{abstract}
In this paper, we establish local and global regularity estimates for linearized Monge--Amp\`{e}re equations in divergence form via 
critical Lorentz space estimates for the Green's function of the linearized Monge--Amp\`{e}re operator and its gradient. These estimates hold under suitable conditions on the data and 
the convex Monge--Amp\`ere potential is assumed to have Hessian determinant bounded between two positive constants.  
As an application, we obtain the solvability in all dimensions of the second boundary value problem for a class of singular fourth-order Abreu type equations that arise from the approximation analysis of variational problems subject to convexity constraints.
\end{abstract}
\maketitle

\section{Introduction}
In this paper, we establish local and global  regularity estimates for solutions to linearized Monge--Amp\`{e}re equations in divergence form
\begin{equation}\label{eqdiv}
	L_u v: = D_i(U^{ij}D_j v) = \diver \F +\mu,
\end{equation}
where $\F$ is a vector field and $\mu$ is a signed Radon measure, via critical Lorentz space estimates for the Green's function, also known as the fundamental solution, of the linearized Monge--Amp\`{e}re operator 
\[
    L_u:= D_i(U^{ij} D_j).
\]
Throughout, \[U = (U^{ij})_{1\leq i, j\leq n}:= (\det D^2 u) (D^2 u)^{-1}\] is the cofactor matrix of the Hessian matrix $D^2 u$ of a convex Monge--Amp\`ere potential $u\in C^3(\Omega)$, where the function $u$ satisfies 
\begin{equation}\label{MAu}
	0 < \lambda \leq \det D^2 u \leq \Lambda \quad \text{ in }\Omega\subset\R^n,
\end{equation}
where $\lambda$ and $\Lambda$ are constants.
We always assume $n\geq 2$ and repeated indices are summed.  More precise assumptions on $ \F$ and $\mu$ will be given in corresponding theorems; see also Remark \ref{Fmurem}.

\medskip
Due to the divergence-free property of $U$, that is, $D_iU^{ij} = 0$ for all $j$, the operator $L_u$ can be rewritten in nondivergence form:
\[L_u =D_j (U^{ij}D_i)= U^{ij}D_{ij}.\] 
The coefficient matrix $U$ of $L_u$ arises from linearizing the Monge-Amp\`ere operator $\det D^2 u$.
One can also note that $L_u v$ is the coefficient of $t$ in the expansion
$$\det D^2 (u + t v) =\det D^2 u + t\, U^{ij} D_{ij}v + \cdots+ t^n \det D^2 v.$$

Under the assumption \eqref{MAu},  the cofactor matrix $U$ is positive definite, so \eqref{eqdiv} is an elliptic equation. However, it is in general degenerate and/or singular since the eigenvalues of $U$ could tend to zero and/or infinity.  This makes its analysis challenging.

\medskip
Caffarelli and Guti\'{e}rrez \cite{CG97} established a fundamental interior Harnack inequality for the homogeneous linearized Monge--Amp\`{e}re equation
$U^{ij}D_{ij} v=0$. Their result is an affine invariant version of the classical Harnack inequality of Moser, and Krylov and Safonov for elliptic equations in divergence form and nondivergence form, respectively. 
Interior H\"older estimates consequently follow. Central in their analysis is the geometry of sections of the Monge--Amp\`ere potential function $u$ which replace Euclidean balls. Sections are defined as sublevel sets of convex functions after subtracting their supporting hyperplanes.
Since then, many developments and applications have been obtained by many authors including \cite{CWZ, GN1, GN2, K, KLWZ, Le_Manu, Le_TAMS, Le_CMP, Le_Harnack, Le20, Le24, LN3, LS1, Mal13, Mal14, Mal17, S0, TiW, TW00, TW08, Wang25}.
We refer the reader to \cite{Le24} for an overview and will restrict ourselves to works closely related to the subject of this paper.

\medskip
Equations of the type \eqref{eqdiv} arise in several contexts such as the semigeostrophic equations in meteorology \cite{ACDF, BB, CF, CNP, Le_CMP, Loe05} and singular Abreu equations in the calculus of variations with a convexity constraint \cite{CR, KLWZ, Le_CPAM, Le23, LZ}. 

\medskip
Let us mention some works related to the case of $\mu=0$ under \eqref{MAu}. Loeper \cite{Loe05} established H\"older estimates for  \eqref{eqdiv} using integral information on $v$ under the assumption that $\det D^2 u$ is close to a constant.
When $n = 2$ and  \eqref{MAu} is satisfied, the second author  \cite{Le_CMP, Le20} obtained interior and global H\"{o}lder estimates for \eqref{eqdiv} assuming $\F \in L^\infty(\Omega; \R^n) \cap W^{1, n}_{\text{loc}}(\Omega; \R^n)$.
With the additional assumption  $D^2 u\in L^{1+\varepsilon_n}(\Omega)$ for some $\varepsilon_n > (n+1)(n-2)/2$, these estimates were extended to
higher dimensions in \cite[Chapter 15]{Le24}. This assumption holds in dimension two in view of the $W^{2, 1+\varepsilon}$ estimates for the Monge--Amp\`{e}re equation established by
De Philippis--Figalli--Savin \cite{DPFS13} and Schmidt \cite{Sch}.
In view of Caffarelli's $W^{2, p}$ estimates \cite{Caff90} and Wang's counterexamples \cite{W95} for the Monge--Amp\`{e}re equation, the above integrability condition may be thought of as an assumption on $\Lambda/\lambda$ in \eqref{MAu}. Such higher integrability of $D^2 u$ was required for the Moser iteration.
Later, by assuming $(D^2 u)^{1/2}\F\in L^q(\Omega;\R^n)$ for some $q>n$ and using De Giorgi's iteration, Wang \cite{Wang25} 
established interior H\"{o}lder estimates for \eqref{eqdiv} relying on the $L^\infty$ norm of the solution $v$, while the second author's results only rely on the $L^p$ norm of $v$ for $p\in (1, +\infty)$. For all  bounded vector field $\F$, Wang's condition essentially requires
$D^2 u$ to be in $L^r$ where $r>n/2$. In this case, Wang's H\"older estimates were established via Moser's iteration technique by Kim \cite{K} who also studied equations with drifts. 

\medskip
Very recently, when $\F=0$, Cui--Wang--Zhou \cite{CWZ} use potential theory and Campanato type estimates to study equation \eqref{eqdiv} for signed Radon measures $\mu$ whose total variation on each compactly contained section of $u$ of height $h$ grows like $h^{\frac{n-2}{2}+\e}$ for some $\e>0$. In particular, they obtained interior H\"older estimates for  \eqref{eqdiv} under  \eqref{MAu} when $\mu=\diver \F_0$ is a negative Radon measure and $\F_0$ is a bounded vector field. Remarkably, these results give new interior higher-order estimates for singular Abreu equations.
In all these works \cite{CWZ, K, Wang25}, the Monge--Amp\`ere Sobolev inequality \cite{Le_CMP,TiW} plays an important role.

\medskip
We will give here a unified, different proof of the interior H\"older estimates in \cite{CWZ, Wang25}. Moreover, we also obtain a global version. Our approach uses fine properties of the Green's function of the linearized Monge--Amp\`{e}re operator $L_u$ and avoids the  Monge--Amp\`ere Sobolev inequality. In the context of linearized Monge--Amp\`{e}re  equations, our approach has its root in the previous work of Nguyen and the second author \cite{LN3} where global H\"older estimates were obtained using $L^q$ norm of the right-hand side where $q>n/2$.
Crucial to our approach are 
uniform estimates 
for solutions to 
\eqref{eqdiv} with zero boundary value. Our approach gives uniform estimates assuming only $(D^2 u)^{1/2}\F$ being in the Lorentz space $L^{n, 1}$ when $n\geq 3$.

\medskip
As an application, we obtain the solvability of the second boundary value problem for singular Abreu type equations, some of which
 in dimensions at least three are not accessible by previous approaches. 
 
\medskip
The analysis of the Green's function of the linearized Monge--Amp\`{e}re operator starts with the work of Tian--Wang \cite{TiW} and then Maldonado \cite{Mal13, Mal14, Mal17} and the second author \cite{Le_Manu, Le_TAMS, Le_CMP, Le20, Le24}. 
Let $g_S(\cdot, y)$ be the Green's function of $L_u$ in $S\subset \Omega$ with pole $y \in S$. 
Building on properties of the Green's function put together in \cite{Le24}, we will establish interior and global estimates for $(D^2u)^{-1/2} D_x g_S(\cdot, y)$ in the weak $L^{n/(n-1)}$ space, also known as the Lorentz space $L^{n/(n-1),\infty}$, when $n\geq 3$. This space is critical as can be seen for the case of $u(x)=|x|^2/2$ where $L_u$ now becomes the Laplace operator, and its Green's function does not belong to smaller Lorentz spaces. Our results are affine analogues of those in Gr\"uter--Widman \cite{GW} for uniformly elliptic equations in dimensions $n\geq 3$. We have an extra logarithmic factor for Lorentz space type estimates in dimension two. These estimates build upon  Lorentz space type estimates for the Green's function $g_S$ itself; in this case, we have more refined analysis as we have a good control on the shapes of the superlevel sets of $g_S$. 

\medskip
The quantity $(D^2u)^{-1/2} D_x g_S(\cdot, y)$ was first studied by Maldonado \cite{Mal13, Mal17} as a natural Monge--Amp\`{e}re gradient in the sense of the Monge--Amp\`{e}re Sobolev inequality obtained in \cite{Le_CMP, TiW}. Our estimates for $(D^2u)^{-1/2} D_x g_S(\cdot, y)$ in strong $L^p$ spaces where $p\in (1, n/(n-1))$ follow from  Lorentz space estimates and they recover and extend results established in \cite{Mal17} in the case of sections compactly contained in the domain. 
As applications, we will use these estimates to obtain local and global uniform and  H\"older estimates for solutions to \eqref{eqdiv}  under suitable assumptions.

\medskip
Before stating our main results, 
we introduce some relevant concepts. 

\begin{defn}[Sections]
	Let $u\in C^1(\Omega)\cap C(\overline{\Omega})$ be a convex function and $h > 0$. 
If $x_0 \in \overline \Omega$, then the \textit{section of $u$ centered at $x_0$ with height $h$} is defined by 
	           \[
	               S_u(x_0, h):= \big\{x \in \overline{\Omega}: u(x) < u(x_0) + Du(x_0)\cdot (x - x_0) + h\big\}.
	           \]
	           In the case of $x_0\in\p\Omega$, we require that $u$ is differentiable at $x_0$.
\end{defn}

\begin{defn}[Green's function of the linearized Monge--Amp\`{e}re operator] 
\label{Gdefn}
Let $\Omega$ be a bounded convex domain in $\R^n$ and $u\in C^3(\Omega)$ be a convex function satisfying \eqref{MAu}. Assume $V \subset  \Omega$ is open. 
  Let $\delta_y$ be the Dirac measure giving the unit mass to $y$. 
Then, for each $y \in V$, there exists a unique function $g_V(\cdot, y): V \rightarrow[0, \infty]$ with the following properties:
    \begin{enumerate}
        \item[(a)] $g_V(\cdot, y) \in W_0^{1, q}(V) \cap W^{1,2}\left(V \backslash B_r(y)\right)$ for all $q<\frac{n}{n-1}$ and all $r>0$.
        \item[(b)] $g_V(\cdot, y)$ is a weak solution of
            \[
                    -D_i\left(U^{i j} D_j g_V(\cdot, y)\right)  =\delta_y \quad \text { in } V, \quad
	               g_V(\cdot, y)  =0 \quad \text { on } \p V,
            \]
            that is, denoting $D_jg_V(x, y) = D_{x_j}g_V(x, y) = \frac{\p}{\p x_j}g_V(x, y)$, we have
	           \begin{equation}\label{Gdef}
	            \int_V U^{i j} D_j g_V(x, y) D_i \psi(x) \; dx=\psi(y) \quad \text{ for all } \psi \in C_c^{\infty}(V).
	           \end{equation}
             \end{enumerate}
  
    We call $g_V(\cdot, y)$ the \textit{Green's function} of the linearized Monge--Amp\`{e}re operator $L_u=D_i\left(U^{i j} D_j\right)$ in $V$ with \textit{pole} $y$. We set $g_V(y, y) = +\infty$.
\end{defn}
\begin{defn}[Proper sets]
We call a nonempty open set $V\subset\R^n$ {\it proper} if $V$ satisfies an exterior cone condition at every boundary point. Examples of such proper sets include intersections of a bounded convex domain $\Omega\subset\R^n$ with sections of a convex function $u\in C^1(\overline{\Omega})$ or open balls.
\end{defn}
\begin{defn}[Lorentz spaces]
Let $\Omega \subset \R^n$ be bounded and open, and $f: \Omega \longrightarrow \R$ be a Lebesgue-measurable function. 
For $1 \leq p < \infty$ and $0 < q \leq \infty$, we define
\[
    \|f\|_{L^{p, q}(\Omega)}:=
    \begin{cases}
        p^{\frac{1}{q}}\Big(\int_0^{\infty} t^q |\{x\in \Omega: |f(x)| > t\}|^{\frac{q}{p}} \frac{d t}{t}\Big)^{\frac{1}{q}} &\text{ if } q<\infty,\\
        \sup _{t>0} t |\{x\in \Omega: |f(x)| > t\}|^{1/p}  &\text{ if } q=\infty.
    \end{cases}
\] 
The \textit{Lorentz space $L^{p, q}(\Omega)$} consists of all Lebesgue-measurable functions $f$ defined on $\Omega$ such that $\|f\|_{L^{p, q}(\Omega)} < \infty$. The Lorentz space $L^{p, \infty}$ coincides with the \textit{weak $L^p$ space}, and the Lorentz space $L^{p, p}$ is the usual $L^p$ space. 
\end{defn}

\medskip
 {\bf Notation}. We use $c=(\ast,\ldots,\star)$ and $C= C(\ast,\ldots,\star)$ to denote positive constants $c, C$ depending on the quantities appearing in the parentheses; they may change from line to line. We use $D_i=\p/\p_{x_i}$ and $D_x f(x, y)$ to denote the gradient of $f$ in the $x$-variable.
 For a Lebesgue measurable set $E \subset \R^n$,   $|E|$ denotes its $n$-dimensional Lebesgue measure. We use $\mathcal{H}^{n-1}$ to denote the $(n-1)$-dimensional Hausdorff measure.
 For a locally integrable function $\mu:\R^n\to\R$, 
 we can view it as a signed Radon measure and denote $|\mu|(A) = \int_A |\mu|\, dx$ for any Lebesgue measurable set $A\subset\R^n$.

\medskip

The rest of this paper will be organized as follows. We state our main results in Section \ref{sec_res}. In Section \ref{sec: pre}, we recall some background materials for our analysis
and state a key estimate for the Monge--Amp\`ere gradient of the Green's function. 
In Section \ref{sec: int_G}, we will prove Theorems \ref{Green_int_thm} and \ref{Green_gl_thm} on Lorentz space estimates for the Green's function. In Section \ref{sec_max}, we establish maximum principles for  linearized Monge--Amp\`ere equation \eqref{eqdiv}. 
We prove an interior Harnack inequality in Theorem \ref{int_Har} and interior H\"older estimates for solutions to  \eqref{eqdiv} in Section \ref{sec_Har}. The proof of Theorem \ref{glb_holder} on global H\"older estimates will be given in Section \ref{sec_glH}.
We present an application to the solvability of singular Abreu equations in Section \ref{sec_Ab}.

\section{Statement of the main results}
\label{sec_res}
In this section, we state our main results and give some brief comments on them. 

\subsection{Lorentz space estimates for the Green's function} 
Our first main result establishes Lorentz space estimates for the Green's function 
of the linearized Monge--Amp\`{e}re operator in compactly contained sections.

\begin{thm}\label{Green_int_thm}
	Let $u\in C^3(\Omega)$ be a convex function satisfying \eqref{MAu}, where $\Omega\subset \R^n$. Assume $S_u(x_0, 2h) \Subset \Omega$ where $x_0\in\Omega$ and $h>0$. Let $g_{S_h}(\cdot, y)$ be the Green's function of the linearized Monge--Amp\`{e}re operator $D_i(U^{ij}D_j)$ in $S_u(x_0, h)$ with pole $y \in S_u(x_0, h)$. Then, for $t > 0$, 
    \begin{equation}\label{int_Dg_level}
    \begin{split}
        |\{x \in S_u(x_0, h) : |(D^2u(x))^{-1/2}D_xg_{S_u(x_0, h)}(x,y)| > t\}|\\  \leq 
        \begin{cases}
             Ct^{-2}\big(h+ \log \max\{t, 1\}\big) &\text{ if } n = 2,\\
             Ct^{-\frac{n}{n-1}} &\text{ if } n \geq 3,
        \end{cases}
        \end{split}
    \end{equation}
    where $C = C(n, \lambda, \Lambda) > 0$. 
    
    Consequently, for all $y \in S_u(x_0, h)$, 
    \begin{enumerate}
        \item[$\bullet$] if $n\geq 3$, then
        \begin{equation}\label{int_Dg_1}
            \|(D^2u)^{-1/2}D_xg_{S_u(x_0, h)}(\cdot,y)\|_{L^{\frac{n}{n-1}, \infty}(S_u(x_0, h))} \leq C(n, \lambda, \Lambda);
        \end{equation}    
        \item[$\bullet$] if $n \geq 2$ and $p \in \big( 1, \frac{n}{n-1}\big)$, then 
        \begin{equation}\label{int_Dg_2}
            \int_{S_u(x_0, h)} |(D^2 u(x))^{-1/2} D_x g_{S_u(x_0, h)}(x, y)|^p \; dx \leq C(n, \lambda, \Lambda, p)h^{\frac{n}{2} - \frac{n-1}{2}p}.     
        \end{equation}
    \end{enumerate}
\end{thm}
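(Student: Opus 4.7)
The strategy is the Grüter--Widman approach, adapted to the affine geometry of sections and to the Monge--Ampère gradient $F(x) := |(D^2 u(x))^{-1/2} D_x g(x,y)|$, where I abbreviate $g := g_{S_u(x_0,h)}(\cdot,y)$. The plan is to couple an $L^2$ energy identity for $g$ with decay estimates for $g$ itself recalled in Section \ref{sec: pre}. First, extending the weak formulation \eqref{Gdef} from $C_c^\infty$ to test functions in $W_0^{1,2}(S_u(x_0,h))$ by a routine approximation (justified by property (a) of Definition \ref{Gdefn} together with the fact that $g$ blows up at $y$, so any set $\{g < t_0\}$ is bounded away from $y$), I would plug in $\psi := \min(g, t_0)$ for $t_0 > 0$. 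Since $\psi(y) = t_0$ and $D\psi = Dg \cdot \chi_{\{g < t_0\}}$, this produces the identity
\[
\int_{\{g < t_0\}} U^{ij} D_i g\, D_j g \, dx = t_0,
\]
which, together with $U^{ij} D_i g D_j g = (\det D^2 u)\, F^2$ and \eqref{MAu}, gives the energy bound $\int_{\{g < t_0\}} F^2 \, dx \leq \lambda^{-1} t_0$.

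Next, for arbitrary $t, t_0 > 0$, a decomposition by level sets of $g$ followed by Chebyshev's inequality yields
\[
|\{F > t\}| \leq |\{F > t\} \cap \{g \leq t_0\}| + |\{g > t_0\}| \leq \frac{t_0}{\lambda\, t^2} + |\{g > t_0\}|.
\]
The second piece is controlled by the preliminary Green's function estimates (the key estimate flagged in Section \ref{sec: pre}): in dimensions $n \geq 3$ these take the form $|\{g > t_0\}| \leq C\, t_0^{-n/(n-2)}$, while in $n = 2$ the logarithmic behavior of $g$ yields $|\{g > t_0\}| \leq C h\, e^{-c\, t_0}$ together with the trivial cap $|\{g > t_0\}| \leq |S_u(x_0,h)| \leq C h$. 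Optimizing $t_0$ in $t$ then produces \eqref{int_Dg_level}: for $n \geq 3$, the choice $t_0 = t^{(n-2)/(n-1)}$ balances both contributions against $t^{-n/(n-1)}$, while for $n = 2$, the choice $t_0 = h + \log\max\{t, 1\}$ balances the two terms and produces the asserted logarithmic correction.

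The weak-type bound \eqref{int_Dg_1} is then immediate from the definition of the $L^{n/(n-1), \infty}$ norm. For \eqref{int_Dg_2}, I would apply the layer cake formula and split the integration range at $t_* = h^{-(n-1)/2}$, using the trivial bound $|\{F > t\}| \leq |S_u(x_0,h)| \leq C h^{n/2}$ (via John's lemma for sections) for $t < t_*$ and \eqref{int_Dg_level} for $t > t_*$; the condition $p < n/(n-1)$ ensures convergence of the tail integral, and a direct computation produces the exponent $h^{n/2 - (n-1)p/2}$.

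The main obstacle will be the rigorous derivation of the energy identity and, more fundamentally, access to sharp affine-invariant level set bounds for $g$ itself with constants depending only on $n, \lambda, \Lambda$. The former is a standard approximation that nonetheless has to be carried out carefully around the pole $y$; the latter is exactly the content of the Green's function bounds prepared in Section \ref{sec: pre}, on which the whole argument rests. Once both ingredients are in hand, the remaining combinatorial part --- level set decomposition together with a one-parameter optimization in $t_0$ --- is essentially algebraic.
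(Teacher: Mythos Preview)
Your approach is essentially the same as the paper's: the energy identity you derive is exactly Proposition \ref{keyprop}, the level-set bounds on $g$ you invoke are Lemma \ref{Gvol}, and your optimization of $t_0$ matches the paper's choice of $k$ verbatim. So for \eqref{int_Dg_level} and \eqref{int_Dg_1} there is nothing to add.

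For \eqref{int_Dg_2} there is one small but genuine gap in the two-dimensional case. Your plan is to feed the already-optimized bound \eqref{int_Dg_level} into the layer-cake formula and split at $t_* = h^{-1/2}$. For $n\geq 3$ this works cleanly (the paper instead uses the Lorentz H\"older inequality, but your route is equally valid and perhaps more elementary). For $n=2$, however, the logarithmic correction in \eqref{int_Dg_level} survives the integration: the tail contribution
\[
\int_{t_*}^\infty t^{p-3}\log t\,dt \;\asymp\; t_*^{p-2}\log t_* \;=\; h^{(2-p)/2}\cdot \tfrac{1}{2}|\log h|
\]
carries an extra $|\log h|$ factor, so you do not recover the clean exponent $h^{1-p/2}$ uniformly in $h$. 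The paper circumvents this by \emph{not} using the optimized bound \eqref{int_Dg_level} at this stage; instead it returns to the raw inequality $|\{F>t\}|\leq k/(\lambda t^2)+C h\,2^{-k/\tau_0}$, dominates the exponential by $C_q k^{-q}$ for a large auxiliary $q$, and re-optimizes $k=k(t,h)$ before applying the layer-cake formula. The resulting polynomial decay in $t$ has exponent $2q/(1+q)$, which can be pushed arbitrarily close to $2$ and in particular made strictly larger than $p$, eliminating the logarithm. So your strategy is right in spirit, but for $n=2$ you must optimize \emph{after} setting up the layer-cake integral rather than before.
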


Next, we state global analogues of estimates in Theorem \ref{Green_int_thm} under suitable assumptions for which Savin's boundary localization theorem for the Monge--Amp\`ere equation \cite[Theorem 3.1]{S1} is applicable. 

\medskip
{\bf Global structural assumptions.} Let $\Omega \subset \mathbb{R}^n$ be a convex domain and assume that there exists a constant $\rho>0$ such that
\begin{equation}\label{glb_1}
     \Omega \subset B_{1 / \rho}(0) \subset \mathbb{R}^n
\end{equation}
and for each $y \in \p \Omega$, 
\begin{equation}
   \text{there is an interior ball } B_\rho(z) \subset \Omega \text{ such that } y \in \p B_\rho(z). 
\end{equation}
Let $u \in C^{1,1}(\overline{\Omega}) \cap C^3(\Omega)$ be a convex function satisfying
\begin{equation}
    0<\lambda \leq \det D^2 u \leq \Lambda \quad \text { in } \Omega.
\end{equation}
Assume further that on $\p \Omega, u$ separates quadratically from its tangent hyperplanes; namely, for all $x_0, x \in \p \Omega$, we have
\begin{equation}\label{glb_4}
    \rho\left|x-x_0\right|^2 \leq u(x)-u\left(x_0\right)-D u\left(x_0\right) \cdot\left(x-x_0\right) \leq \rho^{-1}\left|x-x_0\right|^2.
\end{equation}

\medskip
\begin{thm}\label{Green_gl_thm}
	Assume that $u$ and $\Omega$ satisfy \eqref{glb_1}--\eqref{glb_4}. Let $V\subset\Omega$ be open, proper, and contained in a section $S_u(x_0, h)$ of $u$ of height $h>0$. Let $g_V(\cdot, y)$ be the Green's function of the linearized Monge--Amp\`{e}re operator $D_i(U^{ij} D_j)$ in $V$ with pole $y \in V$. Then, for  $t >0$, 
    \begin{equation}\label{glb_D_lvl}
        |\{x \in V : |(D^2 u(x))^{-1/2} D_x g_V(x, y)| > t\}| \leq 
    \begin{cases}
         Ct^{-2} (h + \log \max\{t, 1\}) &\text{ if } n=2, \\
         Ct^{-\frac{n}{n-1}} &\text{ if } n \geq 3,
    \end{cases}
    \end{equation}
    where $C = C(n, \lambda, \Lambda, \rho) > 0$. 
    
    Consequently, for all $y\in V$,
    \begin{enumerate}
   \item[$\bullet$] if $ n\geq 3$, then
        \begin{equation}\label{glb_wLp}
        	\|(D^2 u)^{-1/2} D_x g_V(\cdot, y)\|_{L^{\frac{n}{n-1}, \infty}(V)} \leq C(n, \lambda, \Lambda, \rho).
        \end{equation}
       \item[$\bullet$] if $n\geq 2$ and $p\in \big( 1, \frac{n}{n-1}\big)$, then
        \begin{equation}\label{glb_D_Lp}
		\int_V |(D^2 u(x))^{-1/2} D_x g_V(x, y)|^p \; dx \leq C(n, \lambda, \Lambda, \rho, p) h^{\frac{n}{2} - \frac{n-1}{2}p}.  
	    \end{equation}
         \end{enumerate}
\end{thm}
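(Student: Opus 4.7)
My plan is to establish the level set estimate \eqref{glb_D_lvl} first; the bounds \eqref{glb_wLp} and \eqref{glb_D_Lp} then follow by standard Lorentz space and layer-cake arguments. The overall strategy parallels the interior Theorem \ref{Green_int_thm}, with Savin's boundary localization theorem (applicable under \eqref{glb_1}--\eqref{glb_4}) furnishing the uniform section volume estimate $|S_u(y,\tau)|\le C(n,\lambda,\Lambda,\rho)\,\tau^{n/2}$ even when the section meets $\p\Omega$.

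Write $g=g_V(\cdot,y)$ and $E_t=\{x\in V:|(D^2u(x))^{-1/2}D_x g(x)|>t\}$, and split $E_t=(E_t\cap\{g<s\})\cup(E_t\cap\{g\ge s\})$ for a parameter $s>0$ to be optimized. For the first piece, inserting (a regularization of) the admissible truncation $\min(g,s)\in W^{1,2}_0(V)$ into the weak formulation \eqref{Gdef} yields the energy identity
\[
\int_{\{g<s\}}U^{ij}D_i g\,D_j g\,dx=s.
\]
Since $|(D^2u)^{-1/2}D_x g|^2=(\det D^2u)^{-1}U^{ij}D_i g\,D_j g\le\lambda^{-1}U^{ij}D_i g\,D_j g$, Chebyshev gives $|E_t\cap\{g<s\}|\le Cs/t^2$. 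For the second piece, I invoke the Green's function upper bound assembled in Section \ref{sec: pre} (building on \cite{Le24,Mal13,Mal17}): under \eqref{glb_1}--\eqref{glb_4}, there exists $\tau(s)$ such that $\{g>s\}\subset S_u(y,\tau(s))$, with $\tau(s)=Cs^{-2/(n-2)}$ when $n\ge3$ and $\tau(s)$ decaying exponentially in $s$ when $n=2$, whence $|\{g\ge s\}|\le C\tau(s)^{n/2}$. Balancing the two pieces --- choosing $s\sim t^{(n-2)/(n-1)}$ for $n\ge3$ and $s\sim\log\max(t,1)$ for $n=2$ --- produces \eqref{glb_D_lvl}, with the trivial bound $|E_t|\le|V|\le Ch^{n/2}$ handling small values of $t$.

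Given \eqref{glb_D_lvl}, estimate \eqref{glb_wLp} is immediate from the definition of the Lorentz quasinorm. For \eqref{glb_D_Lp} with $1<p<n/(n-1)$, apply the layer-cake formula
\[
\int_V|(D^2u)^{-1/2}D_x g|^p\,dx=p\int_0^\infty t^{p-1}|E_t|\,dt,
\]
split at the crossover threshold $T=h^{-(n-1)/2}$ where the two available bounds on $|E_t|$ become equal, and use $|E_t|\le Ch^{n/2}$ for $t\le T$ together with $|E_t|\le Ct^{-n/(n-1)}$ for $t\ge T$; both integrals evaluate to a constant multiple of $h^{(n-(n-1)p)/2}$, exactly matching the exponent $n/2-(n-1)p/2$ in \eqref{glb_D_Lp}.

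The technical heart of the argument is the uniform Green's function upper bound $\{g_V(\cdot,y)>s\}\subset S_u(y,\tau(s))$ in the global setting, since $V$ may touch $\p\Omega$ and the pole $y$ may sit arbitrarily close to the boundary, where interior estimates are unavailable. I expect to handle this via a comparison argument dominating $g_V$ by $g_W$ on a convenient slightly larger proper domain $W$ (for instance $W=\Omega\cap S_u(y,Ch)$), combined with boundary Green's function bounds from Section \ref{sec: pre} that rely on Savin's boundary localization theorem applied at the pole $y$. Once this ingredient is in place, the energy splitting and optimization above run essentially mechanically, and the $\rho$-dependence of the constants enters only through Savin's theorem.
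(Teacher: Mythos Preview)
Your approach is essentially the paper's: the energy identity you derive from testing with $\min(g,s)$ is exactly Proposition~\ref{keyprop}, the inclusion $\{g>s\}\subset S_u(y,\tau(s))$ with the stated $\tau(s)$ is Lemma~\ref{lem: glpG}, and your optimization over $s$ reproduces Step~1 of the proof of Theorem~\ref{Green_int_thm}. The paper's proof of Theorem~\ref{Green_gl_thm} simply says to repeat that argument with Lemma~\ref{lem: glpG} in place of Lemma~\ref{Gvol}, so you have reconstructed it.

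One genuine gap to flag in your derivation of \eqref{glb_D_Lp} when $n=2$: you write ``use $|E_t|\le C t^{-n/(n-1)}$ for $t\ge T$,'' but for $n=2$ the level-set estimate \eqref{glb_D_lvl} only gives $|E_t|\le C t^{-2}(h+\log\max\{t,1\})$. Feeding this into the layer-cake integral over $[T,\infty)$ with $T=h^{-1/2}$ produces a spurious factor $|\log h|$ in the final bound. The paper avoids this by \emph{not} using the already-optimized bound \eqref{glb_D_lvl}; instead it returns to the raw inequality $|E_t|\le k/(\lambda t^2)+C h\,2^{-k/\tau_0}$, dominates the exponential by $C(q)k^{-q}h$ for a free parameter $q$, and then chooses $k=t^{2/(1+q)}h^{1/(1+q)}$ so that both terms carry the same power of $h$. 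Taking $q=(p+1)/(2-p)$ and $\tau=h^{-1/2}$ in the layer-cake split then yields the clean exponent $h^{1-p/2}$. Your argument for $n\ge 3$ is fine (and is equivalent to the paper's use of the Lorentz--H\"older inequality with $\chi_V$).
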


\medskip
Under the assumptions \eqref{glb_1}--\eqref{glb_4}, by \cite[Lemma 9.7]{Le24}, there exists $M=M(n,\lambda,\Lambda,\rho)>0$ such that \[\overline{\Omega}\subset S_u(x_0, M) \quad\text{for all}\quad x_0\in\overline{\Omega}.\] Therefore, $\Omega$ is a section of $u$ with height comparable to $1$ and Theorem \ref{Green_gl_thm} is applicable to $V=\Omega$. 

\subsection{Applications of Lorentz space estimates to the linearized Monge--Amp\`{e}re equations} 
Applying Theorem 
\ref{Green_int_thm} and the Caffarelli--Guti\'{e}rrez Harnack inequality (Theorem \ref{thm: CG_Harnack}), we obtain a Harnack inequality for nonnegative solutions to \eqref{eqdiv}.

\begin{thm} \label{int_Har}
    Let $\Omega \subset \R^n$ and $u\in C^3(\Omega)$ be a convex function satisfying \eqref{MAu}. Let $\F \in  W^{1, n}_{\text{loc}}(\Omega; \R^n)$ and $\mu\in L^n_{\text{loc}}(\Omega)$. Suppose that $S_u(x_0, 2h) \Subset \Omega$ where $x_0\in\Omega$ and $h>0$. Let $v\in W^{2, n}_{\text{loc}}(S_u(x_0, h))\cap C(\overline{S_u(x_0, h)})$ be a nonnegative solution to
    \[U^{ij}D_{ij} v = \diver \F +\mu \quad\text{in} \quad S_u(x_0, h).\]
    \begin{enumerate}
        \item[(i)] Assume $ n\geq 3$, $\mu=0$, and $ (D^2u)^{1/2} \F \in L^{n, 1}(S_u(x_0, h);\R^n)$. Then 
        \begin{equation}\label{int_H_wLp}
            \sup_{S_u(x_0, h/2)} v \leq C(n, \lambda, \Lambda) \Big(  \inf_{S_u(x_0, h/2)} v + 
                \|(D^2u)^{1/2} \F \|_{L^{n, 1}(S_u(x_0, h))}\Big).
        \end{equation}
        \item[(ii)] Assume that 
        $(D^2u)^{1/2} \F \in L^{q}(S_u(x_0, h);\R^n)$ for some $q > n$ and
             there exist $M_0\geq 0$ and  $\e>0$ such that
  \begin{equation}\label{muMH}|\mu|(S_u(z, s)) \leq M_0 s^{\frac{n-2}{2} +\e}\quad \text{for all sections } S_u(z, s)\Subset S_u(x_0, 2h).
         \end{equation}
         Then
        \begin{equation}\label{int_H_Lp}
            \sup_{S_u(x_0, h/2)} v \leq C(n, \lambda, \Lambda) \Big(  \inf_{S_u(x_0, h/2)} v + 
                C_\ast \|(D^2u)^{1/2} \F \|_{L^q(S_u(x_0, h))}h^{\frac{q-n}{2q}}+ C_\star M_0 h^{\e} \Big),
        \end{equation}
        where $C_\ast = C_\ast(n, \lambda, \Lambda, q)>0$ and $C_\star = C_\star(n, \lambda, \Lambda, \e)>0$.
          \end{enumerate}    
\end{thm}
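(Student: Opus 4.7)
The plan is to split $v = w + \varphi$, where $\varphi \in W^{1,p}_0(S_u(x_0, h))$ (for any $p \in (1, n/(n-1))$) is the unique weak solution of the Dirichlet problem
\[
L_u \varphi = \diver \F + \mu \quad \text{in } S_u(x_0, h), \qquad \varphi = 0 \quad \text{on } \p S_u(x_0, h),
\]
so that $w := v - \varphi$ is a continuous weak solution of the homogeneous equation $L_u w = 0$. Since $|v-w| = |\varphi| \le \|\varphi\|_{L^\infty(S_u(x_0, h))}$, applying the Caffarelli--Guti\'errez Harnack inequality to the nonnegative function $w + \|\varphi\|_{L^\infty(S_u(x_0, h))}$ immediately reduces the proofs of \eqref{int_H_wLp} and \eqref{int_H_Lp} to a uniform sup-bound on $\varphi$. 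Testing the equation for $\varphi$ against $g_{S_u(x_0, h)}(\cdot, y)$, with a standard approximation of the pole $y$, yields the Green's representation
\[
\varphi(y) = -\int_{S_u(x_0, h)} \F(x)\cdot D_x g_{S_u(x_0, h)}(x, y)\, dx - \int_{S_u(x_0, h)} g_{S_u(x_0, h)}(x, y)\, d\mu(x).
\]

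For part (i), where $\mu = 0$ and $n \geq 3$, the idea is to factor the integrand as $\F\cdot D_x g = [(D^2 u)^{1/2}\F]\cdot[(D^2 u)^{-1/2} D_x g]$ and then apply the H\"older inequality in the Lorentz duality pairing $L^{n, 1}\times L^{n/(n-1), \infty}$. The second factor is uniformly bounded in $L^{n/(n-1), \infty}(S_u(x_0, h))$ by $C(n, \lambda, \Lambda)$ thanks to \eqref{int_Dg_1}, so that $\|\varphi\|_{L^\infty}\le C(n, \lambda, \Lambda)\|(D^2u)^{1/2}\F\|_{L^{n, 1}(S_u(x_0, h))}$, and the Harnack step above yields \eqref{int_H_wLp}.

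For part (ii), I would treat the two terms in the representation separately. For the $\F$-integral, ordinary H\"older with conjugate exponents $q$ and $p := q/(q-1) \in (1, n/(n-1))$ combined with the strong $L^p$ bound \eqref{int_Dg_2} gives, after simplifying the exponent via $\frac{1}{p}\bigl(\frac{n}{2} - \frac{n-1}{2}p\bigr) = \frac{q-n}{2q}$, a contribution of the form $C_\ast \|(D^2u)^{1/2}\F\|_{L^q} h^{(q-n)/(2q)}$. For the $\mu$-integral, I would use the layer-cake identity $\int g\, d|\mu| = \int_0^\infty |\mu|(\{g > t\})\, dt$, together with the fact—central to the proof of Theorem \ref{Green_int_thm}—that the superlevel sets $\{g_{S_u(x_0, h)}(\cdot, y) > t\}$ are comparable to sections $S_u(y, s(t))$ of $u$, with $s(t) \simeq t^{-2/(n-2)}$ when $n \geq 3$ (and a logarithmic analogue when $n = 2$). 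Invoking \eqref{muMH} gives $|\mu|(\{g > t\}) \lesssim M_0\, s(t)^{(n-2)/2 + \e}$, up to the saturation by $|\mu|(S_u(x_0, h))$; splitting the $t$-integral at the threshold where $s(t) \simeq h$ and balancing the two resulting pieces produces matching contributions of order $M_0 h^{\e}$, yielding $\int g\, d|\mu| \le C(n,\lambda,\Lambda,\e) M_0 h^{\e}$.

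The main obstacle is the measure term in (ii): it requires sharp two-sided control of the superlevel sets of $g_{S_u(x_0, h)}(\cdot, y)$ by sections of $u$ and, when $n = 2$, careful handling of the logarithmic factor visible in \eqref{int_Dg_level}. The existence and $W^{1,p}_0$-regularity of $\varphi$ in the low-integrability range $p < n/(n-1)$ is itself a consequence of the Green's function analysis developed in Section \ref{sec: int_G}. Once $\|\varphi\|_{L^\infty}$ is controlled, the Caffarelli--Guti\'errez Harnack step is routine and delivers \eqref{int_H_wLp} and \eqref{int_H_Lp}.
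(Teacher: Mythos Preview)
Your proposal is correct and follows essentially the same route as the paper: solve the Dirichlet problem for the inhomogeneous equation (your $\varphi$ is the paper's $w$), apply the Caffarelli--Guti\'errez Harnack inequality to the homogeneous remainder, and bound $\|\varphi\|_{L^\infty}$ via the Green's representation together with the Lorentz/H\"older duality for the $\F$-term and the layer-cake/superlevel-set argument for the $\mu$-term (this is exactly the content of Proposition~\ref{intmax}). The only cosmetic differences are that the paper uses the ABP maximum principle to see $v-w\geq 0$ and then applies Harnack to $v-w$ directly, and it obtains the auxiliary solution in $W^{2,n}_{\rm loc}\cap C(\overline{S_h})$ from \cite[Theorem~9.30]{GT} rather than appealing to $W^{1,p}_0$ existence via Green's function theory.
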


\medskip

Next, we discuss H\"{o}lder continuity of solutions to \eqref{eqdiv}. Theorem \ref{int_Har} allows us to give a new proof of the interior H\"{o}lder estimates for \eqref{eqdiv} when 
$(D^2u)^{1/2} {\bf F} \in L^q(\Omega;\R^n)$ for some $q > n$ and $\mu\in L^n(\Omega)$ satisfying \eqref{muMH}.
These estimates are stated in Theorem \ref{thm: int}.  
  They were first proved by Wang \cite[Theorem 1.5]{Wang25} and Cui--Wang--Zhou \cite[Theorem 1.2]{CWZ}. Moreover, we are able to obtain the following global H\"{o}lder estimates.

\begin{thm}\label{glb_holder}
	Let $\Omega$ be a uniformly convex domain in $\R^n$, that is, for all $z\in \partial \Omega$, there is a ball $B_R(z_0)$ such that $\Omega \subset B_R(z_0)$ and $\partial \Omega \cap \partial B_R(z_0) = \{z\}$ for some uniform convexity radius $R>0$.  Let $\partial \Omega \in C^3$ and $u\in C^{1, 1}(\overline{\Omega})\cap C^3(\Omega)$ be a convex function satisfying \eqref{MAu} and $u|_{\partial \Omega} \in C^3$. Assume $\F \in W^{1, n}(\Omega; \R^n)$, $\mu\in L^n(\Omega)$, and $\phi \in C^\alpha(\partial \Omega)$ for some $\alpha \in (0, 1)$. Let $v \in  W^{2, n}_{\text{loc}}(\Omega)\cap C(\overline{\Omega})$ be the solution to 
	\[
		U^{ij}D_{ij} v = \diver \F + \mu \quad\text{ in } \Omega, \;\quad v = \phi\quad \text{ on } \partial \Omega.
	\]
	Assume $(D^2u)^{1/2} {\bf F} \in L^q(\Omega;\R^n)$ for some $q > n$, and
	there exist
	$M_0\geq 0$ and $\e>0$ such that
  \begin{equation}
  \label{muglthm}
  |\mu|(S_u(z, s)) \leq M_0 s^{\frac{n-2}{2} +\e}\quad \text{for all sections } S_u(z, s)\subset \overline{\Omega}.\end{equation}
Then, there exists $\beta = \beta(n, \lambda, \Lambda, q, \e, \alpha)\in (0, 1)$ such that
	\[
		\|v\|_{C^{\beta}(\overline{\Omega})}\leq 
		C \big(\|\phi\|_{C^\alpha(\partial \Omega)} + \| (D^2u)^{1/2} \F\|_{L^q(\Omega)} + M_0\big),
	\]
	where $C > 0$ depends only on $n, \lambda, \Lambda, q, \e, \alpha, \|u\|_{C^3(\partial \Omega)}$, $R$, and the $C^3$ regularity of $\partial \Omega$.
  \end{thm}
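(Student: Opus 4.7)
The plan is to reduce Theorem \ref{glb_holder} to two ingredients --- an interior H\"{o}lder estimate (already available as Theorem \ref{thm: int}) and a boundary H\"{o}lder estimate --- and then to glue them together via the standard near-boundary versus away-from-boundary dichotomy. The hypotheses (uniform convexity of $\Omega$, $\p\Omega\in C^3$, $u\in C^{1,1}(\overline{\Omega})$, and $u|_{\p\Omega}\in C^3$, together with \eqref{MAu}) imply the global structural assumptions \eqref{glb_1}--\eqref{glb_4} for some $\rho>0$ controlled by $R$, $\|u\|_{C^3(\p\Omega)}$, and the $C^3$ norm of $\p\Omega$, so that Theorem \ref{Green_gl_thm} applies on every proper set $V\subset\Omega$ contained in a section of $u$. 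The uniform bound $\|v\|_{L^\infty(\Omega)}\leq C(\|\phi\|_{L^\infty(\p\Omega)}+\|(D^2u)^{1/2}\F\|_{L^q(\Omega)}+M_0)$ then follows from the maximum principles of Section \ref{sec_max} combined with a Green's representation on $\Omega$: one pairs $(D^2u)^{-1/2}D_x g_\Omega\in L^{n/(n-1),\infty}$ against $(D^2u)^{1/2}\F\in L^{n,1}$ (from $q>n$) via H\"{o}lder's inequality in Lorentz spaces, and uses \eqref{glb_D_Lp} together with \eqref{muglthm} for the $\mu$ contribution.

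The heart of the argument is the boundary H\"{o}lder estimate: for each $z\in\p\Omega$ and small $h>0$, setting $V_h:=S_u(z,h)\cap\Omega$, the aim is to prove
\[
\osc_{V_h} v \;\leq\; Ch^{\beta_1}\bigl(\|\phi\|_{C^\alpha(\p\Omega)}+\|(D^2u)^{1/2}\F\|_{L^q(\Omega)}+M_0\bigr)
\]
for some $\beta_1\in(0,1)$. I would split $v=v_1+v_2$ on $V_h$, where $v_2\in W^{1,2}_0(V_h)$ solves $L_uv_2=\diver\F+\mu$ in $V_h$ and $v_1=v-v_2$ is $L_u$-harmonic in $V_h$ with $v_1=v$ on $\p V_h$. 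The term $v_2$ is bounded by $Ch^{\beta_1}(\|(D^2u)^{1/2}\F\|_{L^q(V_h)}+M_0)$ via Green's representation on $V_h$, the estimates \eqref{glb_wLp}--\eqref{glb_D_Lp}, and H\"{o}lder in Lorentz spaces; the positive power $h^{\beta_1}$ arises from the slack $q-n>0$ (pulling $\|(D^2u)^{1/2}\F\|_{L^{n,1}(V_h)}$ back to $L^q$ gives a factor $|V_h|^{1/n-1/q}\lesssim h^{(q-n)/(2q)}$) and from $\e>0$ in \eqref{muglthm}. For $v_1$, on $\p V_h\cap\p\Omega$ Savin's localization theorem \cite{S1} forces Euclidean diameter $\lesssim h^{1/2}$, hence $\osc\phi\lesssim h^{\alpha/2}\|\phi\|_{C^\alpha(\p\Omega)}$; on the interior portion $\p V_h\cap\Omega$, a Caffarelli--Guti\'{e}rrez-type weak Harnack inequality on affinely normalized sections combined with a dyadic iteration $h\to h/2$ transfers the boundary oscillation inward at a geometric rate, yielding the desired H\"{o}lder modulus at $z$.

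The globalization step is routine: for $x,y\in\overline{\Omega}$, either both points lie in an interior section of $u$ compactly contained in $\Omega$, to which Theorem \ref{thm: int} applies with height comparable to a power of $|x-y|$; or both lie in a common boundary section $V_h$ of height a power of $|x-y|$, to which the boundary estimate applies. Taking $\beta=\min(\beta_{\mathrm{int}},\beta_1)$ completes the proof. The main obstacle is the boundary estimate, and within it the most delicate point is producing a genuine positive H\"{o}lder exponent $\beta_1$ out of the dyadic iteration: this requires the critical Lorentz space bound \eqref{glb_wLp} --- no cruder $L^p$ estimate suffices at the eccentricity of boundary sections --- together with the quantitative tangential geometry of \cite{S1}, which is precisely what converts the Euclidean $C^\alpha$ regularity of $\phi$ into an affine-invariant oscillation bound inside $V_h$, and the stability of the decomposition $v=v_1+v_2$ under affine rescalings of boundary sections.
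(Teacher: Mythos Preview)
Your outline follows the same large-scale architecture as the paper (interior estimate via Theorem \ref{thm: int}, a boundary estimate, and gluing), and the decomposition $v=v_1+v_2$ on $V_h=S_u(z,h)\cap\Omega$ with $v_2$ controlled by the Green's function estimates of Theorem \ref{Green_gl_thm} is correct and mirrors what the paper does implicitly through Proposition \ref{thm: glb_mp}. However, there is a genuine gap in your treatment of $v_1$.

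You propose to obtain oscillation decay for the $L_u$-harmonic function $v_1$ on $V_{h/2}\subset V_h$ via ``a Caffarelli--Guti\'errez-type weak Harnack inequality on affinely normalized sections combined with a dyadic iteration.'' But the Caffarelli--Guti\'errez inequality is an \emph{interior} result: it requires nonnegative (super)solutions on a full section $S_u(y,2h)\Subset\Omega$. Here $V_h$ is a half-section touching $\partial\Omega$, and the nonnegative function $M_h-v_1$ (with $M_h=\sup_{V_h}v_1$) lives only on $V_h$; no extension across $\partial\Omega$ preserves the supersolution property for $L_u$. Without a genuine boundary version of the Harnack inequality or a barrier, the iteration yields only $\omega(h/2)\le\omega(h)+Ch^{\beta_1}$, with no contraction factor $\theta<1$, and no H\"older exponent emerges. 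The paper supplies this missing ingredient not by Harnack at all but by an explicit barrier $w_{\delta_2}$ (a supersolution of $L_u$ vanishing at the boundary point, constructed in \cite[Lemma 13.7]{Le24}) together with the maximum principle of Proposition \ref{thm: glb_mp}(ii); this gives directly the pointwise estimate $|v(x)-v(z)|\le C|x-z|^{\alpha_1}$ of Proposition \ref{prop: bd_int_est}. The paper then combines that pointwise boundary estimate with Theorem \ref{thm: int} applied on the \emph{maximal interior section} $S_u(y,\bar h(y))$ (whose geometry is controlled by Proposition \ref{prop: max_sec}) to propagate H\"older continuity up to the boundary. An alternative route would be to invoke the boundary H\"older estimates for $L_u$-harmonic functions from \cite{LS1}, but that is itself a substantial barrier/localization argument, not a consequence of the interior weak Harnack you cite.

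A secondary point: your assertion that ``no cruder $L^p$ estimate suffices'' and that the critical Lorentz bound \eqref{glb_wLp} is essential here is not correct. Since $q>n$, the strong $L^p$ estimate \eqref{glb_D_Lp} with $p=q/(q-1)<n/(n-1)$ already controls the $\F$ contribution with the gain $h^{(q-n)/(2q)}$; this is exactly what Proposition \ref{thm: glb_mp}(ii) uses. The endpoint Lorentz bound \eqref{glb_wLp} is only needed when $q=n$, which is not the regime of this theorem.
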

As an application of Theorem \ref{glb_holder}, we obtain in Section \ref{sec_Ab} the solvability of the second boundary value problem for singular fourth-order Abreu type equations, some of which
 in dimensions at least three are not accessible by previous approaches. 

\subsection{Comments} We briefly comment on our results, assumptions, and methods of the proofs.

\begin{rem} Some remarks on the results are in order.
\begin{enumerate}
\item Under assumption \eqref{MAu}, when $y= x_0$
 and the integral over the whole section $S_u(x_0, h)$ in inequality \eqref{int_Dg_2} is replaced by one over $S_u(x_0, h/2)$,  
 Maldonado \cite[Theorem 1.1]{Mal17} 
 established the following estimate  by a different method:
 \[\int_{S_u(x_0, h/2)} |(D^2 u(x))^{-1/2} D_x g_{S_u(x_0, h)}(x, x_0)|^p \; dx \leq C(n, \lambda, \Lambda, p)h^{\frac{n}{2} - \frac{n-1}{2}p}.    \]

\item When $n \geq 3$, the exponent $\frac{n}{2} - \frac{n-1}{2}p$ in \eqref{int_Dg_2} comes from applying \eqref{int_Dg_1} and the H\"{o}lder inequality in Lorentz spaces together with the volume estimates for sections. For the same reason, the exponent
$\frac{q-n}{2q}$ in \eqref{int_H_Lp} comes from \eqref{int_H_wLp}.
\item 
It would be interesting to remove the logarithmic terms in Theorems \ref{Green_int_thm} and \ref{Green_gl_thm} in dimension two.

\item    By applying Theorem \ref{Green_gl_thm} and the boundary Harnack inequality for the linearized Monge--Amp\`ere equation \cite{Le_TAMS}, we can obtain a boundary analogue of Theorem \ref{int_Har}.
		\end{enumerate}
\end{rem}
\begin{rem} 
\label{Fmurem}
We comment briefly on some assumptions.
\begin{enumerate}
\item In Theorems \ref{int_Har} and \ref{glb_holder}, we assume the vector field $\bf{F}$ to be in $W^{1, n}$ and the measure $\mu$ to be an $L^n$ function
	only to use the representation formula \eqref{G_rep} involving the Green's function, $\diver \F$, and $\mu$. However, our estimates will not depend on this regularity of $\F$ and $\mu$. In particular, they do not depend on
	$\|\mu\|_{L^n}$ in Theorem \ref{glb_holder}. Note that
	\[|\mu|(S_u(z, s)) \leq \|\mu\|_{L^n(\Omega)}|S_u(z, s)|^{\frac{n-1}{n}} \leq C(n,\lambda) \|\mu\|_{L^n(\Omega)} s^{\frac{n-1}{2}}. \]
\item	As will be seen in applications (see Lemma \ref{mu_bound} and Theorem \ref{Abthm}), when $\mu=\diver\F_0$ for a vector field $\F_0\in W^{1, n}(\Omega;\R^n)\cap L^\infty(\Omega;\R^n)$ such that $\mu$ has a definite sign,
	$M_0$ will be chosen to depend on $\|\F_0\|_{L^\infty(\Omega)}$ rather than on $\|\diver \F_0\|_{L^n(\Omega)}$. This is the case of $\F_0$ being the gradient of a convex or concave function.
 We do not pursue the issue of finding  optimal regularity conditions on $\F$ and $\mu$ in this paper.
	\end{enumerate}
\end{rem}
\begin{rem} We comment briefly on the methods of the proofs. 
\begin{enumerate}
\item The key idea in the proofs of Theorems \ref{Green_int_thm} and \ref{Green_gl_thm} is to 
estimate the distribution function for the Monge--Amp\`ere gradient $(D^2u)^{-1/2}Dg$ of the Green's function $g$ of the linearized Monge--Amp\`{e}re operator  via estimates of the distribution function of 
$g$ (see Proposition \ref{keyprop}) and then optimize. This allows us to take advantage of results for the distribution function of $g$ that have been developed so far.  Though simple, this idea can be applicable to more general elliptic operators in divergence form.
\item 
In the proofs of Theorems \ref{int_Har} and \ref{glb_holder}, we follow the framework in \cite[Chapter 15]{Le24} for linearized Monge--Amp\`{e}re equations in divergence form
and use the Caffarelli--Guti\'{e}rrez Harnack inequality,  Savin's boundary localization theorem, and maximum principles (such as Propositions \ref{intmax} and \ref{thm: glb_mp}) to estimate solutions with prescribed boundary values.
\item In the course of proving the maximum principles such as \eqref{int_ubd1}, instead of working with $D_x g_{S_h}(\cdot, y) \cdot \F$, we work with 
$(D^2 u)^{-1/2} D_x g_{S_h}(\cdot, y) \cdot (D^2 u)^{1/2} \F$. The integrability of the Monge--Amp\`ere gradient of the Green's function in 
Theorems \ref{Green_int_thm} and \ref{Green_gl_thm} explains the assumed integrability of $(D^2 u)^{1/2} \F$ and this seems to be optimal without further restrictions. On the other hand, when $\mu$ is a Radon measure with certain measure growth of the total variation $|\mu|$ on sections, the layer-cake formula can be used to obtain uniform estimates; see \eqref{int_mu3}. For this, our refined control on the shape of the superlevel sets of the Green's function proves to be crucial. 
	\end{enumerate}
\end{rem}

\section{Preliminaries and a key estimate}\label{sec: pre}
In this section, we recall some properties of sections of solutions to the Monge--Amp\`{e}re equation and the Green's functions of the linearized Monge--Amp\`{e}re operator and 
state a key estimate for the Monge--Amp\`ere gradient of the Green's function.

\medskip
We will frequently use the following volume estimates; see \cite[Lemmas 5.6 and 5.8]{Le24}. 
\begin{lem}[Volume estimates for sections]\label{vol_est}
Let $u\in C^2(\Omega)$ be a convex function satisfying \eqref{MAu} on a bounded convex domain $\Omega \subset \R^n$.  Let $x\in\Omega$ and $h>0$.  Then 
\[|S_u(x, h)| \leq C(n) \lambda^{-1 / 2} h^{n / 2}.\]
If  $S_u(x, h)\Subset \Omega$, then 
\[
	c(n) \Lambda^{-1 / 2} h^{n / 2} \leq |S_u(x, h)|,
\]
where $c, C$ are positive constants depending only on $n$.
\end{lem}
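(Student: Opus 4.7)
The plan is to reduce both estimates to a normalized setting using John's lemma and then apply the Alexandrov--Bakelman--Pucci (ABP) maximum principle, complemented by a subgradient-polar containment for the upper bound. After replacing $u(\cdot)$ by $u(\cdot) - u(x) - Du(x)\cdot(\cdot - x)$, we may assume $u(x)=0$, $Du(x)=0$, so that $S_u(x,h) = \{u<h\}$ with $u \ge 0$ in $S_u(x,h)$. John's lemma furnishes $T \in GL(n)$ and $z_0 \in \R^n$ such that $B_1 \subset \tilde S \subset B_n$, where $\tilde S := T^{-1}(S_u(x,h) - z_0)$. Setting $\tilde u(y) := u(Ty + z_0)$, we obtain a convex function on $\tilde S$ with $\tilde u = h$ on $\partial \tilde S$, attaining its minimum $0$ with zero gradient at the interior point $\tilde x := T^{-1}(x - z_0)$, and satisfying $\lambda(\det T)^2 \le \det D^2 \tilde u \le \Lambda(\det T)^2$. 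Since $|S_u(x,h)| = |\det T|\cdot|\tilde S|$ with $\omega_n \le |\tilde S| \le n^n \omega_n$, both inequalities reduce to two-sided bounds on $|\det T|$.

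For the lower bound (which uses $S_u(x,h)\Subset\Omega$ so that $\tilde u$ is $C^2$ up to $\partial \tilde S$), I would apply ABP to $w := \tilde u - h$, which vanishes on $\partial \tilde S$ and equals $-h$ at $\tilde x$. Using $\diam(\tilde S)\le 2n$ and $\dist(\tilde x, \partial\tilde S)\le n$, ABP yields
\[
h^n \le C_n \int_{\tilde S}\det D^2 \tilde u \le C_n \Lambda(\det T)^2 |\tilde S| \le C_n' \Lambda(\det T)^2,
\]
whence $|\det T|\ge c(n)\Lambda^{-1/2}h^{n/2}$ and the lower bound on $|S_u(x,h)|$ follows. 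For the upper bound, the subdifferential inequality $\tilde u(z)\ge \tilde u(y) + p\cdot(z-y)$ applied with $p := D\tilde u(y)$, $y \in \tilde S^\circ$, first at $z = \tilde x$ (giving $p\cdot(y-\tilde x)\ge \tilde u(y)\ge 0$) and then at $z \in \partial \tilde S$ (giving $p \cdot z \le h - \tilde u(y) + p\cdot y$), combine to force $p\cdot(z-\tilde x)\le h$ for all $z \in \tilde S$. Hence
\[
D\tilde u(\tilde S^\circ) \subset h(\tilde S - \tilde x)^\circ, \qquad \int_{\tilde S}\det D^2 \tilde u = |D\tilde u(\tilde S^\circ)| \le h^n |(\tilde S - \tilde x)^\circ|.
\]
Together with $\int_{\tilde S}\det D^2 \tilde u \ge \lambda(\det T)^2 |\tilde S|$ and a dimensional polar volume estimate $|\tilde S|\cdot|(\tilde S-\tilde x)^\circ|\le C_n$ of Blaschke--Santal\'o type, this yields $(\det T)^2 \le C_n h^n/\lambda$, hence $|S_u(x,h)|\le C(n)\lambda^{-1/2}h^{n/2}$.

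The principal technical difficulty lies in the polar volume inequality $|\tilde S|\cdot|(\tilde S-\tilde x)^\circ|\le C_n$: the minimizer $\tilde x$ of $\tilde u$ need not coincide with, or even lie uniformly near, the centroid $z^*$ of $\tilde S$, so the sharp Blaschke--Santal\'o inequality does not apply at $\tilde x$ directly. To address this, I would first exhaust $S_u(x,h)$ by its strictly compactly contained subsections $S_u(x,h')\Subset\Omega$, $h'<h$, to which the Caffarelli--Guti\'errez quasi-centering of the minimum point under $\lambda\le \det D^2 u\le \Lambda$ applies, producing $B_r(\tilde x)\subset \tilde S$ for some $r=r(n)>0$ after renormalization, and then pass to the limit $h'\to h$; the polar bound with a purely dimensional constant then follows from Blaschke--Santal\'o at $z^*$ combined with a shift argument comparing $(\tilde S - \tilde x)^\circ$ to $(\tilde S - z^*)^\circ$ using $B_1\subset \tilde S\subset B_n$.
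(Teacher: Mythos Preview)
Your lower-bound argument via the ABP estimate is correct and is essentially the standard one.

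The upper-bound argument, however, contains a genuine error. The inclusion $D\tilde u(\tilde S^\circ)\subset h(\tilde S-\tilde x)^\circ$ is false: in dimension one take $\tilde S=(-1,1)$, $\tilde u(y)=y^2$, so $\tilde x=0$, $h=1$, $D\tilde u(\tilde S)=(-2,2)$, while $h(\tilde S-\tilde x)^\circ=[-1,1]$. The flaw is in the ``combination'' step: from $p\cdot(y-\tilde x)\ge \tilde u(y)$ and $p\cdot(z-y)\le h-\tilde u(y)$ you get
\[
p\cdot(z-\tilde x)=p\cdot(z-y)+p\cdot(y-\tilde x)\le\bigl(h-\tilde u(y)\bigr)+p\cdot(y-\tilde x),
\]
and the right-hand side is \emph{at least} $h$ (by the first inequality), not at most $h$; the two inequalities point in incompatible directions for the conclusion you want. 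Consequently the polar-volume route collapses before one ever reaches the centering difficulty. Even if the inclusion were patched with a constant factor, your downstream fixes carry further problems: the centering of the minimum point under $\lambda\le\det D^2u\le\Lambda$ yields $r=r(n,\Lambda/\lambda)$, not $r(n)$, so you would only recover $C(n,\Lambda/\lambda)\lambda^{-1/2}h^{n/2}$ rather than the stated $C(n)\lambda^{-1/2}h^{n/2}$; and the exhaustion by $S_u(x,h')\Subset\Omega$ fails when $S_u(x,h)$ already meets $\partial\Omega$, since subsections with $h'$ close to $h$ need not be compactly contained either.

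The paper does not prove this lemma but cites \cite[Lemmas~5.6 and~5.8]{Le24}. The argument there bypasses polar bodies entirely: after John normalization $B_1(z_0)\subset\tilde S\subset B_n(z_0)$, compare $\tilde u-h$ with the paraboloid $\psi(y)=\tfrac{1}{2}\bigl(\lambda(\det T)^2\bigr)^{1/n}\bigl(|y-z_0|^2-1\bigr)$ on $B_1(z_0)$. Since $\tilde u-h\le 0=\psi$ on $\partial B_1(z_0)$ and $\det D^2(\tilde u-h)\ge\det D^2\psi$, the comparison principle gives $\tilde u-h\le\psi$ on $B_1(z_0)$; evaluating at $z_0$ and using $\tilde u(z_0)\ge 0$ yields $(\det T)^2\le(2h)^n/\lambda$. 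This produces the upper bound with a purely dimensional constant and needs neither compact containment nor any information about the location of $\tilde x$.
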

Note that if $S_u(x_0, h)\Subset\Omega$, then the convexity of $u$ implies that $S_u(x_0, h)$ is a bounded convex domain and $u(x)= u(x_0) + Du(x_0)\cdot (x-x_0) + h$ on $\p S_u(x_0, h)$.

\medskip
We also need the following geometric property of sections; see \cite[Theorem 5.30]{Le24}.

\begin{thm}[Inclusion property of interior sections]
\label{thm: inclusion}
Let $u\in C^2(\Omega)$ be a convex function satisfying \eqref{MAu}. Then, there exist constants $c_0(n, \lambda, \Lambda)>0$ and $p_1(n, \lambda, \Lambda) \geq 1$ with the following property. Assume $S_u\left(x_0, 2 t\right) \Subset \Omega$ and $0<r<s \leq 1$.
 If $x_1 \in S_u\left(x_0, r t\right)$, then
        \[
            S_u\left(x_1, c_0(s-r)^{p_1} t\right) \subset S_u\left(x_0, s t\right).
        \]
\end{thm}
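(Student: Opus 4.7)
The statement is a quantitative, polynomial-rate version of the engulfing property for Monge--Amp\`ere sections: a small section anchored near the ``center'' of a larger section remains inside a slightly enlarged section. The approach I would take is affine normalization combined with universal shape control for sections of normalized Monge--Amp\`ere potentials.

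\emph{Normalization.} Replace $u$ by $\bar u(x) := u(x) - u(x_0) - Du(x_0)\cdot(x-x_0)$, so that $S_u(x_0, h) = \{\bar u < h\}$ with $\bar u(x_0) = 0$ and $D\bar u(x_0) = 0$. By John's lemma applied to the bounded convex set $\{\bar u < 2t\}$, there exists an affine map $T$ with $T x_0 = 0$ and $B_{c(n)} \subset T(\{\bar u < 2t\}) \subset B_1$. Define $\tilde u(y) := (2t)^{-1}\bar u(T^{-1}y)$ on $\tilde\Omega := T(\{\bar u < 2t\})$. The volume estimate $|S_u(x_0, 2t)| \approx t^{n/2}$ from Lemma \ref{vol_est} pins down $|\det T|$, so that $\tilde\lambda \leq \det D^2 \tilde u \leq \tilde\Lambda$ on $\tilde\Omega$ with $\tilde\lambda, \tilde\Lambda$ depending only on $n, \lambda, \Lambda$; moreover $\tilde u(0) = 0$, $\tilde u = 1$ on $\partial \tilde\Omega$, and sections transform equivariantly: $T(S_u(x_0, \sigma t)) = \{\tilde u < \sigma/2\}$ and $T(S_u(x, \eta)) = \tilde S_{\tilde u}(Tx, \eta/(2t))$. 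The theorem then reduces to finding universal constants $\tilde c_0, p_1 > 0$ such that
\[
    \tilde S_{\tilde u}(y_1, \tilde c_0 (s-r)^{p_1}) \subset \{\tilde u < s/2\} \quad \text{whenever } y_1 \in \{\tilde u < r/2\},\ 0 < r < s \leq 1.
\]

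\emph{Interior shape control and conclusion.} On the compactly contained set $\{\tilde u < 3/4\} \Subset \tilde\Omega$ (compact containment following from Caffarelli's strict convexity applied to the normalized potential), $\tilde u$ enjoys universal interior $C^{1,\alpha}$ bounds together with a universal polynomial shape bound on interior sections: for all $y \in \{\tilde u < 1/2\}$ and $\eta > 0$ small,
\[
    \tilde S_{\tilde u}(y, \eta) \subset B(y, C\eta^{\gamma}), \qquad \sup_{\{\tilde u < 3/4\}} |D\tilde u| \leq C,
\]
where $C > 0$ and $\gamma \in (0,1]$ depend only on $n, \lambda, \Lambda$. Given these, for any $y_1 \in \{\tilde u < r/2\}$ and $z \in \tilde S_{\tilde u}(y_1, \eta)$,
\[
    \tilde u(z) < \tilde u(y_1) + D\tilde u(y_1)\cdot(z - y_1) + \eta \leq \tfrac{r}{2} + C\,|z - y_1| + \eta \leq \tfrac{r}{2} + C^2 \eta^{\gamma} + \eta.
\]
Setting $p_1 := 1/\gamma$ and choosing $\tilde c_0$ small enough so that $C^2 \eta^{\gamma} + \eta < (s-r)/2$ whenever $\eta = \tilde c_0 (s-r)^{p_1}$ (using $(s-r) \leq 1$) yields $\tilde u(z) < s/2$, proving the reduced claim and hence the theorem after reversing the normalization.

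\emph{Main obstacle.} The linchpin is the polynomial shape bound $\tilde S_{\tilde u}(y, \eta) \subset B(y, C\eta^{\gamma})$ for $y$ in the interior of a normalized section. Caffarelli's qualitative strict convexity alone is insufficient: one must produce a universal Hölder exponent $\gamma = \gamma(n, \lambda, \Lambda)$. This is extracted by applying John's lemma to each section $\tilde S_{\tilde u}(y, \eta)$ and bounding the eccentricity of the resulting normalizing affine map via the two-sided Monge--Amp\`ere bound $\tilde\lambda \leq \det D^2 \tilde u \leq \tilde\Lambda$ together with universal upper and lower control of $\tilde u$ near $y$. This quantitative shape control for interior sections is precisely the engulfing-and-inclusion theory of Caffarelli--Guti\'errez as developed in \cite[Chapter 5]{Le24}, from which the present theorem is drawn.
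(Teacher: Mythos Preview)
The paper does not prove this theorem; it is quoted as a preliminary fact with a direct citation to \cite[Theorem 5.30]{Le24}. Your sketch is a reasonable outline of the standard argument behind that result---affine normalization of $S_u(x_0,2t)$ via John's lemma, reduction to a universal Monge--Amp\`ere potential on a unit-scale domain, and extraction of a polynomial section-shape bound from interior $C^{1,\alpha}$ regularity and quantitative strict convexity---and you correctly identify the source at the end. One minor point worth tightening: in the displayed chain you tacitly use both the gradient bound at $y_1$ and the shape bound on $\tilde S_{\tilde u}(y_1,\eta)$, which requires that this section already lies in the region $\{\tilde u<3/4\}$ where those estimates hold; this is harmless since $\eta\le\tilde c_0$ can be taken universally small, but it should be said explicitly. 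With that caveat, your approach matches the cited reference.
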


We collect here some facts on the Green's function of the linearized Monge--Amp\`{e}re operator; see \cite[Chapter 14]{Le24} for more details.

\begin{rem}\label{rem: G_prop}
	We will use the following properties of the Green's functions in the setting of Definition \ref{Gdefn}.
	\begin{enumerate}
		\item By approximation arguments, we can use $\psi \in W_0^{1,2}(V) \cap C(\overline{V})$ as test functions to \eqref{Gdef}.
		\item (Representation formula) Assume that $V$ is proper. 
		If $\varphi \in L^n(V)$, then there exists a unique solution $\psi \in W_{\text{loc}}^{2, n}(V) \cap W_0^{1,2}(V) \cap C(\overline{V})$ to
		 	\[
		 		-U^{ij} D_{ij} \psi = \varphi \quad\text{ in } V \quad \text{ and } \psi = 0 \quad\text{ on } \partial V. 
		 	\]
			Use $\psi$ as a test function to \eqref{Gdef}, then the following holds:
			\begin{equation}\label{G_rep}
				\psi(y)=\int_V g_V(\cdot, y) \varphi \; dx.
			\end{equation}	
	\end{enumerate}
\end{rem}

We also need the following regularity of the Green's function away from the pole.

\begin{prop}\label{prop: G_reg}
Let $u$, $\Omega$, and $V$ satisfy one of the following sets of conditions:
\begin{enumerate}
	\item $\Omega$ is bounded convex domain in $\R^n$, $u \in C^3(\Omega)$ is a convex function satisfying \eqref{MAu}, and $V\Subset \Omega$ is open. 
	\item  $u$ and $\Omega$ satisfy \eqref{glb_1}--\eqref{glb_4}, and $V \subset  \Omega$ is open. 
\end{enumerate}
Let $g_V(\cdot, x_0)$ be the Green's function of $L_u=D_i\left(U^{i j} D_j\right)$ in $V$ with pole $x_0 \in V$.
\begin{enumerate}
	\item[$\bullet$] If $E \Subset V \backslash\left\{x_0\right\}$, then
$g_V(\cdot, x_0) \in W_{\mathrm{loc}}^{2, n}(E)\cap W^{1,2}(E) \cap C(\overline{E})$ and $L_u g_V(\cdot, x_0)=0$ 
in $E$.
	\item[$\bullet$] Assume $V$ is proper. Then $g_V(\cdot, x_0) \in C\left(\overline{V} \setminus B_r\left(x_0\right)\right)$ for all $r>0$.
\end{enumerate}
\end{prop}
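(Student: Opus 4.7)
\textbf{Proof plan for Proposition \ref{prop: G_reg}.}

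\emph{Interior regularity (first bullet).} My plan is to reduce to classical uniformly elliptic theory by identifying $L_u$ as a uniformly elliptic operator with smooth coefficients away from the pole and away from $\partial\Omega$. Since $E\Subset V\setminus\{x_0\}$, $V\subset \Omega$, and $\Omega$ is open, one has $\overline{E}\Subset \Omega$. The assumption $u\in C^3(\Omega)$ forces $\|D^2u\|_{L^\infty(\overline{E})}\leq C_E<\infty$, and together with $\det D^2u\geq \lambda$ this pins the eigenvalues of $D^2u$ between two positive constants on $\overline{E}$. Consequently $U=(\det D^2u)(D^2u)^{-1}$ is uniformly positive definite and of class $C^1$ on a neighborhood of $\overline{E}$, so $L_u$ is a uniformly elliptic divergence-form operator with $C^1$ coefficients there. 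By Definition \ref{Gdefn}(a), $g:=g_V(\cdot,x_0)\in W^{1,2}(V\setminus B_r(x_0))$ for every $r>0$; choosing $r$ so that $B_r(x_0)\cap \overline{E}=\emptyset$ yields the $W^{1,2}(E)$ statement. Testing \eqref{Gdef} against $\psi\in C_c^\infty(V\setminus\{x_0\})$ shows that $g$ is a distributional solution of $L_ug=0$ on $V\setminus\{x_0\}$, and standard interior elliptic regularity (De Giorgi--Nash--Moser plus Schauder) upgrades $g$ to $C^{2,\alpha}_{\mathrm{loc}}(V\setminus\{x_0\})$. Restricting to $E$ gives $g\in W^{2,n}_{\mathrm{loc}}(E)\cap C(\overline{E})$, and the equation $L_ug=0$ then holds pointwise almost everywhere on $E$.

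\emph{Boundary continuity (second bullet).} Now assume $V$ is proper and fix $r>0$; the remaining task is to show $g(x)\to 0$ as $x\to y\in \partial V$, with $x$ kept in $V\setminus \overline{B_r(x_0)}$. The plan is to combine the exterior cone condition at $y$ with $L_u$-barrier arguments applied to the bounded, nonnegative $L_u$-harmonic function $g|_{V\setminus \overline{B_{r/2}(x_0)}}$. In setting (1), where $V\Subset \Omega$, the operator $L_u$ is uniformly elliptic with $L^\infty$ coefficients on a neighborhood of $\overline{V}$, so continuity at an exterior-cone point follows from standard divergence-form potential theory (for example, the Littman--Stampacchia--Weinberger theorem on Green's functions, or a De Giorgi--Nash oscillation decay combined with a cone barrier). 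In setting (2), where $V$ may touch $\partial \Omega$ and the eigenvalues of $U$ may blow up or vanish near $\partial \Omega$, the plan is instead to invoke Savin's boundary localization theorem to normalize boundary sections of $u$ to unit shape, and then to use the affine invariant boundary H\"older estimates developed in \cite{LN3,Le_TAMS} to deduce continuity up to $\partial V$ from the interior theory after rescaling.

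\emph{Main obstacle.} The delicate step is the boundary continuity in setting (2): since $L_u$ genuinely degenerates at $\partial \Omega$, classical uniformly elliptic barriers are not available, and one must couple the exterior cone condition on $V$ with the Monge--Amp\`ere-specific boundary normalization provided by Savin's theorem, together with the affine invariant boundary estimates, in order to push continuity all the way up to $\partial V$.
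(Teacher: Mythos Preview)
The paper does not prove this proposition; it is stated as background material with a reference to \cite[Chapter 14]{Le24}, so there is no proof in the paper to compare against. Your outline is correct and aligns with the standard treatment in that reference: the first bullet is exactly classical interior regularity for a uniformly elliptic operator with $C^1$ coefficients (since $E\Subset\Omega$ and $u\in C^3(\Omega)$), and the second bullet in setting (1) is the classical exterior-cone barrier argument for divergence-form operators. Your identification of the obstacle in setting (2) and the proposed route through Savin's boundary localization together with the affine-invariant boundary H\"older theory of \cite{LN3,Le_TAMS,LS1} is correct and is indeed the mechanism used in \cite{Le24}; one small addition worth making explicit is that continuity at points of $\partial V\cap\partial\Omega$ is most cleanly obtained by constructing $g_V$ as a limit of solutions $g_m$ to $-L_u g_m=\varphi_m$, $g_m|_{\partial V}=0$, with smooth $\varphi_m\to\delta_{x_0}$ supported near $x_0$, and then passing the uniform (in $m$) boundary H\"older estimates for the $g_m$ to the limit.
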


\medskip

We state a global H\"older gradient estimate for the Monge--Amp\`ere equation and verify \eqref{muglthm} under natural conditions that are suitable for applications.
\begin{lem} 
\label{mu_bound}
Assume that $u$ and $\Omega$ satisfy \eqref{glb_1}--\eqref{glb_4}. There exists $\alpha=\alpha(n,\lambda,\Lambda)\in (0, 1)$ such that the following statements hold.
	\begin{enumerate}
	\item[(i)] There exists $C_\ast=C_\ast(n, \lambda, \Lambda, \rho)>0$ such that 
	 \begin{equation}
\label{Duglb}
[Du]_{C^{\alpha}(\overline\Omega)}:=\sup_{x\neq y\in \overline\Omega}|Du(x)-Du(y)|/|x-y|^{\alpha}\leq C_\ast.
\end{equation}
\item[(ii)] Let $S_u(x_{0}, t_{0})$ be a section of $u$ with $x_0\in \overline{\Omega}$ and $t_{0}>0$. Then
\begin{equation}
\label{Hausdorffest}
\mathcal{H}^{n-1} (\p S_u(x_{0}, t_{0}))\leq C(n, \lambda, \Lambda,  \rho) t_0^{\frac{n-2}{2} + \frac{
\alpha}{\alpha +1}}.\end{equation}
Consequently, if $\F \in W^{1, n}(\Omega; \R^n) \cap L^{\infty}(\Omega; \R^n)$ and $\mu_{ \F}:=\diver \F$ has a definite sign, then
\begin{equation}
\label{muFb}
|\mu_{ \F}|(S_u(x_0, t_0)) \leq C(n, \lambda, \Lambda,  \rho) \|\F\|_{L^{\infty} (S_u(x_0, t_0))} t_0^{\frac{n-2}{2} + \frac{
\alpha}{\alpha +1}}.\end{equation}
	\end{enumerate}
\end{lem}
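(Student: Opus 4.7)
The plan is to derive (i) by combining interior and boundary $C^{1,\alpha}$ regularity for the Monge--Amp\`ere equation, and then to prove (ii) via a convex-geometric inradius--to--surface-area inequality fed by the $C^{1,\alpha}$ bound from (i). For (i), interior $C^{1,\alpha}$ is Caffarelli's classical regularity result for strictly convex Aleksandrov solutions of $\det D^2u = f$ with $0<\lambda\leq f\leq \Lambda$, and boundary $C^{1,\alpha}$ under \eqref{glb_1}--\eqref{glb_4} follows from Savin's boundary localization theorem \cite{S1} together with the boundary regularity theory collected in \cite[Chapter 11]{Le24}. The combination produces a universal exponent $\alpha=\alpha(n,\lambda,\Lambda)\in(0,1)$ and the global bound \eqref{Duglb}.

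For (ii), fix $x_0\in\overline\Omega$ and $t_0>0$, and set $\ell(x):=u(x_0)+Du(x_0)\cdot(x-x_0)$ and $w(x):=u(x)-\ell(x)$. Convexity of $u$ forces $w\geq 0$ on $\overline\Omega$, and integrating the bound $|Du(z)-Du(x_0)|\leq C_\ast|z-x_0|^{\alpha}$ from (i) along the segment $[x_0,x]\subset\overline\Omega$ (which lies in $\overline\Omega$ by convexity) yields the pointwise estimate
\[
w(x)\leq \frac{C_\ast}{\alpha+1}|x-x_0|^{\alpha+1}\qquad\text{for all } x\in\overline\Omega.
\]
The crux is then to produce a point $\tilde x_0\in\Omega$ and universal constants $c_1,c_2>0$ depending on $n,\lambda,\Lambda,\rho$ such that $|\tilde x_0-x_0|\leq c_1 t_0^{1/(\alpha+1)}$ and $B(\tilde x_0, c_2 t_0^{1/(\alpha+1)})\subset S_u(x_0,t_0)$. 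If $\dist(x_0,\partial\Omega)\geq c_1 t_0^{1/(\alpha+1)}$, I take $\tilde x_0=x_0$; otherwise I pick $z\in\partial\Omega$ realizing the distance to $x_0$, invoke the interior-ball condition from \eqref{glb_1}--\eqref{glb_4} to obtain $B_\rho(z_0)\subset\Omega$ with $z\in\partial B_\rho(z_0)$, and set $\tilde x_0:=z+c_1 t_0^{1/(\alpha+1)}\nu_{\mathrm{in}}(z)$, with $\nu_{\mathrm{in}}(z)$ the inner unit normal at $z$. A triangle-inequality computation combined with the pointwise bound on $w$ above yields the desired containment provided the constants are chosen sufficiently small; sections of height exceeding a universal threshold are absorbed by $|\Omega|\leq C(\rho)$.

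Given such a ball $B(\tilde x_0, c_2 t_0^{1/(\alpha+1)})\subset S_u(x_0,t_0)$, I apply the elementary convex-geometric inequality $\mathcal{H}^{n-1}(\partial K)\leq n|K|/r$, valid for any convex body $K$ containing a ball of radius $r$ about some interior point: it follows from the divergence-theorem identity $|K|=\frac{1}{n}\int_{\partial K}(x-\tilde x_0)\cdot\nu(x)\,d\mathcal{H}^{n-1}(x)$ together with the supporting-hyperplane bound $(x-\tilde x_0)\cdot\nu(x)\geq r$ on $\partial K$. Combined with the volume upper bound $|S_u(x_0,t_0)|\leq C t_0^{n/2}$ from Lemma \ref{vol_est}, this gives
\[
\mathcal{H}^{n-1}(\partial S_u(x_0,t_0))\leq \frac{n|S_u(x_0,t_0)|}{c_2 t_0^{1/(\alpha+1)}}\leq C\,t_0^{\frac{n-2}{2}+\frac{\alpha}{\alpha+1}},
\]
which is \eqref{Hausdorffest}. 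The final bound \eqref{muFb} then follows from the divergence theorem applied on the convex (hence Lipschitz) bounded domain $S_u(x_0,t_0)$: since $\mu=\diver\F$ has a definite sign,
\[
|\mu|(S_u(x_0,t_0))=\Bigl|\int_{\partial S_u(x_0,t_0)}\F\cdot\nu\,d\mathcal{H}^{n-1}\Bigr|\leq \|\F\|_{L^{\infty}(S_u(x_0,t_0))}\,\mathcal{H}^{n-1}(\partial S_u(x_0,t_0)),
\]
and \eqref{Hausdorffest} closes the argument.

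The main obstacle I anticipate is the geometric step of locating the interior ball of radius $\sim t_0^{1/(\alpha+1)}$ in $S_u(x_0,t_0)$ uniformly in $x_0\in\overline\Omega$, in particular when $x_0$ lies on or close to $\partial\Omega$: the interior-ball condition from \eqref{glb_1}--\eqref{glb_4}, the pointwise bound on $w$, and the triangle inequality must be coordinated via a consistent choice of small constants $c_1,c_2$ depending only on $n,\lambda,\Lambda,\rho$. Everything else reduces to classical convex geometry and a one-line application of the divergence theorem.
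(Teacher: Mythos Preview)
Your proof of (i) aligns with the paper's, which simply cites the global $C^{1,\alpha}$ estimate \cite[Theorem 9.5]{Le24}. For (ii) and \eqref{muFb}, the core engine --- the inradius-to-surface-area inequality $\mathcal{H}^{n-1}(\partial K)\le n|K|/r$ (this is Lemma \ref{cwzlem} in the paper) together with the volume bound $|S_u(x_0,t_0)|\le Ct_0^{n/2}$ and the divergence theorem --- is the same in both arguments. The genuine difference is in how sections near $\partial\Omega$ are handled. The paper invokes the dichotomy of sections \cite[Proposition 9.8]{Le24}: either $S_u(x_0,2t_0)\subset\Omega$, in which case it applies the ball inclusion \eqref{Ballha} and Lemma \ref{cwzlem} exactly as you do; or $S_u(x_0,2t_0)\subset S_u(z,\bar c t_0)$ for some $z\in\partial\Omega$, in which case it uses Savin's boundary localization to enclose the boundary section in a Euclidean ball of radius $Ct_0^{1/2}|\log t_0|$ and then applies the monotonicity of surface area for nested convex bodies, bypassing the inradius estimate for the section altogether. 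Your route --- shifting the ball center inward along the inner normal via the interior-ball condition on $\Omega$ and then applying Lemma \ref{cwzlem} uniformly --- is more elementary and self-contained, avoiding both the dichotomy and the boundary-section containment, at the cost of the constant-tuning $c_1,c_2$ that you correctly flag as the main obstacle; that step does go through as you outline. One small correction: for $t_0$ above the universal threshold you need to bound the surface area rather than the volume, so invoke $\mathcal{H}^{n-1}(\partial S_u(x_0,t_0))\le \mathcal{H}^{n-1}(\partial B_{1/\rho}(0))$ via convex monotonicity (as the paper does in its large-$t_0$ subcase) rather than $|\Omega|\le C(\rho)$.
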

For the proof of Lemma \ref{mu_bound} (ii), we will use the following observation in Cui--Wang--Zhou \cite[Lemma 4.4]{CWZ}:
\begin{lem} 
\label{cwzlem}
Let $X\subset\R^n$ be a bounded convex domain containing an open ball $B_r(x_0)$. Then 
\[\mathcal{H}^{n-1}(\p X) \leq \frac{n|X|}{r}.\]
\end{lem}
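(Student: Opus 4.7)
This is a standard consequence of combining Caffarelli's interior $C^{1,\alpha}$ estimate for solutions to $\lambda\leq\det D^2u\leq\Lambda$ (which produces an exponent $\alpha_{\mathrm{int}}=\alpha_{\mathrm{int}}(n,\lambda,\Lambda)\in(0,1)$) with Savin's boundary $C^{1,\alpha}$ estimate under \eqref{glb_1}--\eqref{glb_4}. The boundary result propagates Hölder regularity of $Du$ up to $\p\Omega$ with a constant depending additionally on $\rho$; taking $\alpha$ to be the smaller of the interior and boundary exponents and patching the two estimates yields \eqref{Duglb}. A textbook reference is \cite[Chapter 10]{Le24}.

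\textbf{Part (ii).} The plan is to reduce \eqref{Hausdorffest} to the existence of a ball $B_{r}(x_1)\subset S_u(x_0,t_0)$ of radius $r\geq c\,t_0^{1/(1+\alpha)}$, and then invoke Lemma \ref{cwzlem} together with the volume bound $|S_u(x_0,t_0)|\leq C(n,\lambda)\,t_0^{n/2}$ from Lemma \ref{vol_est}: these give
\[
\mathcal{H}^{n-1}(\p S_u(x_0,t_0))\leq \frac{n\,|S_u(x_0,t_0)|}{r}\leq C\,t_0^{n/2-1/(1+\alpha)}=C\,t_0^{\frac{n-2}{2}+\frac{\alpha}{\alpha+1}}.
\]
To build the ball, set $w(x):=u(x)-u(x_0)-Du(x_0)\cdot(x-x_0)\geq 0$ and integrate the Hölder bound from (i) along the segment $[x_0,x]$ to obtain
\[
w(x)\leq\frac{C_\ast}{1+\alpha}\,|x-x_0|^{1+\alpha}\qquad\text{for every }x\in\overline{\Omega}.
\]
Hence with $r_0:=((1+\alpha)\,t_0/C_\ast)^{1/(1+\alpha)}$ one has $B_{r_0}(x_0)\cap\overline{\Omega}\subset S_u(x_0,t_0)$. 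If $\dist(x_0,\p\Omega)\geq r_0$, take $x_1=x_0$ and $r=r_0$. Otherwise, let $y_0\in\p\Omega$ be a nearest boundary point to $x_0$ and let $B_\rho(z)\subset\Omega$ with $y_0\in\p B_\rho(z)$ be the interior ball furnished by \eqref{glb_1}--\eqref{glb_4}; set $\nu:=(z-y_0)/\rho$ and $x_1:=y_0+(r_0/2)\nu$. In the regime $r_0\leq\rho/2$, a direct computation—using that $x_0-y_0$ is parallel to $\nu$ by the $C^{1,1}$ regularity of $\p\Omega$ forced by the interior ball condition, together with $\dist(x_0,\p\Omega)<r_0$—shows
\[
B_{r_0/2}(x_1)\subset B_\rho(z)\cap B_{r_0}(x_0)\subset \Omega\cap B_{r_0}(x_0)\subset S_u(x_0,t_0).
\]
The remaining range $r_0>\rho/2$ corresponds to $t_0$ bounded below by a constant depending only on $(n,\lambda,\Lambda,\rho)$; here \eqref{Hausdorffest} is trivial, because $S_u(x_0,t_0)\subset\Omega\subset B_{1/\rho}$ and monotonicity of perimeter for convex bodies give $\mathcal{H}^{n-1}(\p S_u(x_0,t_0))\leq \mathcal{H}^{n-1}(\p B_{1/\rho})\leq C(n,\rho)$, while the right-hand side of \eqref{Hausdorffest} is bounded below in this range.

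Finally, \eqref{muFb} follows from the divergence theorem applied to the convex (hence Lipschitz) section $S_u(x_0,t_0)$: since $\F\in W^{1,n}\cap L^\infty$ admits an $L^1$ trace on $\p S_u(x_0,t_0)$ and $\mu=\diver\F$ has a definite sign,
\[
|\mu|(S_u(x_0,t_0))=\Bigl|\int_{S_u(x_0,t_0)}\diver\F\,dx\Bigr|\leq \|\F\|_{L^\infty(S_u(x_0,t_0))}\,\mathcal{H}^{n-1}(\p S_u(x_0,t_0)),
\]
and \eqref{Hausdorffest} closes the estimate. I expect the main technical obstacle to be the precise placement of the inscribed ball in the boundary regime: it is the \emph{interior} ball condition from \eqref{glb_1}--\eqref{glb_4}, rather than mere uniform convexity of $\Omega$, that guarantees simultaneously $B_{r_0/2}(x_1)\subset\Omega$ and $B_{r_0/2}(x_1)\subset B_{r_0}(x_0)$ with a clean quantitative radius. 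The volume estimate, the application of Lemma \ref{cwzlem}, the divergence-theorem step, and the treatment of large $t_0$ are otherwise routine.
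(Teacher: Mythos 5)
There is a genuine gap, and it is structural: the statement you were asked to prove is Lemma \ref{cwzlem} itself, namely that a bounded convex domain $X$ containing a ball $B_r(x_0)$ satisfies $\mathcal{H}^{n-1}(\p X)\leq n|X|/r$. Your write-up never establishes this inequality; instead it sketches a proof of Lemma \ref{mu_bound}, and at the decisive point you explicitly \emph{invoke} Lemma \ref{cwzlem} as a known tool. As a proof of the present statement the argument is therefore empty (indeed circular): the inequality to be proved is assumed, and everything you actually argue — the inscribed ball of radius $\sim t_0^{1/(1+\alpha)}$ in a section, the interior-ball geometry near $\p\Omega$, the divergence-theorem step for $\mu=\diver\F$ — belongs to the application of the lemma, not to its proof.

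What is actually needed is short. For $\p X$ smooth, convexity together with $B_r(x_0)\subset X$ forces $(y-x_0)\cdot\nu(y)\geq r$ at every $y\in\p X$, where $\nu$ is the outer unit normal (the supporting hyperplane at $y$ cannot meet the open ball), and the divergence theorem applied to the field $x\mapsto x-x_0$ gives
\[
n|X|=\int_X \diver(x-x_0)\,dx=\int_{\p X}(y-x_0)\cdot\nu(y)\,d\mathcal{H}^{n-1}(y)\geq r\,\mathcal{H}^{n-1}(\p X),
\]
which is the claim. For a general bounded convex domain one removes the smoothness assumption by approximation, exactly as the paper does: choose smooth uniformly convex domains $X_m$ with $\overline{X_m}\to\overline{X}$ in the Hausdorff distance, apply the smooth case (the cited \cite[Lemma 4.4]{CWZ}) to each $X_m$, and pass to the limit using convergence of the volumes and of the surface areas of convex bodies. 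If you intend your text as a proof of Lemma \ref{mu_bound}(ii), it is broadly consistent with the paper's argument there, but it does not answer the question posed.
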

In \cite[Lemma 4.4]{CWZ}, the lemma was proved for $\p X$ smooth. However, it is still valid for general bounded convex domain $X$. To see this, let $\{X_m\}_{m=1}^\infty$ be a sequence of uniformly convex domains with $C^{\infty}$ boundaries such that $\overline{X_m}$ converges to $\overline{X}$ in the Hausdorff distance; see \cite[Theorem 2.51]{Le24}.  Then applying \cite[Lemma 4.4]{CWZ} to $X_m$ and letting $m\to\infty$, we obtain the stated estimate.

\begin{proof}[Proof of Lemma \ref{mu_bound}] 
By the global $C^{1,\alpha}$ estimates for $u$ (see \cite[Theorem 9.5]{Le24}), there exist $\alpha(n, \lambda, \Lambda)\in (0, 1)$ and $C_\ast(n, \lambda, \Lambda, \rho)$ such that \eqref{Duglb} holds.

 From the mean value theorem, \eqref{Duglb} easily implies that
\begin{equation}
\label{Ballha}
B_{C_\ast^{-1/(1+\alpha)}h^{1/(1+\alpha)}}(x)\subset S_u(x, h)\quad \text{whenever } S_u(x, h)\Subset\Omega.
\end{equation}
By the dichotomy of sections in \cite[Proposition 9.8]{Le24}, one of the following is true: 
\begin{enumerate}
\item[(a)] $S_u(x_{0}, 2t_{0})\subset \Omega$. 
\item[(b)]  There exist $z\in\partial\Omega$ and  a constant  $\bar{c}(n,  \lambda, \Lambda,  \rho)>1$ 
such that
 $ S_u(x_{0}, 2t_{0})\subset S_u(z, \bar{c}t_{0}).$
 \end{enumerate}

\medskip
\noindent
Suppose (a) is true. 
Then, by Lemma \ref{cwzlem} and the volume estimates in Lemma \ref{vol_est}, we have
\[ \mathcal{H}^{n-1} (\p S_u(x_{0}, t_{0})) \leq \frac{n|S_u(x_0, t_0)|}{C_\ast^{-1/(1+\alpha)}t_0^{1/(1+\alpha)}}\leq \frac{C(n,\lambda) t_0^{n/2}}{C_\ast^{-1/(1+\alpha)}t_0^{1/(1+\alpha)}}=C(n,  \lambda, \Lambda,  \rho) t_0^{\frac{n-2}{2} + \frac{
\alpha}{\alpha +1}}.\]

\medskip
\noindent
Suppose (b) is true.
 If $\bar c t_0<c$ where $c= c(n,\lambda,\Lambda,\rho)>0$ is  small, then,  \cite[(9.11)]{Le24} gives
\[S_u(z, \bar c t_0) \subset B_{Ct_0^{1/2}|\log t_0|}(z).\]
By the monotonicity of the surface measure with respect to inclusion of convex  sets (see \cite[Lemma 2.71]{Le24}), we have 
 \begin{equation*}
  \mathcal{H}^{n-1} (\p S_u(x_{0}, t_{0}))
  \leq  \mathcal{H}^{n-1} (\p B_{Ct_0^{1/2}|\log t_0|}(z)) 
  \leq C(n,  \lambda, \Lambda,  \rho) t_0^{\frac{n-2}{2} + \frac{
\alpha}{\alpha +1}}.
\end{equation*}
 If $\bar c t_0>c$, then, again  by \cite[Lemma 2.71]{Le24}, we have 
 \[\mathcal{H}^{n-1} (\p S_u(x_{0}, t_{0})) \leq \mathcal{H}^{n-1} (\p \Omega) \leq \mathcal{H}^{n-1} (\p B_\rho(0)) \leq C(n)\rho^{n-1} \leq C(n,  \lambda, \Lambda,  \rho) t_0^{\frac{n-2}{2} + \frac{
\alpha}{\alpha +1}}.\]
We have established \eqref{Hausdorffest} in all cases.

To prove \eqref{muFb}, we can assume without loss of generality that $\mu_{ \F}\geq 0$. Let $\nu$ be the outer unit normal vector field on $\p S_u(x_0, t_0)$. Then, the divergence theorem gives
 \begin{equation*}
 \begin{split}
 |\mu_{ \F}|(S_u(x_0, t_0)) =\int_{S_u(x_0, t_0)} \diver \F \, dx&=\int_{\p S_u(x_0, t_0)} \F\cdot \nu\, d \mathcal{H}^{n-1}\\ 
 &\leq  \|\F\|_{L^{\infty} (S_u(x_0, t_0))} \mathcal{H}^{n-1} (\p S_u(x_{0}, t_{0}))\\& \leq C(n, \lambda, \Lambda,  \rho) \|\F\|_{L^{\infty} (S_u(x_0, t_0))} t_0^{\frac{n-2}{2} + \frac{
\alpha}{\alpha +1}}.
\end{split}
\end{equation*}
The lemma is proved.
\end{proof}
\medskip
For later references, we record here an easy consequence of the H\"older inequality and the volume estimates in Lemma \ref{vol_est}.
\begin{rem}
\label{n2rem}
Assume that $u$ and $\Omega$ satisfy \eqref{glb_1}--\eqref{glb_4} and $\mu\in L^q(\Omega)$ where $q>n/2$. Then, for all
$x_0\in \overline{\Omega}$ and $t_{0}>0$, we have
\[|\mu|(S_u(x_0, t_0))\leq C(n, q,  \lambda, \Lambda,  \rho) \|\mu\|_{L^q(S_u(x_0, t_0))} t_0^{\frac{n}{2}-\frac{n}{2q}}.\]
\end{rem}

\medskip 
We will use the following H\"{o}lder inequality in Lorentz spaces, due to O'Neil \cite{ONeil63}.
\begin{thm}[H\"{o}lder inequality in Lorentz spaces]
\label{thm: Hol_Lor}
Let $\Omega\subset\R^n$ be open.
Let $f, g: \Omega \longrightarrow \R$ be two Lebesgue-measurable functions. Suppose $1\leq p_1, p_2, p<\infty$ and $0<q_1, q_2, q \leq \infty$ satisfy $\frac{1}{p}=\frac{1}{p_1}+\frac{1}{p_2}$ and $\frac{1}{q}=\frac{1}{q_1}+\frac{1}{q_2}$. If $\|f\|_{L^{p_1, q_1}(\Omega)} < \infty$ and $\|g\|_{L^{p_2, q_2}(\Omega)} < \infty$, then
\[
\|f g\|_{L^{p, q}(\Omega)} \leq C(p_1, p_2, q_1, q_2)\|f\|_{L^{p_1, q_1}(\Omega)}\|g\|_{L^{p_2, q_2}(\Omega)}.
\]
    \end{thm}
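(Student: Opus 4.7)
The plan is to reduce the inequality to a classical H\"{o}lder inequality on $((0,\infty),\,dt/t)$ via the decreasing-rearrangement characterization of the Lorentz quasinorm. For a Lebesgue measurable $f$ on $\Omega$, let $f^*(t):=\inf\{s>0:|\{x\in\Omega:|f(x)|>s\}|\leq t\}$ denote its decreasing rearrangement, and let $\mu_f(t):=|\{|f|>t\}|$ be the distribution function. A layer-cake computation (substituting $s=\mu_f(t)$) gives, with the paper's normalization, the identity
\[
\|f\|_{L^{p,q}(\Omega)}^q \;=\; \int_0^\infty (t^{1/p}f^*(t))^q\,\tfrac{dt}{t}\quad (q<\infty),\qquad \|f\|_{L^{p,\infty}(\Omega)}\;=\;\sup_{t>0}t^{1/p}f^*(t).
\]
The key pointwise submultiplicativity is $(fg)^*(t_1+t_2)\leq f^*(t_1)\,g^*(t_2)$ for all $t_1,t_2>0$, which follows from the inclusion $\{|fg|>f^*(t_1)g^*(t_2)\}\subseteq\{|f|>f^*(t_1)\}\cup\{|g|>g^*(t_2)\}$ (whose measure is at most $t_1+t_2$); setting $t_1=t_2=t/2$ yields $(fg)^*(t)\leq f^*(t/2)g^*(t/2)$.

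With this in hand, I would compute, after the change of variables $t=2s$ and using $1/p=1/p_1+1/p_2$,
\[
\int_0^\infty (t^{1/p}(fg)^*(t))^q\,\tfrac{dt}{t} \;\leq\; 2^{q/p}\int_0^\infty\bigl(s^{1/p_1}f^*(s)\bigr)^q\bigl(s^{1/p_2}g^*(s)\bigr)^q\,\tfrac{ds}{s},
\]
and then apply the classical H\"{o}lder inequality on $((0,\infty),\,ds/s)$ with exponents $a=q_1/q$ and $b=q_2/q$. These are conjugate since $1/a+1/b=q(1/q_1+1/q_2)=1$, and both satisfy $a,b\geq 1$ because $1/q\geq 1/q_1,1/q_2$ forces $q_1,q_2\geq q$. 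This delivers the bound $2^{1/p}\,\|f\|_{L^{p_1,q_1}(\Omega)}\|g\|_{L^{p_2,q_2}(\Omega)}$ after taking $q$-th roots, proving the theorem for $q<\infty$. The endpoint $q=\infty$ forces $q_1=q_2=\infty$ (no finite positive values can satisfy $1/q_1+1/q_2=0$) and follows immediately from the submultiplicativity after passing to suprema.

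No step is genuinely difficult here; this is essentially O'Neil's original argument. The only items that require care are (i) the identity between the distribution-function definition and the rearrangement definition of the Lorentz quasinorm (the first display above), which is a routine Fubini computation, and (ii) the verification that the H\"{o}lder exponents lie in $[1,\infty]$. The real value of the theorem in this paper lies in the applications, namely pairing an $L^{n,1}$ datum $(D^2u)^{1/2}\F$ with the $L^{n/(n-1),\infty}$ Monge--Amp\`ere gradient of the Green's function provided by Theorems~\ref{Green_int_thm} and~\ref{Green_gl_thm} to obtain an $L^1$ bound.
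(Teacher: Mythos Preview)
The paper does not prove this theorem; it is quoted as a known result of O'Neil \cite{ONeil63} and used as a black box (in \eqref{int_ubd13} and \eqref{lor_1}). There is therefore nothing in the paper to compare your argument against.

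That said, your sketch is the standard rearrangement proof and is correct. The identity between the distribution-function definition and the rearrangement form of $\|\cdot\|_{L^{p,q}}$ holds exactly with the paper's normalization (the factor $p^{1/q}$ is absorbed by the Fubini computation you describe), the submultiplicativity $(fg)^*(t_1+t_2)\le f^*(t_1)g^*(t_2)$ is justified by the set inclusion you give together with $\mu_f(f^*(t_1))\le t_1$, and the H\"older step on $((0,\infty),ds/s)$ with exponents $q_1/q,\,q_2/q\in[1,\infty]$ covers all cases with $q<\infty$, including the case where exactly one $q_i=\infty$ (then $q$ equals the other $q_j$ and one simply pulls out $\sup_s s^{1/p_i}g^*(s)$). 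Your treatment of $q=\infty$ is also correct. Your closing remark about how the inequality is used in the paper---pairing the $L^{n/(n-1),\infty}$ bound on the Monge--Amp\`ere gradient of the Green's function with an $L^{n,1}$ datum---is exactly the point.
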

We recall the layer-cake formula (see \cite[Lemma 2.75]{Le24}).
\begin{lem}[The layer-cake formula] 
\label{lcf}
Let $E\subset \R^n$ be a Borel set and let $v:E\rightarrow \R$ be a measurable function. Assume that $\mu:E\rightarrow\R$ is an integrable, nonnegative function. Let us also use $\mu$ for the measure with density $\mu(x)$. Then
$$\int_E |v(x)| \mu(x)\, dx= \int_0^\infty  \mu(E\cap \{|v|>t\})\,dt,$$
and for $p\in (1,\infty)$, we have
$$\int_E |v(x)|^p\, dx= p\int_0^\infty t^{p-1} |E\cap \{|v|>t\}|\, dt.$$
\end{lem}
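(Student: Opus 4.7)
The plan is to derive both identities from a single Fubini application after rewriting the integrand as an iterated integral in a dummy level variable $t$. This is the standard route to the layer-cake formula, and the only care needed is that everything in sight is nonnegative so that Tonelli's theorem legitimizes the exchange of order.

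For the first identity, I would start from the pointwise representation
\[
|v(x)| \;=\; \int_0^{|v(x)|} \! 1 \, dt \;=\; \int_0^\infty \mathbf{1}_{\{t \,<\, |v(x)|\}}\, dt,
\]
multiply both sides by the nonnegative density $\mu(x)$, and integrate over $E$. Since the integrand $\mathbf{1}_{\{|v(x)|>t\}}\mu(x)$ is nonnegative and measurable on $E \times (0,\infty)$, Tonelli's theorem lets me swap the order of integration, yielding
\[
\int_E |v(x)|\,\mu(x)\,dx \;=\; \int_0^\infty \!\int_E \mathbf{1}_{\{|v(x)|>t\}}\,\mu(x)\,dx\,dt \;=\; \int_0^\infty \mu\bigl(E \cap \{|v|>t\}\bigr)\,dt,
\]
which is precisely the first claim.

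For the $p$-th power identity I would use the same idea with the representation $|v(x)|^p = \int_0^{|v(x)|} p s^{p-1}\,ds = \int_0^\infty p t^{p-1}\mathbf{1}_{\{|v(x)|>t\}}\,dt$, integrate over $E$ with respect to Lebesgue measure, and apply Tonelli once more to obtain
\[
\int_E |v(x)|^p\,dx \;=\; p\int_0^\infty t^{p-1}\,\bigl|E \cap \{|v|>t\}\bigr|\,dt.
\]
Alternatively, this can be viewed as the first identity applied with $|v|$ replaced by $|v|^p$ and $\mu\equiv 1$, followed by the substitution $t = s^p$, which is a bookkeeping matter.

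There is no real obstacle here; the only subtlety worth flagging is measurability of the superlevel sets $E\cap\{|v|>t\}$ for each $t$, which follows from measurability of $v$, and integrability of $\mu$ (for the first part) or finiteness of $|E\cap\{|v|>t\}|$ for a.e.\ $t$ (for the second part), both of which are automatic from the hypotheses whenever the left-hand sides are finite; if the left-hand side is infinite, both sides are $+\infty$ and the identity is trivial.
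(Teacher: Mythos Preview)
Your proof is correct and is the standard Tonelli argument for the layer-cake formula. Note that the paper does not actually prove this lemma: it is stated as a recalled fact with a citation to \cite[Lemma 2.75]{Le24}, so there is no in-paper proof to compare against. Your argument would serve perfectly well as a self-contained proof if one were desired.
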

\medskip
The following proposition is our key estimate. It allows us to estimate the distribution function for the Monge--Amp\`ere gradient of the Green's function $g$ via estimates of the distribution function of 
$g$.
\begin{prop}
\label{keyprop}
Let $\Omega\subset\R^n$ and let $V\subset\Omega$ be open and proper. Let $u\in C^3(\Omega)$ be a convex function satisfying $\det D^2 u\geq\lambda>0$.
 Let $g_V(\cdot, y)$ be the Green's function of the linearized Monge--Amp\`{e}re operator $D_i(U^{ij} D_j)$ in $V$ with pole $y \in V$. 
Then for all $y\in V$ and constants $k, t>0$, we have
 \begin{equation}\label{keyineq1}
        |\{x \in V : |(D^2 u(x))^{-1/2} D_x g_V(x, y)| > t\}| \leq \frac{k}{\lambda t^2} + |\{x \in V : g_{V}(x, y) > k\}|.
        \end{equation}
\end{prop}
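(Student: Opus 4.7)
The plan is to split the superlevel set $\{x\in V : |(D^2u(x))^{-1/2}D_xg_V(x,y)|>t\}$ according to whether the Green's function itself is large or small at $x$:
\[
\{|(D^2u)^{-1/2}Dg_V|>t\} \subset \bigl(\{|(D^2u)^{-1/2}Dg_V|>t\}\cap\{g_V\le k\}\bigr) \;\cup\; \{g_V>k\}.
\]
The second piece contributes the term $|\{g_V(\cdot,y)>k\}|$ on the right-hand side of \eqref{keyineq1} for free, so the real task is to bound the first piece by $k/(\lambda t^2)$. Since the obstruction that governs the first piece is an $L^2$ smallness, it should come from a Chebyshev inequality applied to an energy estimate.

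To produce that energy estimate I would test the defining equation \eqref{Gdef} for $g_V(\cdot,y)$ against the truncation $v_k:=\min\{g_V(\cdot,y),\,k\}$. By Remark \ref{rem: G_prop}(1), it is enough to check that $v_k\in W_0^{1,2}(V)\cap C(\overline V)$; granting this, \eqref{Gdef} gives
\[
\int_V U^{ij}D_j g_V(\cdot,y)\,D_i v_k\,dx \;=\; v_k(y) \;=\; k,
\]
where $v_k(y)=k$ follows because $g_V(x,y)\to+\infty$ as $x\to y$, so $v_k\equiv k$ in some neighborhood of $y$. Since $Dv_k = Dg_V\,\chi_{\{g_V<k\}}$ almost everywhere and
\[
U^{ij}\xi_i\xi_j \;=\; (\det D^2u)\,|(D^2u)^{-1/2}\xi|^2 \;\ge\; \lambda\,|(D^2u)^{-1/2}\xi|^2
\]
by the hypothesis $\det D^2 u\ge\lambda$, this identity yields the Caccioppoli-type bound
\[
\lambda \int_{\{g_V\le k\}} |(D^2u)^{-1/2}D_x g_V(\cdot,y)|^2\,dx \;\le\; k,
\]
and Chebyshev's inequality then controls the first piece by $k/(\lambda t^2)$, which closes the proof.

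The only genuine obstacle is verifying that $v_k$ is admissible as a test function. Continuity of $v_k$ on $\overline V$ is not immediate because of the pole: away from $y$ it follows from Proposition \ref{prop: G_reg} (continuity of $g_V$ on $\overline V\setminus B_r(y)$), and near $y$ it is trivial since $v_k\equiv k$ there. The $W^{1,2}$ regularity of $v_k$ on $V$ follows from the same two regions, using $g_V\in W^{1,2}(V\setminus B_r(y))$ and the fact that $v_k$ is constant on a neighborhood of $y$, so the truncation stays in $W^{1,2}(V)$. The zero boundary trace of $v_k$ is inherited from $g_V\in W_0^{1,q}(V)$ via continuity up to $\partial V$ (for proper $V$) together with the standard Stampacchia truncation argument. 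Once this technical checking is done, the manipulations above are essentially automatic.
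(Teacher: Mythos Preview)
Your proposal is correct and follows essentially the same approach as the paper: the same decomposition of the superlevel set, the same test function $\min\{g_V(\cdot,y),k\}$ plugged into \eqref{Gdef}, the identity $U^{ij}\xi_i\xi_j=(\det D^2u)\,|(D^2u)^{-1/2}\xi|^2$, and Chebyshev's inequality. Your discussion of the admissibility of the truncation as a test function is in fact more detailed than the paper's, which simply invokes Proposition~\ref{prop: G_reg}.
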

\begin{proof} Fix $y\in V$. 
Let \[\xi:= g_V(\cdot, y) \quad \text{and }\phi(x):= (D^2 u(x))^{-1/2} D_x g_V(x, y).\] To simplify, we use this notation
\[\{|\phi|>s\}:=\{x\in V: |\phi(x)|>s\},\quad \{\xi>s\}:=\{x\in V: \xi(x)>s\}.\]
 For constants $t, k >0$, we have
	\begin{equation}\label{intD1}
		 \{|\phi| > t\} \subset \{\xi > k\} \cup \Big(\{|\phi| > t\} \cap \{\xi \leq k\}\Big).
	\end{equation}
   We claim that
	\begin{equation} \label{intD2}
		\int_{\{\xi \leq k\}} U^{ij} D_i \xi D_j \xi \; dx = k.
	\end{equation}
This identity appears in various forms in \cite{Le24} such as equations $(14. 23)$ and $(14.71)$ there. For the reader's convenience, we include its 
proof.
	Indeed, let \[
		\xi_k:=\min\{\xi, k\}.\] 
	Then $\xi_k = \xi$ in $\{\xi < k\}$ and $D\xi_k = 0$ in $\{\xi > k\}$. From Proposition \ref{prop: G_reg}, $\xi_k \in W^{1, 2}_0(V)\cap C(\overline{V})$. Remark \ref{rem: G_prop} tells that we can use $\xi_k$ as a test function in \eqref{Gdef}, so 
	\[
		k = \xi_k(y) = \int_V U^{ij}D_i\xi D_j \xi_k \; dx = \int_{\{\xi \leq k\}} U^{ij}D_i\xi D_j \xi \; dx.
	\]
Thus, \eqref{intD2} is proved as claimed.

\medskip	
	Using \eqref{intD2} and recalling $|\phi|^2 = (D^2 u)^{-1}D\xi \cdot D\xi$, we have
	\begin{equation}\label{intD3}
		\begin{aligned}
			\big| \{|\phi| > t\}  \cap \{\xi \leq k\}\big| \leq \int_{\{\xi \leq k\}} \frac{|\phi|^2}{t^2}\, dx&= \int_{\{\xi \leq k\}} \frac{(D^2 u)^{-1} D \xi \cdot D \xi}{t^2} \; dx\\
			&= t^{-2} \int_{\{\xi \leq k\}} (\det D^2 u)^{-1}U^{ij} D_i\xi D_j \xi\; dx\\
			& \leq \frac{k}{\lambda t^2}.
		\end{aligned}
	\end{equation}
	The proposition follows from \eqref{intD1} and \eqref{intD3}.
\end{proof}

\section{Lorentz space estimates for the Green's function}\label{sec: int_G}

In this section, we prove Theorems \ref{Green_int_thm} and \ref{Green_gl_thm}. 

\medskip
For Theorem \ref{Green_int_thm}, we first establish geometric controls and measure estimates for the superlevel sets of the Green's function $g_{S_u(x_0, h)}(\cdot, y)$ for compactly contained sections $S_u(x_0, h)$. This lemma extends \cite[Theorem 14.11]{Le24} into the weak $L^p$ space at the end point $p = n/(n-2)$ when $n \geq 3$.

\begin{lem}\label{Gvol}
Let $u \in C^3(\Omega)$ be a convex function satisfying \eqref{MAu}, where $\Omega\subset \R^n$.
Assume $S_u\left(x_0, 2 h\right) \Subset \Omega$ where $x_0\in\Omega$ and $h>0$. Let $g_{S_u(x_0, h)}(\cdot, y)$ be the Green's function of the linearized Monge--Amp\`{e}re operator $D_i(U^{ij} D_j)$ in $S_u(x_0, h)$ with pole $y\in S_u(x_0, h)$. There exist constants $\eta(n, \lambda, \Lambda) \in (0, 1)$ and $\tau_0(n, \lambda, \Lambda)>1$ such that the following statements hold.
\begin{enumerate}
\item[(i)] For all $y\in S_u(x_0, h)$, we have
$S_u(y, 2\eta h) \Subset S_u(x_0, 2h)$ and 
for all $t> \tau_0 h^{-\frac{n-2}{2}}$,
\begin{equation}
\label{Gtlarge0}
     \{x \in S_u(x_0, h): g_{S_u(x_0, h)}(x, y) > t\} \subset
     \begin{cases}
         S_u(y, 2\eta h 2^{-t / \tau_0}) &\text{if } n =2,\\
         S_u(y, (4\tau_0)^{\frac{2}{n-2}} t^{-\frac{2}{n-2}}) &\text{if } n \geq 3.
     \end{cases}
\end{equation}
\item[(ii)] For all $y\in S_u(x_0, h)$ and  $t > 0$, we have
\begin{equation}\label{intG_lvl}
    |\{x \in S_u(x_0, h) : g_{S_u(x_0, h)}(x, y) > t\}| \leq 
    \begin{cases}
         C(\lambda, \Lambda)h2^{-t/\tau_0} &\text{ if } n = 2, \\
        C(n, \lambda, \Lambda)t^{-\frac{n}{n-2}}& \text{ if } n\geq 3.
    \end{cases}
\end{equation}
As a consequence, if $n \geq 3$, then
    \[
        \|g_{S_u(x_0, h)}(\cdot, y)\|_{L^{\frac{n}{n-2}, \infty}(S_u(x_0, h))} \leq C(n, \lambda, \Lambda).
    \]
    \end{enumerate}
\end{lem}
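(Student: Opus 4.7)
\textbf{Proof plan for Lemma \ref{Gvol}.}

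The two statements are tied together by the same mechanism: superlevel sets of $g_{S_u(x_0,h)}(\cdot,y)$ are trapped inside concentric sections around the pole whose heights shrink explicitly with $t$, and then the volume estimates of Lemma \ref{vol_est} convert such geometric containment into the measure bound (ii). So the real work is in (i); (ii) is a direct consequence.

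For the geometric preamble, I would use the inclusion property (Theorem \ref{thm: inclusion}) applied at scale $t=2h$ with $r=1/2$: choose $s\in(1/2,1)$ and the associated constant $c_0(s-1/2)^{p_1}$, so that for every $y\in S_u(x_0,h)\subset S_u(x_0,\tfrac12\cdot 2h)$ we get $S_u(y,2\eta h)\subset S_u(x_0,s\cdot 2h)\Subset S_u(x_0,2h)$, for a suitable $\eta=\eta(n,\lambda,\Lambda)\in(0,1)$. In particular $S_u(y,2\eta h)$ is compactly contained in $S_u(x_0,2h)$, so the pointwise estimates for the Green's function from \cite[Chapter 14]{Le24} (in particular the upper bounds in Theorem 14.11 there) apply.

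Those pointwise bounds take the form: there exists $\tau_0=\tau_0(n,\lambda,\Lambda)>1$ such that for every $y\in S_u(x_0,h)$ and every $s\in(0,2\eta h]$,
\[
\sup_{x\in S_u(x_0,h)\setminus S_u(y,s)} g_{S_u(x_0,h)}(x,y)\leq
\begin{cases}
\tau_0\bigl(1+\log_2(2\eta h/s)\bigr) & \text{if }n=2,\\
\tau_0\, s^{-(n-2)/2} & \text{if }n\geq 3.
\end{cases}
\]
Inverting this estimate gives (i). For $n\geq 3$, the inequality $\tau_0 s^{-(n-2)/2}<t$ is equivalent to $s>(\tau_0/t)^{2/(n-2)}$; choosing $s=(4\tau_0)^{2/(n-2)}t^{-2/(n-2)}$ (which lies in $(0,2\eta h]$ precisely when $t>\tau_0 h^{-(n-2)/2}$, up to adjusting $\tau_0$) shows $\{g_{S_u(x_0,h)}(\cdot,y)>t\}\subset S_u(y,s)$. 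For $n=2$, the analogous inversion of the logarithmic bound gives $s=2\eta h\cdot 2^{-t/\tau_0}$, again valid for $t>\tau_0$.

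For (ii), apply the volume estimate $|S_u(y,s)|\leq C(n)\lambda^{-1/2}s^{n/2}$ to the containment in (i). When $n\geq 3$ and $t>\tau_0 h^{-(n-2)/2}$, this yields $|\{g>t\}|\leq C(n,\lambda,\Lambda)t^{-n/(n-2)}$; for $t\leq \tau_0 h^{-(n-2)/2}$ one uses the crude bound $|\{g>t\}|\leq |S_u(x_0,h)|\leq C\lambda^{-1/2}h^{n/2}$ and observes that $h^{n/2}\leq \tau_0^{n/(n-2)}t^{-n/(n-2)}$ in that range, so the same exponent holds throughout. The $n=2$ case is analogous, with $|S_u(y,2\eta h\cdot 2^{-t/\tau_0})|\leq C(\lambda,\Lambda)h\,2^{-t/\tau_0}$ for $t>\tau_0$ and the trivial bound otherwise (absorbed by enlarging the constant). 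The weak $L^{n/(n-2)}$ bound for $n\geq 3$ is then immediate from the definition of $\|\cdot\|_{L^{n/(n-2),\infty}}$.

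The main obstacle is the pointwise estimate invoked in (i); this is where compact containment and the affine-invariant Harnack machinery of Caffarelli--Guti\'errez enter, but it is precisely the content of the relevant result in \cite[Chapter 14]{Le24}, which I plan to cite rather than reprove. Everything else is arithmetic and use of the volume estimates of Lemma \ref{vol_est}.
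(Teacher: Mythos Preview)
Your overall strategy is correct and matches the paper's: use the inclusion property to get $S_u(y,2\eta h)\Subset S_u(x_0,2h)$, invoke the superlevel-set control from \cite[Chapter~14]{Le24}, invert it, and then pass to (ii) via Lemma~\ref{vol_est}. The handling of small $t$ by the trivial bound $|S_u(x_0,h)|\le C h^{n/2}$ is exactly what the paper does.

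There is one step you have glossed over. You observe $S_u(y,2\eta h)\Subset S_u(x_0,2h)$ and then immediately write down pointwise bounds for $g_{S_u(x_0,h)}(\cdot,y)$. But the result you want to cite (it is \cite[Lemma~14.10]{Le24}, not Theorem~14.11, which gives $L^p$ rather than pointwise control) is applied in the paper to $g_{S_u(x_0,2h)}(\cdot,y)$, where the pole $y\in S_u(x_0,h)$ is uniformly well inside the domain. For $g_{S_u(x_0,h)}$ with $y$ near $\partial S_u(x_0,h)$ there is no uniform $\eta'$ with $S_u(y,\eta' h)\Subset S_u(x_0,h)$, so the cited lemma does not apply directly with constants independent of $y$. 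The paper bridges this by the domain monotonicity of Green's functions: setting $\tilde g=g_{S_u(x_0,2h)}(\cdot,y)-g_{S_u(x_0,h)}(\cdot,y)$, one has $U^{ij}D_{ij}\tilde g=0$ in $S_u(x_0,h)$ with $\tilde g\ge 0$ on the boundary, so ABP gives $g_{S_u(x_0,h)}\le g_{S_u(x_0,2h)}$ and hence
\[
\{x\in S_u(x_0,h):g_{S_u(x_0,h)}(x,y)>t\}\subset\{x\in S_u(x_0,2h):g_{S_u(x_0,2h)}(x,y)>t\}.
\]
Once you insert this two-line comparison, your argument goes through verbatim and coincides with the paper's.
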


\begin{proof} 
The inclusion property of interior sections (Theorem \ref{thm: inclusion}) shows that there is a constant $\eta = \eta(n, \lambda, \Lambda) \in (0, 1)$ such that 
$S_u(y, 2\eta h) \Subset S_u(x_0, 2h)$ for any $y\in S_u(x_0, h)$. Let \[\tilde{g}(x):=  g_{S_u(x_0, 2h)}(x, y) - g_{S_u(x_0, h)}(x, y).\]  As observed at the beginning of the proof of \cite[Lemma 14.7]{Le24}, we have $\tilde{g} \in W^{2, n}_{\text{loc}}(S_u(x_0, h))\cap C(\overline{S_u(x_0, h)})$ and it satisfies 
\[
        U^{ij} D_{ij}\tilde{g} = 0\quad \text{ in } S_u(x_0, h) \quad\text{ and }
        \tilde{g} \geq 0 \quad\text{ on } \partial S_u(x_0, h).
\]
The Aleksandrov--Bakelman--Pucci (ABP) maximum principle (see \cite[Theorem 9.1]{GT}) shows that $\tilde{g} \geq 0$ in $S_u(x_0, h)$. Thus,  \[g_{S_u(x_0, 2h)}(x, y) \geq g_{S_u(x_0, h)}(x, y) \geq 0 \quad\text{for all }x\in S_u(x_0, h).\] Consequently, we have for all $y\in S_u(x_0, h)$ and $t>0$, 
\begin{equation}
\label{compG}
    \{x\in S_u(x_0, h): g_{S_u(x_0, h)}(x, y) > t\} \subset \{x\in S_u(x_0, 2h): g_{S_u(x_0, 2h)}(x, y) > t\}.
\end{equation}

\medskip
To simplify, we denote $S:= S_u(x_0, 2h)$. Fix $y\in S_u(x_0, h)$. By \cite[Lemma 14.10]{Le24}, we have
\[
     \{x \in S: g_S(x, y) > \tau (2h)^{-\frac{n-2}{2}}\} \subset
     \begin{cases}
         S_u(y, 2\eta h 2^{-\tau / \tau_0}) &\text{if } n =2,\\
         S_u(y, 2(4\tau_0\tau^{-1})^{\frac{2}{n-2}}h) &\text{if } n \geq 3,
     \end{cases}
\]
for all $\tau > \tau_0 = C_1\eta^{-n/2}>1$, where $C_1 = C_1(n, \lambda, \Lambda) > 0$ is large. 

It follows that
\begin{equation}
\label{Gtlarge}
     \{x \in S: g_S(x, y) > t\} \subset
     \begin{cases}
         S_u(y, 2\eta h 2^{-t / \tau_0}) &\text{if } n =2,\\
         S_u(y, (4\tau_0)^{\frac{2}{n-2}} t^{-\frac{2}{n-2}}) &\text{if } n \geq 3
     \end{cases}
     \quad \text{if }t> \tau_0 h^{-\frac{n-2}{2}}.
\end{equation}

From \eqref{compG} and \eqref{Gtlarge}, we obtain \eqref{Gtlarge0}. Part (i) is proved.

\medskip
From the volume estimates of sections in Lemma \ref{vol_est}, we obtain for some $C_2(n, \lambda, \Lambda)>0$ and all $t> \tau_0 h^{-\frac{n-2}{2}}$, 
\begin{equation}\label{intG1}
     |\{x \in S: g_S(x, y) > t\}| \leq
     \begin{cases}
         |S_u(y, 2\eta h 2^{- t/ \tau_0})|\leq C_2h 2^{-t/\tau_0} &\text{ if } n =2,\\
         | S_u(y, (4\tau_0)^{\frac{2}{n-2}} t^{-\frac{2}{n-2}})| \leq C_2t^{-\frac{n}{n-2}} &\text{ if } n \geq 3.
     \end{cases}
\end{equation}\par

Consider $ n = 2$. If $0 < t \leq \tau_0$, then  using Lemma \ref{vol_est}, we have 
    \begin{equation}
    \label{Gtsmall2}
         |\{x \in S : g_{S}(x, y) > t\}| \leq |S| \leq C_3(\lambda)h  \leq 2C_3h2^{-t/\tau_0}.
    \end{equation}
    From \eqref{compG}, \eqref{intG1} and \eqref{Gtsmall2},  by choosing $C= \max\{C_2, 2C_3\}$, we obtain \eqref{intG_lvl} for $n=2$. 

Consider now $n \geq 3$.  
 If $0 < t \leq \tau_0 h^{-\frac{n-2}{2}}$,  then  using Lemma \ref{vol_est}, we have 
\begin{equation}
\label{Gtsmall3}
    |\{x \in S: g_S(x, y) > t\}| \leq |S| \leq C_4(n, \lambda)h^{n/2} 
    \leq C_4(n, \lambda, \Lambda)t^{-\frac{n}{n-2}}.
\end{equation}
From \eqref{compG}, \eqref{intG1} and \eqref{Gtsmall3}, by choosing $C= \max\{C_2, C_4\}$, we obtain \eqref{intG_lvl} for 
$n\geq 3$. Part (ii) is proved.
The proof of the lemma is complete.
\end{proof}

\medskip
Now, we are ready to prove Theorem \ref{Green_int_thm}.
\begin{proof}[Proof of Theorem \ref{Green_int_thm}] Denote $S_h:=S_u(x_0, h)$.
Fix $y \in S_h$. Let \[\xi(x):= g_{S_h}(x, y) \quad \text{and }\phi(x):= (D^2 u(x))^{-1/2} D_x g_{S_h}(x, y).\]

\medskip
\noindent
\textbf{Step 1:} We first prove \eqref{int_Dg_level} from which \eqref{int_Dg_1} follows.

 We will use Proposition \ref{keyprop} so it remains to estimate the measure $|\{x\in S_h: \xi(x) > k\}|$ from above and then optimize over $k$. 
 
        Combining Proposition \ref{keyprop} and Lemma \ref{Gvol}, we obtain for all $k>0$ and $t>0$
    \begin{equation}\label{intD5}
         \big| \{x\in S_h: |\phi(x)| > t\} \big| \leq 
         \begin{cases}
            \frac{k}{\lambda t^2} + C_1(\lambda, \Lambda)h2^{-k/\tau_0} &\text{ if } n = 2, \\
             \frac{k}{\lambda t^2} + C_1(n, \lambda, \Lambda)k^{-\frac{n}{n-2}} &\text{ if } n \geq 3,
         \end{cases}
    \end{equation}
    where $\tau_0(\lambda, \Lambda) > 1$. 
    
    Letting in \eqref{intD5}
    \begin{equation*}
     k=     \begin{cases}
     h+ 2 \log_{2^{1/\tau_0}} \max\{t, 1\} &\text{ if }n = 2,\\  t^{\frac{n-2}{n-1}} &\text{ if }n \geq 3,
             \end{cases}
    \end{equation*}
        we obtain \eqref{int_Dg_level}.
        
    \medskip
    \noindent
	\textbf{Step 2:} We prove \eqref{int_Dg_2} for $p \in \big( 1, \frac{n}{n-1}\big)$.

	Consider  $n \geq 3$.
    Let $\chi_{S_h}$ be the characteristic function of $S_h$.  
    By the H\"{o}lder inequality in Lorentz spaces (Theorem \ref{thm: Hol_Lor}) and \eqref{int_Dg_1}, we have
    \begin{equation}\label{lor_1}
    	\begin{aligned}
        	\int_{S_h} |\phi|^p \; dx = \|\phi\|_{L^{p, p}(S_h)}^p &\leq C_2(n, p)\|\phi\|_{L^{\frac{n}{n-1}, \infty}(S_h)}^p\|\chi_{S_h}\|_{L^{\frac{np}{n+p-np}, p}(S_h)}^p\\
        	& \leq C_3(n, \lambda, \Lambda, p) \int_0^\infty t^{p-1}|\{x\in S_h: |\chi_{S_h}(x)| > t\}|^{\frac{n+p-np}{n}}\; dt\\
        	& = C_3 \int_0^1 t^{p-1}|S_h|^\frac{n+p-np}{n}\; dt\\
        	& \leq C_4(n, \lambda, \Lambda, p) h^{\frac{n}{2}- \frac{n-1}{2}p},
    	\end{aligned}
    \end{equation}
    where the last inequality comes from Lemma \ref{vol_est}. This proves \eqref{int_Dg_2} for $n \geq 3$.

    \medskip
    Consider  now $ n = 2$. Then $p\in (1, 2)$. 
    From \eqref{intD5}, and for $k > 0$ and $q \in (1, \infty)$ to be chosen, we have
    \[
         | \{x\in S_h: |\phi (x)| > t \}| \leq \frac{k}{\lambda t^2} + C_1(\lambda, \Lambda)2^{-k/\tau_0}h \leq \frac{k}{\lambda t^2} + C_5(\lambda, \Lambda, q)k^{-q}h.
    \]
     Let $k = t^\frac{2}{1+q}h^{\frac{1}{1+q}}$ so that $kt^{-2} = k^{-q}h$.  Then
       \[
         | \{x\in S_h: |\phi (x)| > t \}| \leq C_6(\lambda,\Lambda, q) h^{\frac{1}{1+q}} t^{\frac{-2q}{1+q}}.
         \]
      By the layer-cake formula (Lemma \ref{lcf}), we have for every $\tau > 0$
	\[
		\begin{aligned}
			\int_{S_{h}} |\phi|^p \; dx &= \int_{ \{x\in S_h: |\phi(x)| \leq \tau\}} |\phi|^p \; dx + \int_{\{x\in S_h: |\phi(x)| > \tau\}} |\phi|^p \; dx\\
			& \leq \tau^p|{S_{h}}| + p\int_{\tau}^\infty t^{p-1}\big| \{x\in S_h: |\phi(x)| > t\} \big| \; dt\\
			& \leq C_7(\lambda, \Lambda)\tau^p h  + C_8(\lambda, \Lambda, p, q)h^{\frac{1}{1+q}}\int_{\tau}^\infty t^{p-1-\frac{2q}{1+q}} \; dt.
            \end{aligned}
        \]
        We may choose $q = \frac{p+1}{2-p}$ so that $p - \frac{2q}{1+q} < 0$ (due to $p\in (1, 2)$) and the integral above converges. Then,
        \[
            \int_{S_{h}} |\phi|^p \; dx \leq C_7\tau^p h + C_9(\lambda, \Lambda, p)h^{\frac{1}{1+q}}\tau^{p - \frac{2q}{1+q}}.
        \]
        Setting $\tau = h^{-1/2}$, we obtain
        \eqref{int_Dg_2} for $n = 2$. This completes the proof.
\end{proof}

\medskip

For Theorem \ref{Green_gl_thm}, extending \cite[Theorem 14.22]{Le24} into the weak $L^p$ space at the end point $p = \frac{n}{n-2}$, we establish the following geometric controls on the superlevel sets and distribution function estimates for the Green's function $g_V$. Here, $V$ is no longer required to be compactly contained in the domain but we need suitable global conditions on the domain and the Monge--Amp\`ere potential. 

\begin{lem} \label{lem: glpG} Let $u$ and $\Omega$ satisfy \eqref{glb_1}--\eqref{glb_4}. 
Let $V\subset\Omega$ be open, proper, and contained in a section $S_u(x_0, h)$ of $u$ of height $h>0$. Let $g_V(\cdot, y)$ be the Green's function of the linearized Monge--Amp\`{e}re operator $D_i(U^{ij} D_j)$ in $V$ with pole $y \in V$.
\begin{enumerate}
\item[(i)]  There exist positive constants $C_\ast(n, \lambda, \Lambda, \rho)$,  $\bar C(\lambda,\Lambda,\rho)$ such that for all $y\in V$, we have
\begin{equation}
\label{gbGtlarge}
     \{x \in V: g_V(x, y) > t\} \subset
     \begin{cases}
         S_u(y, h 2^{-t /\bar{C}}) &\text{if } n =2,\\
         S_u(y, C_\ast t^{-\frac{2}{n-2}}) &\text{if } n \geq 3
     \end{cases}
     \quad \text{if }t> C_\ast(n, \lambda,\Lambda,\rho).
\end{equation}
\item[(ii)] For all $y \in V$ and all $t>0$, we have
\begin{equation}\label{glbG_lvl}
    |\{x \in V : g_{V}(x, y) > t\}| \leq 
    \begin{cases}
        C(\lambda, \Lambda, \rho) h2^{-t/C_1} &\text{ if } n = 2, \\
        C(n, \lambda, \Lambda, \rho)t^{-\frac{n}{n-2}} &\text{ if } n \geq 3,
    \end{cases}
\end{equation}
where $C_1 = C_1(\lambda, \Lambda, \rho) > 1$. Consequently, 
    \begin{equation*}
    	\|g_V(\cdot, y)\|_{L^{\frac{n}{n-2}, \infty}(V)} \leq C(n, \lambda, \Lambda, \rho) \quad\text{if } n\geq 3.
    \end{equation*}
    \end{enumerate}
\end{lem}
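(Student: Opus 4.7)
The plan is to follow the two-step structure of the proof of Lemma \ref{Gvol}: first establish the geometric containment \eqref{gbGtlarge} of the superlevel sets of $g_V$ inside sections of $u$ around the pole $y$, then derive the distribution estimate \eqref{glbG_lvl} by combining this containment with the volume estimates of Lemma \ref{vol_est}. The weak-$L^{n/(n-2)}$ bound in the case $n\geq 3$ is immediate from \eqref{glbG_lvl} and the definition of the Lorentz quasi-norm. Throughout, Savin's boundary localization theorem \cite[Theorem 3.1]{S1} will replace the interior inclusion property (Theorem \ref{thm: inclusion}) used in the interior setting, since the pole $y$ is now allowed to sit close to $\partial\Omega$.

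For part (i), I would first reduce to a canonical enlargement by monotonicity. Since $V\subset V':=\Omega\cap S_u(x_0,2h)$, a standard cutoff and approximation in the weak formulation \eqref{Gdef}, combined with the ABP maximum principle applied to $g_{V'}(\cdot,y)-g_V(\cdot,y)$ on $V\setminus B_r(y)$ as $r\to 0$, yields $g_V(\cdot,y)\leq g_{V'}(\cdot,y)$ in $V$, so it suffices to establish \eqref{gbGtlarge} for $g_{V'}$. Next, I would run the dyadic Harnack iteration underlying \cite[Lemma 14.10]{Le24} in its global form: at each stage, the Caffarelli--Guti\'errez interior/boundary Harnack inequality forces $g_{V'}(\cdot,y)$ to drop by a fixed multiplicative factor as one exits a suitably shrunk section around $y$. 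Under the global assumptions \eqref{glb_1}--\eqref{glb_4}, Savin's theorem guarantees that sections $S_u(y,s)$, even when they touch $\partial\Omega$, still enjoy the volume estimate $|S_u(y,s)|\sim s^{n/2}$ of Lemma \ref{vol_est}, which is precisely what the iteration requires. Tracking the decay rates across the iteration produces \eqref{gbGtlarge}.

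For part (ii), the regime $t>C_\ast$ is immediate from \eqref{gbGtlarge} and Lemma \ref{vol_est}: for $n=2$ the linear volume growth $|S_u(y,s)|\leq C\lambda^{-1/2}s$ yields the exponential decay $Ch\,2^{-t/C_1}$, while for $n\geq 3$ the growth $|S_u(y,s)|\leq C\lambda^{-1/2}s^{n/2}$ converts the radius $s=C_\ast t^{-2/(n-2)}$ into volume $Ct^{-n/(n-2)}$. For the leftover range $0<t\leq C_\ast$, I would use the trivial bound $|\{g_V>t\}|\leq|V|\leq|S_u(x_0,h)\cap\Omega|$, estimated by Lemma \ref{vol_est} through $C\min\{h^{n/2},|\Omega|\}$, and absorb it into the right-hand side of \eqref{glbG_lvl} by enlarging the constant, using that $|\Omega|\leq C(n,\rho)$ from \eqref{glb_1} and that we may assume $h\leq M(n,\lambda,\Lambda,\rho)$ by \cite[Lemma 9.7]{Le24}.

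The hardest step will be the boundary Harnack iteration for $y$ near $\partial\Omega$: the interior inclusion property is unavailable and one must verify that sections $S_u(y,s)$ behave uniformly well as they approach $\partial\Omega$, both geometrically and in their interaction with $\partial V$. This is exactly what Savin's localization provides—after an affine rescaling, boundary-touching sections of $u$ are comparable to interior ones—so the iteration proceeds with constants depending only on $n,\lambda,\Lambda,\rho$, and the estimates extend cleanly from the interior proof of Lemma \ref{Gvol} to the global statement.
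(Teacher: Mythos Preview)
Your proposal is correct and follows essentially the same approach as the paper: monotonicity of the Green's function, a global decay bound for $g$ on the boundaries of sections $S_u(y,t_0)$, and then the volume estimates of Lemma~\ref{vol_est}. The only notable difference is that the paper does not re-run the Harnack iteration you describe; instead it directly invokes the already-established bound \cite[Theorem~14.21]{Le24} (which gives $\max_{\partial S_u(y,t_0)} g_\Omega(\cdot,y)\leq C_1 t_0^{-(n-2)/2}$ for $n\geq 3$ and a logarithmic bound for $n=2$), uses $g_V\leq g_\Omega$ via the ABP argument, and for $n=2$ re-runs the proof of that theorem on $S_u(x_0,h)\cap\Omega$ with $h$ replacing the fixed scale $c$---exactly the modification you anticipated. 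One small terminological point: the boundary input is not a ``Caffarelli--Guti\'errez boundary Harnack inequality'' (their result is purely interior); the global estimate \cite[Theorem~14.21]{Le24} is built on Savin's localization and the boundary machinery of \cite{Le_TAMS, Le24}.
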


\begin{proof}
     From the bound on the Green's function \cite[Theorem 14.21]{Le24},  we have 
     $$\max_{x\in \partial S_u (y, t_0)}g_\Omega(x, y)\leq \begin{cases} 
     C_1(n,\lambda,\Lambda, \rho)\log_2 (1/t_0) & \mbox{if } n = 2,\\
     C_1(n,\lambda,\Lambda, \rho)t_0^{-\frac{n-2}{2}} &\mbox{if } n \geq 3, 
    \end{cases} 
$$
for all $y\in\Omega$ and $0<t_0< c=c(n, \lambda, \Lambda, \rho) < 1$.  Therefore, for $0 < s < c$, we have
    \begin{equation}\label{glbG1}
    S_u(y, s) \supset
    \begin{cases}
        \{x \in \Omega: g_\Omega(x, y) > C_1(\lambda, \Lambda, \rho) \log_2 (1/s)\} &\text{ if } n = 2, \\
        \{x \in \Omega: g_\Omega(x, y) > C_1(n, \lambda, \Lambda, \rho)s^{-\frac{n-2}{2}}\} &\text{ if } n \geq 3.
    \end{cases}
    \end{equation}

        We first prove \eqref{glbG_lvl} for the case $n\geq 3$. Fix $y\in V$. 
    As in \eqref{compG}, we have
    \begin{equation*}
    	\{x \in V: g_V(x, y) > t\} \subset \{x \in \Omega: g_\Omega(x, y) > t\} \quad \text{ for all } t > 0.
    \end{equation*}
    From \eqref{glbG1}, we see that for $t > C_1c^{-\frac{n-2}{2}}$, 
    \begin{equation}
    \label{gbGt3}
    \{x \in V: g_V(x, y) > t\} \subset \{x \in \Omega: g_\Omega(x, y) > t\}\subset S_u\big(y, \big(t/C_1\big)^{-\frac{2}{n-2}}\big).
    \end{equation}
    By Lemma \ref{vol_est}, we have for $t > C_1c^{-\frac{n-2}{2}}$, 
    \begin{equation}\label{glbG4}
             |\{x\in V: g_V(x, y) > t \}| \leq \big|S_u\big(y, \big(t/C_1\big)^{-\frac{2}{n-2}}\big)\big| \leq C_2(n, \lambda, \Lambda, \rho)t^{-\frac{n}{n-2}}.
    \end{equation}
    For $0 < t \leq C_1c^{-\frac{n-2}{2}}$, recalling $\Omega\subset B_{1/\rho}(0)\subset\R^n$,  we have
        \begin{equation}\label{glbG5}
        |\{x\in V: g_V(x, y) > t \}| \leq |\Omega| \leq C_3(n, \lambda, \Lambda, \rho) t^{-\frac{n}{n-2}}.
    \end{equation}
    Combining \eqref{glbG4} and \eqref{glbG5}, we obtain  \eqref{glbG_lvl} for all $t > 0$ when $n\geq 3$.

    \medskip
       We prove \eqref{glbG_lvl} for the case
    $n = 2$. As remarked after the statement of  Theorem \ref{Green_gl_thm}, $\Omega$ is itself a section of $u$ with height comparable to $1$.  Fix $y\in V$.  Note that 
     \begin{equation*}
    	\{x \in V: g_V(x, y) > t\} \subset \{x \in S_u(x_0, h)\cap \Omega: g_{S_u(x_0, h)\cap \Omega}(x, y) > t\} \quad\text{ for all } t > 0.
    \end{equation*}
     By repeating the proof of \cite[Theorem 14.21]{Le24} for $g_{S_u(x_0, h)\cap \Omega}(x, y)$ instead of $g_\Omega(x, y)$ (for example, in \cite[(14.48)]{Le24}, we can replace $c$ by $h$ and $c_2$ by $c_2h$), we obtain 
     $$\max_{x\in \partial S_u (y, t_0)}g_{S_u(x_0, h)\cap \Omega}(x, y)\leq 
C_4(\lambda,\Lambda, \rho) \log_2 (h/t_0)  $$
for all $y\in S_u(x_0, h)\cap \Omega$ and $0<t_0< ch$.  
     
     \medskip
It follows that for all $y\in V$ and all $t > C_4 \log_2 (1/c)$, 
    \begin{equation}
    \label{gbGt2}
         \{x \in V: g_V(x, y) > t\}\subset \{x \in S_u(x_0, h)\cap \Omega: g_{S_u(x_0, h)\cap \Omega}(x, y) > t\} \subset S_u(y, h 2^{-t/C_4}).
    \end{equation}
    From \eqref{gbGt3} and \eqref{gbGt2}, we obtain \eqref{gbGtlarge}, which is a global version of \eqref{Gtlarge0}. Part (i) is proved.
    
    \medskip
  
        Using the volume estimates for sections (Lemma \ref{vol_est}), we obtain for $t > C_4 \log_2 (1/c)$, 
\begin{equation}\label{glbG2}
        |\{x \in V: g_V(x, y) > t\}| \leq |S_u(y, h2^{-t/C_4})|\leq C(\lambda)h2^{-t/C_4}.
    \end{equation}
    On the other hand, if $ 0 < t \leq C_4 \log_2 (1/c)$, then
    \begin{equation}\label{glbG3}
           |\{x \in V: g_V(x, y) > t\}| \leq |S_u(x_0, h)| \leq C(\lambda)h  \leq C_5(\lambda, \Lambda, \rho)h 2^{-t/C_4}.
    \end{equation}
    Combining \eqref{glbG2} and \eqref{glbG3}, we obtain
     \eqref{glbG_lvl} for all $ t > 0$ when $n=2$. Part (ii) is completely proved and so is
the lemma.
\end{proof}

\begin{proof}[Proof of Theorem \ref{Green_gl_thm}]
	The proof is similar to that of Theorem \ref{Green_int_thm}. It uses Proposition \ref{keyprop} and Lemma  \ref{lem: glpG} instead of Lemma \ref{Gvol}, so we skip it.
	\end{proof}

\section{Maximum principles for linearized Monge--Amp\`ere equations} 
\label{sec_max}
In this section and the next, we present some applications of Theorems \ref{Green_int_thm} and \ref{Green_gl_thm} to the regularity of solutions to linearized Monge--Amp\`ere equations in divergence form.
This section focuses on maximum principles for subsolutions to \eqref{eqdiv}. They imply
uniform estimates 
for solutions to 
\eqref{eqdiv} with zero boundary value.

\medskip
We begin with the case of compactly contained sections.

\begin{prop}\label{intmax}
      Let $\Omega \subset \R^n$  be a convex domain and $u\in C^3(\Omega)$ be a convex function satisfying \eqref{MAu}. 
     Assume
      $\F \in  W^{1, n}_{\text{loc}}(\Omega; \R^n)$, $\mu\in L^n_{\text{loc}}(\Omega)$, and $S_u(x_0, 2h) \Subset \Omega$ where $x_0\in\Omega$ and $h>0$. Assume $v\in W^{2, n}_{\text{loc}}(S_u(x_0, h))\cap C(\overline{S_u(x_0, h)})$ satisfies
    \[        - U^{ij}D_{ij}v \leq \diver \F +\mu \; \quad \text{ in } \quad S_u(x_0, h).
 \]
    \begin{enumerate}
        \item[(i)] Assume $n \geq 3$, $\mu=0$, and $ (D^2u)^{1/2} \F \in L^{n, 1}(S_u(x_0, h);\R^n)$. Then 
        \[
            \sup_{S_u(x_0, h)} v \leq \sup_{\partial S_u(x_0, h)} v + C(n, \lambda, \Lambda)\| (D^2u)^{1/2} \F\|_{L^{n, 1}(S_u(x_0, h))}.
        \]
        \item[(ii)] Assume 
        $(D^2u)^{1/2} \F \in L^{q}(S_u(x_0, h);\R^n)$ for some $q > n$ and 
        there exist $M_0\geq 0$ and  $\e>0$ such that for $\mu^{+}=\max\{\mu, 0\}$, we have
  \begin{equation}
  \label{mump}
  \mu^{+}(S_u(z, s)) \leq M_0 s^{\frac{n-2}{2} +\e}\quad \text{for all sections } S_u(z, s)\Subset S_u(x_0, 2h).\end{equation}
         Then
        \[
             \sup_{S_u(x_0, h)} v \leq \sup_{\partial S_u(x_0, h)} v + C_\ast(n, \lambda, \Lambda, q)\| (D^2u)^{1/2} \F\|_{L^q(S_u(x_0, h))} h^{\frac{q-n}{2q}} + C_\star(n, \lambda, \Lambda, \e)M_0 h^{\e}.
        \]
        \end{enumerate}
\end{prop}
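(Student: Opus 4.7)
Write $S_h := S_u(x_0, h)$. The plan is to reduce the estimate for $v$ to an integral involving the Green's function $g_{S_h}(\cdot, y)$ of $L_u$ in $S_h$ and then deploy the Lorentz-type and superlevel-set estimates of Theorem \ref{Green_int_thm} and Lemma \ref{Gvol}. Let $M := \sup_{\partial S_h} v$. Since $\diver \F + \mu \in L^n_{\text{loc}}(S_h)$, Remark \ref{rem: G_prop}(2) supplies a unique solution $w \in W^{2, n}_{\text{loc}}(S_h) \cap W^{1, 2}_0(S_h) \cap C(\overline{S_h})$ to $-L_u w = \diver \F + \mu$ in $S_h$ with $w = 0$ on $\partial S_h$, together with the representation $w(y) = \int_{S_h} g_{S_h}(\cdot, y)(\diver \F + \mu)\, dx$. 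The difference $v - w - M$ lies in $W^{2, n}_{\text{loc}}(S_h) \cap C(\overline{S_h})$, satisfies $L_u(v - w - M) \geq 0$ a.e.\ in $S_h$, and is non-positive on $\partial S_h$; since $\det U = (\det D^2 u)^{n - 1} \geq \lambda^{n - 1}$, the ABP maximum principle (\cite[Theorem 9.1]{GT}) gives $v - w - M \leq 0$ on $\overline{S_h}$. Together these yield
\[
v(y) - \sup_{\partial S_h} v \;\leq\; \int_{S_h} g_{S_h}(x, y)\,(\diver \F(x) + \mu(x))\, dx.
\]

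For the divergence term, integration by parts (the boundary term vanishes because $g_{S_h}(\cdot, y) \in W^{1, q}_0(S_h)$ for every $q < n/(n-1)$ and $\F \in W^{1, n}$) together with the symmetry of $(D^2 u)^{\pm 1/2}$ gives
\[
\int_{S_h} g_{S_h}(\cdot, y)\,\diver \F\, dx = -\int_{S_h}\bigl[(D^2 u)^{-1/2} D_x g_{S_h}(\cdot, y)\bigr] \cdot \bigl[(D^2 u)^{1/2} \F\bigr]\, dx.
\]
For part (i), I would apply the H\"older inequality in Lorentz spaces (Theorem \ref{thm: Hol_Lor}) with the dual pair $(n/(n-1), \infty)$ and $(n, 1)$ and bound the first factor by $C(n, \lambda, \Lambda)$ using \eqref{int_Dg_1}. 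For part (ii), the classical H\"older inequality with conjugate exponents $q' = q/(q-1) \in (1, n/(n-1))$ and $q$ applies; taking $p = q'$ in \eqref{int_Dg_2}, a direct computation shows $(n/2 - (n-1)q'/2)/q' = (q - n)/(2q)$, producing the term $C_\ast\, h^{(q-n)/(2q)}\, \|(D^2 u)^{1/2} \F\|_{L^q(S_h)}$.

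For the $\mu$ contribution in part (ii), I use $g_{S_h}(\cdot, y) \geq 0$ and the layer-cake formula (Lemma \ref{lcf}):
\[
\int_{S_h} g_{S_h}(\cdot, y)\,\mu\, dx \leq \int_{S_h} g_{S_h}(\cdot, y)\,\mu^+\, dx = \int_0^\infty \mu^+\bigl(\{g_{S_h}(\cdot, y) > t\}\bigr)\, dt.
\]
I split this integral at $t_1 = C(n,\lambda,\Lambda)\, h^{-(n-2)/2}$ when $n \geq 3$, and $t_1 = \tau_0$ when $n = 2$, chosen so that the threshold of validity of the superlevel-set bound \eqref{Gtlarge0} is respected. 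For $0 < t \leq t_1$, I use $\{g_{S_h} > t\} \subset S_h$ together with the compact inclusion $S_h \Subset S_u(x_0, 2h)$ (which follows from the strict monotonicity of sections in $h$ and the hypothesis $S_u(x_0, 2h) \Subset \Omega$) to apply \eqref{mump} and obtain $\mu^+(\{g_{S_h} > t\}) \leq M_0 h^{(n-2)/2 + \e}$; multiplying by $t_1$ yields $C M_0 h^{\e}$. For $t > t_1$, \eqref{Gtlarge0} places $\{g_{S_h} > t\}$ inside a section $S_u(y, \rho(t))$ with $\rho(t) \leq 2\eta h$, hence compactly included in $S_u(x_0, 2h)$; applying \eqref{mump} produces the decay $C M_0 \rho(t)^{(n-2)/2 + \e}$, which integrates (polynomially in $t$ when $n \geq 3$, exponentially when $n = 2$) to another $C M_0 h^{\e}$.

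The main technical obstacle is Step 3: choosing the threshold $t_1 \sim h^{-(n-2)/2}$ so that, above $t_1$, the superlevel set is contained in a section compactly included in $S_u(x_0, 2h)$ (enabling \eqref{mump}), while below $t_1$ the trivial bound $\{g_{S_h} > t\} \subset S_h$ still produces a contribution of the optimal order $M_0 h^\e$. The essential geometric input here is Lemma \ref{Gvol}(i), whose refined description of the shape of the superlevel sets of $g_{S_h}(\cdot,y)$ via compactly contained sections $S_u(y,\rho)$ is precisely what allows the measure-growth hypothesis to be invoked; without this, one would only obtain a suboptimal dependence on $h$ in the $M_0$ term.
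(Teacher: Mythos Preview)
Your proposal is correct and follows essentially the same approach as the paper: reduce to the auxiliary Dirichlet problem, apply the ABP maximum principle and the Green's function representation \eqref{G_rep}, handle the $\F$-term via the Lorentz (part (i)) or classical (part (ii)) H\"older inequality together with Theorem \ref{Green_int_thm}, and control the $\mu$-term via the layer-cake formula split at a threshold $\sim h^{-(n-2)/2}$ using the superlevel-set inclusions of Lemma \ref{Gvol}(i). The only cosmetic point is that the threshold must be taken as $\tau_1 h^{-(n-2)/2}$ with $\tau_1(n,\lambda,\Lambda)$ possibly larger than $\tau_0$ so that the sections in \eqref{Gtlarge0} are guaranteed to lie inside $S_u(y,\eta h)\Subset S_u(x_0,2h)$, but your phrasing ``$t_1 = C(n,\lambda,\Lambda)\,h^{-(n-2)/2}$'' already accommodates this.
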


\begin{proof}
    Let us denote $S_h:= S_u(x_0, h)$. 
    Let $\psi\in W^{2, n}_{\text{loc}}(S_h)\cap W^{1,2}_0(S_h)\cap C(\overline{S_h})$ solve
	\[
		-U^{ij}D_{ij}\psi = \diver \F +\mu \quad \text{ in } S_h \; \quad \text{and }\quad \psi = 0 \quad \text{ on } \partial S_h;
	\]
	see \cite[Theorem 9. 30]{GT}.
	From the assumption, we have
	\[
		-U^{ij} D_{ij}(v - \psi) \leq 0\quad  \text{ in } S_h.
	\]
	Since $v - \psi \in W^{2, n}_{\text{loc}}(S_h)\cap C(\overline{S_h})$, the ABP maximum principle gives 
	\[
		\sup_{S_h} v - \sup_{S_h} \psi \leq \sup_{S_h} (v - \psi) \leq \sup_{\partial S_h} (v - \psi) = \sup_{\partial S_h} v.
	\]
	Hence, 
	\begin{equation} \label{gmp1}
		\sup_{S_h} v \leq \sup_{\partial S_h} v + \sup_{S_h} \psi.
	\end{equation}
    
    For $y \in S_h$, let $ g_{S_h}(x, y)$ be the Green's function of the linearized Monge--Amp\`{e}re operator $D_i(U^{ij} D_j)$ in $S_h$ with pole $y$. 
     Using the representation formula \eqref{G_rep} and integration by parts, we find 
	\begin{equation}
			\label{int_ubd1}
	\begin{split}
\psi(y)  = \int_{S_h} g_{S_h}(\cdot, y) (\diver \F+\mu)\, dx 
&=-  \int_{S_h} D_x g_{S_h}(x, y) \cdot \F(x) \,  dx +  \int_{S_h} g_{S_h}(\cdot, y) \mu\, dx \\
			& \leq  \int_{S_h} |(D^2 u)^{-1/2} D_x g_{S_h}(x, y) \cdot (D^2 u)^{1/2} \F|\,  dx\\
			&\quad\quad + \int_{S_h} g_{S_h}(\cdot, y) \mu\, dx:= \psi_1(y) + \psi_2(y).
\end{split}
	\end{equation}

We prove part (i) when $n\geq 3$ and $\mu=0$. 	By Theorem \ref{Green_int_thm} and the H\"{o}lder inequality in Lorentz spaces (Theorem \ref{thm: Hol_Lor}), we have
    \begin{equation}
    \label{int_ubd13}
    \begin{split}
        \psi(y) \leq \psi_1(y)&\leq C_0(n)\|(D^2 u)^{-1/2} D_x g_{S_h}(\cdot, y)\|_{L^{\frac{n}{n-1}, \infty}(S_h)} \|(D^2u)^{1/2} \F\|_{L^{n, 1}(S_h)}\\  &\leq C_1(n, \lambda, \Lambda)\|(D^2u)^{1/2} \F\|_{L^{n, 1}(S_h)}.
        \end{split}
    \end{equation}
    Since $y \in S_h$ is arbitrary, \eqref{gmp1}--\eqref{int_ubd13} establish part (i).
    
    \medskip
  We prove part (ii) by estimating $\psi_1(y)$ and $\psi_2(y)$. 
  Recall from \eqref{int_ubd1} that
   \[\psi_1(y)  = \int_{S_h} |(D^2 u)^{-1/2} D_x g_{S_h}(x, y) \cdot (D^2 u)^{1/2} \F|\,  dx.\]
   Since $q>n$, we have
   $\frac{q}{q-1} < \frac{n}{n-1}$. 
  Applying the H\"{o}lder inequality and Theorem \ref{Green_int_thm}, we have
    \begin{equation}
    \label{int_ubd23}
        \begin{split}
        \psi_1(y) &\leq \|(D^2u)^{-1/2}D_x g_{S_h}(\cdot, y)\|_{L^{\frac{q}{q-1}}(S_h)} \|(D^2u)^{1/2}  \F\|_{L^q(S_h)} \\ &\leq C_2(n, \lambda, \Lambda, q)\|(D^2u)^{1/2}  \F\|_{L^q(S_h)}h^{\frac{q-n}{2q}}.
                \end{split}
    \end{equation}

       \medskip 
       We estimate \[\psi_2(y)=\int_{S_h} g_{S_h}(\cdot, y) \mu\, dx.\] By Lemma \ref{Gvol}, there are constants $\eta(n, \lambda, \Lambda) \in (0, 1)$ and $\tau_0(n, \lambda, \Lambda)>1$ such that for any $y\in S_h$, we have
$S_u(y, 2\eta h) \Subset S_u(x_0, 2h)$   and 
\begin{equation}
\label{Gtlarge2}
     \{x \in S_h: g_{S_h}(x, y) > t\} \subset
     \begin{cases}
         S_u(y, 2\eta h 2^{-t / \tau_0}) &\text{if } n =2,\\
         S_u(y, (4\tau_0)^{\frac{2}{n-2}} t^{-\frac{2}{n-2}}) &\text{if } n \geq 3
     \end{cases}
     \quad \text{if }t> \tau_0 h^{-\frac{n-2}{2}}.
\end{equation}
Thus, by choosing $\tau_1= \tau_1(n, \lambda, \Lambda)$ large enough, we see that all sections in \eqref{Gtlarge2} are contained in $S_u(y, \eta h)\Subset S_u(x_0, 2h)$ when $t>\tau_1 h^{-\frac{n-2}{2}}=: T$. 

Consider $n\geq 3$.
Using the layer-cake formula (Lemma \ref{lcf}) and recalling \eqref{mump} and that $g_{S_h}(x, y)\geq 0$, we can estimate
        \begin{equation}
        \label{int_mu3}
           \begin{split}
           \psi_2(y)  
           &\leq \int_{S_h} g_{S_h}(x, y) \mu^{+}(x)\, dx \\&=\int_0^{\infty} \mu^+\big(\{x \in S_h: g_{S_h}(x, y)>t\}\big)\, dt\\
           &=\int_0^T \mu^{+}\big(\{x \in S_h: g_{S_h}(x, y)>t\}\big)\, dt + \int_T^{\infty} \mu^{+}\big(\{x \in S_h: g_{S_h}(x, y)>t\}\big)\, dt\\
           &\leq T \mu^{+}(S_h) +  \int_T^{\infty} \mu^{+} \big(S_u(y, (4\tau_0)^{\frac{2}{n-2}} t^{-\frac{2}{n-2}})\big)\, dt\\
           &\leq M_0T h^{\frac{n-2}{2} +\e} + C_3 M_0\int_T^{\infty} t^{-\frac{2}{n-2}(\frac{n-2}{2} +\e)}\, dt\\
           &\leq M_0T h^{\frac{n-2}{2} +\e} + C_4 M_0 T^{-\frac{2\e}{n-2}} \\
           &\leq C_5(n,\lambda,\Lambda,\e)M_0 h^{\e}.
                                \end{split}
    \end{equation}
    
    Consider $n=2$. Then $T=\tau_1$, and as above, we have 
          \begin{equation}
        \label{int_mu2}
           \begin{split}
           \psi_2(y)  &\leq T \mu^{+}(S_h) +  \int_T^{\infty} \mu^{+} \big(S_u(y, 2\eta h 2^{-t/\tau_0})\big)\, dt\\
           &\leq M_0T h^{\e} + C_6 h^{\e} M_0\int_T^{\infty} 2^{-t\e/\tau_0}\, dt\\
           &\leq C_7(\lambda,\Lambda,\e)M_0 h^{\e}.
                                        \end{split}
    \end{equation}
    Combining \eqref{int_ubd1} with \eqref{int_ubd23}, \eqref{int_mu3}, and  \eqref{int_mu2}, we obtain the asserted estimate in part (ii).
  The proposition is proved.
\end{proof}
\begin{rem}  For a nonnegative Radon measure $\mu$, we define the truncated Riesz potential $I_u^{\mu}$ with respect to $u$ by
\begin{equation}
I_u^\mu(y, h) =\int_0^h \frac{\mu(S_u(y, s))}{s^{\frac{n}{2}}} ds.
\end{equation}
Then, the estimates for $\psi_2(y)$ in the proof of Proposition \ref{intmax} (ii) can be expressed using $I_u^{\mu^+}$. Indeed, by the change of variables 
\begin{equation*}
s= \begin{cases} (4\tau_0)^{\frac{2}{n-2}} t^{-\frac{2}{n-2}} &\quad\text{when } n\geq 3,  \\ 
 2\eta h 2^{-t/\tau_0}& \quad \text{when } n=2, 
\end{cases}
\end{equation*}
we have in \eqref{int_mu3} and \eqref{int_mu2}
\begin{equation}
\psi_2(y) \leq Ch^{-\frac{n-2}{2}}  \mu^{+}(S_u(x_0, h)) + C I_u^{\mu^+} (y, \eta h),
\end{equation} 
where $C= C(n,\lambda,\Lambda)>0$ and $\eta=\eta(n,\lambda,\Lambda)\in (0, 1)$ is such that $S_u(y, 2\eta h)\Subset S_u(x_0, 2h)$.
\end{rem}
\medskip
Next, we state a boundary version of Proposition \ref{intmax}. \begin{prop}\label{thm: glb_mp}
    Assume $u$ and $\Omega$ satisfy \eqref{glb_1}--\eqref{glb_4}. Let 
     $V\subset\Omega$ be open, proper, and contained in a section $S_u(x_0, h)$ of $u$ of height $h>0$. 
    Assume $\F \in W^{1, n}(V; \R^n)$, $\mu\in L^n(\Omega)$, and $v \in W^{2, n}_{\text{loc}}(V) \cap C(\overline{V})$ satisfies
	\[
	-U^{ij}D_{ij} v \leq \diver \F +\mu  \quad \text{ in } V.
	\]
    \begin{enumerate}
        \item[(i)] Assume $n \geq 3$, $\mu=0$, and
        $ (D^2u)^{1/2} \F \in L^{n, 1}(V;\R^n)$. Then
        \[
            \sup_V v \leq \sup_{\partial V} v + C(n, \lambda, \Lambda, \rho) \big\| (D^2u)^{1/2} \F\big\|_{L^{n, 1}(V)}.
        \]
        \item[(ii)] Assume 
        $(D^2u)^{1/2} \F \in L^{q}(V;\R^n)$ for some $q > n$ and 
        there exist
        $M_0\geq 0$ and  $\e>0$ such that for $\mu^{+}=\max\{\mu, 0\}$, we have
          \[\mu^+(S_u(z, s)) \leq M_0 s^{\frac{n-2}{2} +\e}\quad \text{for all sections } S_u(z, s)\subset \overline{\Omega}.\]
            Then,    
            \[
            	\sup_V v \leq \sup_{\partial V} v + C_\ast(n, \lambda, \Lambda,q, \rho) \big\| (D^2u)^{1/2} \F\big\|_{L^q(V)} h^{\frac{q-n}{2q}} + C_\star(n, \lambda, \Lambda, \rho, \e)M_0 h^{\e}.
            \]
     \end{enumerate}
\end{prop}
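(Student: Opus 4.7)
The plan is to mimic the argument for Proposition \ref{intmax}, replacing the interior tools (Theorem \ref{Green_int_thm} and Lemma \ref{Gvol}) by their global counterparts (Theorem \ref{Green_gl_thm} and Lemma \ref{lem: glpG}). First, since $V$ is open and proper, there exists $\psi\in W_{\mathrm{loc}}^{2,n}(V)\cap W_0^{1,2}(V)\cap C(\overline V)$ solving
\[
-U^{ij}D_{ij}\psi = \diver\F+\mu\ \text{ in }V,\qquad \psi=0\ \text{ on }\partial V,
\]
whose existence follows from standard theory for bounded measurable coefficients (invoking e.g.\ \cite[Theorem 9.30]{GT}), noting that the eigenvalues of $U=(\det D^2 u)(D^2 u)^{-1}$ are bounded between positive constants on compact subsets of $V$. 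The ABP maximum principle applied to $v-\psi\in W^{2,n}_{\mathrm{loc}}(V)\cap C(\overline V)$ then yields
\[
\sup_V v\le \sup_{\partial V}v + \sup_V\psi.
\]

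Next, for $y\in V$, I would use the Green's function representation formula from Remark \ref{rem: G_prop}(2) together with integration by parts (legitimate since $\F\in W^{1,n}(V;\R^n)$ and $g_V(\cdot,y)\in W^{1,q}_0(V)$ for $q<n/(n-1)$) to obtain
\[
\psi(y)=-\int_V D_x g_V(x,y)\cdot\F(x)\,dx+\int_V g_V(x,y)\mu(x)\,dx=:\psi_1(y)+\psi_2(y).
\]
Estimate $\psi_1$ by rewriting $D_x g_V\cdot\F=[(D^2u)^{-1/2}D_x g_V]\cdot[(D^2u)^{1/2}\F]$. For part (i), the Hölder inequality in Lorentz spaces (Theorem \ref{thm: Hol_Lor}) with exponents $(n/(n-1),\infty)$ and $(n,1)$ combined with \eqref{glb_wLp} of Theorem \ref{Green_gl_thm} yields
\[
|\psi_1(y)|\le C(n,\lambda,\Lambda,\rho)\|(D^2u)^{1/2}\F\|_{L^{n,1}(V)}.
\]
For part (ii), since $q>n$ gives $q/(q-1)\in(1,n/(n-1))$, the ordinary Hölder inequality with \eqref{glb_D_Lp} of Theorem \ref{Green_gl_thm} produces
\[
|\psi_1(y)|\le C(n,\lambda,\Lambda,\rho,q)\|(D^2u)^{1/2}\F\|_{L^q(V)}\,h^{(q-n)/(2q)},
\]
using that $V\subset S_u(x_0,h)$.

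The main point left is to bound $\psi_2$ when $\mu\neq 0$ (part (ii)). Since $g_V\ge 0$, we have $\psi_2(y)\le\int_V g_V(\cdot,y)\,\mu^+\,dx$, and the layer-cake formula (Lemma \ref{lcf}) gives
\[
\psi_2(y)\le\int_0^\infty \mu^+\bigl(\{x\in V:g_V(x,y)>t\}\bigr)\,dt.
\]
By Lemma \ref{lem: glpG}(i), there exists $C_\ast=C_\ast(n,\lambda,\Lambda,\rho)$ such that for $t>T:=C_\ast$ the superlevel set $\{g_V(\cdot,y)>t\}$ is contained in a section of $u$ of height $C_\ast t^{-2/(n-2)}$ (or $h2^{-t/\bar C}$ if $n=2$), and these sections lie in $\overline\Omega$, so the hypothesis $\mu^+(S_u(z,s))\le M_0 s^{(n-2)/2+\varepsilon}$ applies. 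Splitting $\int_0^\infty=\int_0^T+\int_T^\infty$, bounding the first piece crudely by $T\mu^+(V)\le T\cdot M_0 h^{(n-2)/2+\varepsilon}$ and computing the second via the measure growth exactly as in \eqref{int_mu3}--\eqref{int_mu2}, both integrals are controlled by $C(n,\lambda,\Lambda,\rho,\varepsilon)M_0 h^\varepsilon$ (here we use $h\le\mathrm{diam}(\Omega)^2\lesssim \rho^{-2}$ to absorb the first term into the desired form). Taking the supremum over $y\in V$ and combining with the bound on $\psi_1$ gives the conclusion. The principal technical obstacle is verifying that the sections arising in the layer-cake estimate are of the form $S_u(z,s)\subset\overline\Omega$ for which the growth hypothesis on $\mu^+$ was assumed; this is precisely the role of Lemma \ref{lem: glpG}(i), which provides the necessary geometric control up to the boundary under the global structural assumptions \eqref{glb_1}--\eqref{glb_4}.
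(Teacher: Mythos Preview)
Your approach is correct and matches the paper's own proof, which simply says the argument of Proposition~\ref{intmax} goes through with Theorem~\ref{Green_gl_thm} and Lemma~\ref{lem: glpG} in place of their interior analogues.

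There is, however, one technical oversight in your layer-cake estimate for $\psi_2$ when $n\ge 3$. With the split point $T=C_\ast$ (the fixed constant from Lemma~\ref{lem: glpG}(i)), the tail integral
\[
\int_{C_\ast}^\infty \mu^+\bigl(S_u(y,C_\ast t^{-2/(n-2)})\bigr)\,dt
\le C M_0\int_{C_\ast}^\infty t^{-1-2\varepsilon/(n-2)}\,dt
= C M_0\, C_\ast^{-2\varepsilon/(n-2)}
\]
is only $CM_0$, not $CM_0 h^\varepsilon$. Since the proposition is later applied in Proposition~\ref{prop: bd_int_est} with $h=\delta_2^{3/2}$ small, the $h^\varepsilon$ decay is essential and cannot be dropped. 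The fix is to split instead at $T=\max\{C_\ast,\,c\,h^{-(n-2)/2}\}$: on $[C_\ast,\,c\,h^{-(n-2)/2}]$ use the trivial inclusion $\{g_V>t\}\subset V\subset S_u(x_0,h)$ to bound $\mu^+$ by $M_0 h^{(n-2)/2+\varepsilon}$, and for larger $t$ use Lemma~\ref{lem: glpG}(i). Both pieces then yield $CM_0 h^\varepsilon$, exactly as in~\eqref{int_mu3}. (For $n=2$ your split at $T=C_\ast$ already works because the section height in \eqref{gbGtlarge} carries the factor $h$.)

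A smaller point: your justification ``$h\le\mathrm{diam}(\Omega)^2$'' is not literally true, since one may choose $h$ arbitrarily large and still have $S_u(x_0,h)=\overline\Omega$. The correct reduction is that $\overline\Omega\subset S_u(x_0,M)$ for a structural $M=M(n,\lambda,\Lambda,\rho)$ (see the remark after Theorem~\ref{Green_gl_thm}), and since the right-hand side of the conclusion is increasing in $h$, one may assume $h\le M$ without loss of generality.
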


\begin{proof} Our proof is similar to that of Proposition \ref{intmax}. It uses the representation formula \eqref{G_rep}, 
 Lemma \ref{lem: glpG} instead of Lemma \ref{Gvol}
for part (ii), and Theorem \ref{Green_gl_thm} instead of Theorem \ref{Green_int_thm}, so we skip it.
	\end{proof}
As discussed after the statement of Theorem \ref{Green_gl_thm}, the above maximum principle is applicable when $V=\Omega$ where the height is bounded by a constant $M(n, \lambda, \Lambda,\rho)$.

\section{Interior regularity for linearized Monge--Amp\`ere equations}
\label{sec_Har}
In this section, we prove an interior Harnack inequality and interior H\"older estimates for solutions to \eqref{eqdiv}.
\subsection{Interior Harnack inequality} In this subsection, we prove Theorem \ref{int_Har}. The main tools are
 uniform bounds for solutions to \eqref{eqdiv} with zero boundary value
 and the Caffarelli--Guti\'{e}rrez Harnack inequality 
for the linearized Monge--Amp\`{e}re equation \cite[Theorem 5]{CG97}.

\begin{thm}
[Caffarelli--Guti\'{e}rrez Harnack inequality] 
\label{thm: CG_Harnack}
Let $u \in C^2(\Omega)$ be a convex function satisfying \eqref{MAu} in a domain $\Omega$ in $\mathbb{R}^n$. Let $v \in W_{\text {loc }}^{2, n}(\Omega)$ be a nonnegative solution of the linearized Monge--Amp\`{e}re equation
$U^{i j} D_{i j} v=0$ in a section $S_u\left(x_0, 2 h\right) \Subset \Omega$.
Then
\[
\sup _{S_u\left(x_0, h\right)} v \leq C(n, \lambda, \Lambda) \inf _{S_u\left(x_0, h\right)} v.
\]
\end{thm}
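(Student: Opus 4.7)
The plan is to mirror the classical Krylov--Safonov / Caffarelli--Gutiérrez strategy, with the crucial observation that the operator $U^{ij}D_{ij}$ is affine invariant: if $T\in SL(n,\R)$ and $\tilde u(x)=u(Tx)$, then the cofactor matrix of $\tilde u$ is $T^{-1}U(Tx)T^{-t}\det T^{-1}\cdot\det T^2$ up to normalization, and one still has $\lambda\leq\det D^2\tilde u\leq\Lambda$. Combined with John's lemma, after an affine normalization each section $S_u(x_0,h)$ may be replaced by a domain $\Omega^*$ satisfying $B_{c(n)}\subset \Omega^*\subset B_{C(n)}$ while preserving \eqref{MAu}. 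All estimates may therefore be proved on such normalized sections, and then pulled back.

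First I would prove a quantitative \emph{critical density} lemma: there exist $\sigma, \theta\in(0,1)$ depending only on $n,\lambda,\Lambda$ such that whenever $v\geq 0$ is a supersolution to $U^{ij}D_{ij}v=0$ in a normalized section $\Omega^*$ with $\inf_{S^{1/2}}v\leq 1$ (where $S^{1/2}$ is the section of half-height), one has
\[
\bigl|\{x\in S^{1/2}: v(x)\leq M\}\bigr|\geq \theta\,|S^{1/2}|,
\]
for some universal $M$. This rests on an ABP-type estimate for $U^{ij}D_{ij}$: the quantity $\int |\det D^2 \Gamma_v|$ taken over the convex envelope contact set controls $\sup v$ via the $\mathrm{MA}$-area formula. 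Here the barrier $w(x)=u(x)-\ell(x)-h$ satisfies $U^{ij}D_{ij}w=n\det D^2 u\in[n\lambda,n\Lambda]$ and vanishes on $\partial S^{1/2}$, providing a natural comparison function; this replaces the Euclidean paraboloids used in the uniformly elliptic case. The main obstacle is here: one must run the ABP argument \emph{intrinsically with respect to the Monge--Ampère geometry}, so that the distortion of sections under affine maps is absorbed into the universal constants.

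Given the critical density lemma, I would upgrade it via a Calderón--Zygmund--Vitali decomposition using sections (Caffarelli--Gutiérrez's engulfing property shows that sections satisfy a doubling/engulfing property suitable for such a decomposition). Iterating the critical density estimate on dyadic sub-sections produces power decay of the distribution function:
\[
\bigl|\{x\in S_u(x_0,h): v(x)>N^k\}\bigr|\leq (1-\theta)^k |S_u(x_0,h)|,
\]
which is equivalent to a weak $L^\varepsilon$ bound $\|v\|_{L^{\varepsilon,\infty}(S_u(x_0,h))}\leq C\inf_{S_u(x_0,h/2)} v$ for some universal $\varepsilon>0$, proving one half of Harnack.

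The reverse inequality (a local maximum principle $\sup_{S_u(x_0,h)}v\leq C(\fint_{S_u(x_0,2h)} v^\varepsilon)^{1/\varepsilon}$ for subsolutions, or equivalently $L^\varepsilon\to L^\infty$ bound) would follow by an analogous iteration: one shows that if $v\geq 0$ is a subsolution with $v(x_0)\geq 1$, then one can find a slightly larger section where $v$ is bounded below on a positive-measure set, then one bootstraps using the engulfing property. Composing the two $L^\varepsilon$ estimates yields $\sup_{S_u(x_0,h)} v\leq C \inf_{S_u(x_0,h)} v$ for solutions, which is the claim. The truly hard step is the first one (critical density plus engulfing), since every subsequent step is a quasi-standard covering/iteration scheme once the affine-invariant ABP machinery is in place.
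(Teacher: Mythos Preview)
The paper does not prove this theorem at all: Theorem~\ref{thm: CG_Harnack} is simply quoted from Caffarelli--Guti\'errez \cite[Theorem 5]{CG97} as a black-box tool, and is then applied in the proof of Theorem~\ref{int_Har}. So there is no ``paper's own proof'' to compare against.

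That said, your sketch is a faithful outline of the original Caffarelli--Guti\'errez argument: affine normalization via John's lemma, an intrinsic ABP estimate leading to a critical density lemma, the engulfing property of sections to run a Calder\'on--Zygmund covering, power decay of the distribution function giving the weak-$L^\varepsilon$ estimate for supersolutions, and a companion local maximum principle for subsolutions. One minor imprecision: your formula for how the cofactor matrix transforms under $T\in SL(n,\R)$ is garbled (the correct statement is that if $\tilde u(x)=u(Tx)$ then $\tilde U(x) = (\det T)^2\, T^{-1} U(Tx) (T^{-1})^t$, which for $\det T=1$ is just $T^{-1}U(Tx)(T^{-1})^t$), but the conclusion you draw from it---that the pinching $\lambda\le\det D^2\tilde u\le\Lambda$ is preserved and the equation remains of the same form---is correct. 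Since the paper treats this result as background, your write-up goes well beyond what the paper itself contains.
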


We are now ready to prove Theorem \ref{int_Har}.
\begin{proof}[Proof of Theorem \ref{int_Har}]
    Denote $S_h:= S_u(x_0, h)$ and $S_{h/2}:= S_u(x_0, h/2)$. Let $w \in W^{2, n}_{\text{loc}}(S_{h}) \cap C(\overline{S_h})$ satisfy
    \[
            U^{ij} D_{ij} w = \diver \F  +\mu \; \quad\text{ in } S_h \quad \text{ and }\quad
            w = 0 \; \quad \text{ on } \partial S_h;
    \]
    see \cite[Theorem 9.30]{GT}.
    Notice that $v - w \in W^{2, n}_{\text{loc}}(S_h) \cap C(\overline{S_h})$ satisfies
    \[
            U^{ij} D_{ij} (v - w) = 0 \; \quad \text{ in } S_h\quad\text{ and }
            v - w = v \geq 0 \; \text{ on } \partial S_h.
    \]
    Hence, the ABP maximum principle shows that $v - w \geq 0$ in $\overline{S_h}$. Then, applying the Caffarelli--Guti\'{e}rrez Harnack inequality (Theorem \ref{thm: CG_Harnack}) to $v - w$, we find
    \[
        \sup_{S_{h/2}} (v - w) \leq C_1(n, \lambda, \Lambda) \inf_{S_{h/2}} (v - w).
    \]
    Thus 
    \begin{equation}\label{intH1}
            \sup_{S_{h/2}} v 
                 \leq (1+ C_1)\big( \inf_{S_{h/2}} v +      
                \| w\|_{L^\infty(S_h)}\big).
    \end{equation}
    From Proposition \ref{intmax} applied to $w$ and $-w$, we have
    \[
        \|w\|_{L^\infty(S_h)} \leq 
        \begin{cases}
        	C_2(n,\lambda,\Lambda)\| (D^2u)^{1/2} \F\|_{L^{n, 1}(S_h)} \quad &\text{ in part (i)},\\
        	C_3\| (D^2u)^{1/2} \F\|_{L^{q}(S_h)}h^{\frac{q-n}{2q}} + C_4 M_0h^{\e}\quad &\text{ in part (ii)}, 
        \end{cases}
    \]
    where $C_2 = C_2(n, \lambda, \Lambda)> 0$, $C_3 = C_3(n, \lambda, \Lambda, q)> 0$, and $C_4 = C_4(n, \lambda, \Lambda, \e)> 0$.
    Combining this with \eqref{intH1} completes the proof of the theorem.
\end{proof}

\subsection{Interior H\"{o}lder estimates} In this subsection, using Theorem \ref{int_Har}, we give a new proof of the following theorem, first obtained by Wang \cite[Theorem 1.5]{Wang25} and Cui--Wang--Zhou \cite[Theorem 1.2]{CWZ},  on 
 interior H\"{o}lder estimates for \eqref{eqdiv}.

\begin{thm}
\label{thm: int}
 	Let $\Omega \subset \R^n$ be a convex domain and $u\in C^3(\Omega)$ be a convex function satisfying \eqref{MAu}. Let
	$S_u(x_0, 2h_0) \Subset \Omega$ where $x_0\in\Omega$ and $h_0>0$ and $v\in W^{2, n}_{\text{loc}}(S_u(x_0, 2h_0))\cap C(\overline{S_u(x_0, 2h_0)})$ be a solution to \[U^{ij}D_{ij}v = \diver \F +\mu  \quad\text{in}\quad S_u(x_0, 2h_0),\] 
	where $\F \in  W^{1, n}_{\text{loc}}(\Omega; \R^n)$ with
        $(D^2u)^{1/2} \F \in L^{q}(S_u(x_0, 2h_0);\R^n)$ for some $q > n$, and $\mu\in L^{n}(S_u(x_0, 2h_0))$ and 
        there exist $M_0\geq 0$ and  $\e>0$ such that 
  \[|\mu|(S_u(z, s)) \leq M_0 s^{\frac{n-2}{2} +\e}\quad \text{for all sections } S_u(z, s)\Subset S_u(x_0, 2h_0).\]
  Letting
  \[ M:= \sup_{x, y\in S_u(x_0, h_0)} |Du(x) - Du(y)|\quad\text{and}\quad \tilde M:=h_0^{\frac{q-n}{2q}}\|(D^2u)^{1/2}  \F\|_{L^q(S_u(x_0, h_0))} + M_0 h_0^{\e}. \]
  Then
\begin{equation}\label{int_est_qmu}
	|v(y) - v(z)|  \leq C\big( \|v\|_{L^\infty(S_u(x_0, h_0))} + \tilde M \big)h_0^{-\gamma}M^{\gamma}|y - z|^{\gamma}\quad\quad\text{for all } y, z \in S_u(x_0, h_0/2),
\end{equation}
where $C = C(n, \lambda, \Lambda, q, \e) > 0$ and $\gamma=\gamma(n, \lambda, \Lambda, q, \e)\in (0,1)$.
\end{thm}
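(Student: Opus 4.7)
My plan is to bootstrap the Harnack inequality of Theorem \ref{int_Har}(ii) into a geometric decay of oscillation on sections centered at points of $S_u(x_0,h_0/2)$, and then convert section heights to Euclidean distances using the gradient oscillation $M$.

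\textbf{Step 1 (one-step oscillation decay).} Fix $y\in S_u(x_0,h_0/2)$. By the inclusion property of interior sections (Theorem \ref{thm: inclusion}) applied with $t=2h_0$, $r=1/4$, $s=1/2$, there is a constant $c_\ast=c_\ast(n,\lambda,\Lambda)\in(0,1)$ such that $S_u(y,2h)\Subset S_u(x_0,h_0)$ whenever $0<h\leq c_\ast h_0$. For every such $h$, I will apply Theorem \ref{int_Har}(ii) separately to the two nonnegative solutions $v-\inf_{S_u(y,h)}v$ and $\sup_{S_u(y,h)}v-v$ on $S_u(y,h)$, using $\|(D^2u)^{1/2}\mathbf{F}\|_{L^q(S_u(y,h))}\leq \|(D^2u)^{1/2}\mathbf{F}\|_{L^q(S_u(x_0,h_0))}$ and the fact that the measure-growth hypothesis on $\mu$ is inherited on all subsections. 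Adding the two Harnack inequalities and rearranging the $\sup/\inf$ terms in the standard way yields
\[\mathrm{osc}_{S_u(y,h/2)}\, v \leq \theta\,\mathrm{osc}_{S_u(y,h)}\, v + C\,(h/h_0)^{\beta}\,\tilde M,\]
with $\theta=(C_H-1)/(C_H+1)\in(0,1)$ depending only on $(n,\lambda,\Lambda)$ (where $C_H$ is the Harnack constant) and $\beta:=\min\{(q-n)/(2q),\,\epsilon\}>0$.

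\textbf{Step 2 (iteration).} Setting $h_k:=2^{-k}c_\ast h_0$, iterating Step 1 gives
\[\mathrm{osc}_{S_u(y,h_k)}\, v \leq \theta^k\,\mathrm{osc}_{S_u(y,h_0)}\, v + C\,\tilde M\sum_{j=0}^{k-1}\theta^{k-1-j}2^{-j\beta}.\]
Choosing $\gamma=\gamma(n,\lambda,\Lambda,q,\epsilon)\in(0,\beta)$ with $2^{-\gamma}\geq\theta$, the geometric sum is bounded by $C\,2^{-k\gamma}$, so
\[\mathrm{osc}_{S_u(y,h_k)}\, v \leq C\,2^{-k\gamma}\bigl(\|v\|_{L^\infty(S_u(x_0,h_0))}+\tilde M\bigr).\]

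\textbf{Step 3 (heights to distances).} For $y,z\in S_u(x_0,h_0/2)$, the segment $[y,z]$ stays in this convex section, and the fundamental theorem of calculus gives
\[u(z)-u(y)-Du(y)\cdot(z-y)=\int_0^1\bigl(Du(y+t(z-y))-Du(y)\bigr)\cdot(z-y)\,dt\leq M|z-y|,\]
so $z\in S_u(y,M|z-y|)$. If $M|z-y|>c_\ast h_0$, the trivial bound $|v(y)-v(z)|\leq 2\|v\|_{L^\infty(S_u(x_0,h_0))}$ already yields \eqref{int_est_qmu}; otherwise I pick the integer $k\geq 0$ with $h_{k+1}<M|z-y|\leq h_k$ and apply Step 2 to conclude
\[|v(y)-v(z)|\leq \mathrm{osc}_{S_u(y,h_k)}\, v \leq C\,(M|z-y|/h_0)^{\gamma}\bigl(\|v\|_{L^\infty(S_u(x_0,h_0))}+\tilde M\bigr),\]
which is precisely \eqref{int_est_qmu}.

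\textbf{Expected main obstacle.} The delicate point is Step 1: checking that the hypotheses of Theorem \ref{int_Har}(ii) are satisfied uniformly on every subsection $S_u(y,h)$ with $h\leq c_\ast h_0$ as $y$ varies over $S_u(x_0,h_0/2)$, and that the error term $C_\ast\|(D^2u)^{1/2}\mathbf{F}\|_{L^q(S_u(y,h))}h^{(q-n)/(2q)}+C_\star M_0 h^\epsilon$ admits the uniform upper bound $C(h/h_0)^{\beta}\tilde M$ driving the geometric iteration. Monotonicity of $L^q$ norms and the fact that $S_u(z,s)\Subset S_u(y,2h)\subset S_u(x_0,2h_0)$ make the required bookkeeping work, but one must invoke these reductions carefully at each scale. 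Once Step 1 is established, Steps 2 and 3 are routine.
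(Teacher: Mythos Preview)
Your proposal is correct and follows essentially the same route as the paper: derive a one-step oscillation decay from Theorem \ref{int_Har}(ii), iterate it geometrically, and then convert section heights to Euclidean distances via $z\in S_u(y,M|y-z|)$; the paper applies the Harnack inequality once (to $v-\inf v$, obtaining $\theta=1-C^{-1}$) rather than twice, but this is an immaterial variant. One small bookkeeping slip: with the parameters $t=2h_0$, $r=1/4$, $s=1/2$ in Theorem \ref{thm: inclusion} you would need $S_u(x_0,4h_0)\Subset\Omega$, which is not assumed --- take instead $t=h_0$, $r=1/2$, $s=3/4$ (exactly as the paper does with its constant $\tau$) to obtain $S_u(y,2c_\ast h_0)\subset S_u(x_0,3h_0/4)\Subset S_u(x_0,h_0)$.
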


\begin{rem} Some remarks on Theorem \ref{thm: int} are in order.
\begin{enumerate}
\item If $\F=0$ in Theorem  \ref{thm: int}, the term $ \|v\|_{L^\infty(S_u(x_0, h_0))}$ in \eqref{int_est_qmu} can be replaced by $ \|v\|_{L^p(S_u(x_0, h_0))}$ for any $p>0$ in \cite{CWZ}.
\item From Caffarelli's $C^{1, \alpha}$ estimates \cite{Caff91} (see also \cite[Corollary 5.22]{Le24}), we have
	\begin{equation}
	\label{Mbound}
		M:= \sup_{x, y\in S_u(x_0, h_0)} |Du(x) - Du(y)| \leq C_1(n, \lambda, \Lambda, \diam(S_u(x_0, 2h_0))) h_0^{-\alpha}, 
	\end{equation}
    where $\alpha= \alpha(n, \lambda, \Lambda) > 0$. 
\end {enumerate}
\end{rem}
\begin{proof}[Proof of Theorem \ref{thm: int}] We divide the proof into two steps.

\medskip
\noindent
{\bf Step 1:} Oscillation decay of solutions to \eqref{eqdiv} in small sections. We claim that there exist $C > 0$ and $\gamma \in (0, 1)$ depending only on $n, \lambda, \Lambda, q, \e$ such that for any $0 < h \leq h_0$, we have
    \begin{equation}
    \label{osc_est}
        \underset{S_u(x_0, h)}{\osc} v \leq C\big(\frac{h}{h_0}\big)^\gamma\big( \underset{S_u(x_0, h_0)}{\osc} v + h_0^{\frac{q-n}{2q}}\|(D^2u)^{1/2}  \F\|_{L^q(S_u(x_0, h_0))} + M_0 h_0^{\e} \big).
    \end{equation}

    Denote $S_h:= S_u(x_0, h)$.  Since $\bar v:= v- \inf_{S_h} v$ is a nonnegative solution to \[U^{ij} D_{ij}\bar v=  \diver \F + \mu \quad\text{in}\quad S_h,\] we can apply Theorem \ref{int_Har}
    to obtain for some $C(n,\lambda,\Lambda)>3$
    \[C^{-1}\sup_{S_{h/2}} \bar v \leq \inf_{S_{h/2}} \bar v +C_1(n, \lambda, \Lambda, q)\big\|(D^2u)^{1/2} \F\big\|_{L^q(S_h)}h^{\frac{q-n}{2q}} + C_2(n, \lambda, \Lambda, \e)M_0h^{\e}. \]
    Thus, 
	\begin{equation}\label{ineq: int_osc_2}
	\begin{split}
	    \underset{S_{h/2}}{\osc}  v = \sup_{S_{h/2}} \bar v  - \inf_{S_{h/2}} \bar v  &\leq (1 - C^{-1}) \sup_{S_{h/2}} \bar v  + C_1h^{\frac{q-n}{2q}}\big\|(D^2u)^{1/2} \F\big\|_{L^q(S_h)} + C_2 M_0h^{\e}\\
	    &\leq \beta \underset{S_h}{\osc}  v + C_1h^{\frac{q-n}{2q}}\big\|(D^2u)^{1/2} \F\big\|_{L^q(S_h)}h^{\frac{q-n}{2q}} + C_2M_0h^{\e},
	    \end{split}
	\end{equation}
	where we can choose \[\beta= \beta(n, \lambda, \Lambda, q,\e)\in (1-C^{-1}, 1) \quad\text{such that}\quad  \min\big\{\beta 2^{\frac{q-n}{2q}}, \beta 2^{\e}\big\}>1.\]
       A standard iteration argument (see, for example, the proof of \cite[Theorem 12.13]{Le24})
   gives \eqref{osc_est} with $\gamma =\log(1/\beta)\log 2\in (0, 1)$ as claimed.

\medskip
\noindent
{\bf Step 2:} Interior H\"{o}lder estimates. 
	Let $y, z \in S_{h_0/2}$. By the inclusion property of sections (Theorem \ref{thm: inclusion}), there exists $\tau = \tau(n, \lambda, \Lambda)>0$, such that $S_u(y, 2\tau h_0) \subset S_{h_0}\Subset \Omega$. Then, 
	we obtain the same estimates as \eqref{osc_est} in $S_u(y, h)$ for $h \leq \tau h_0$. We consider two cases: 

\medskip	
	\textbf{Case 1:}  $z \in S_u(y, \tau h_0)$. Choose $0 < r \leq \tau h_0$, such that $z \in S_u(y, r)\setminus S_u(y, r/2)$. By the definition of sections and the mean value theorem, we have
	\[
		\frac{r}{2}\leq u(z) - u(y) - Du(y)\cdot (z - y) \leq \sup_{S_{h_0}} |Du(\cdot) - Du(y)|\cdot |y - z|\leq M|y-z|.
	\]
	Thus $2M|y-z| \geq r$. Then, using \eqref{osc_est},
	\begin{equation}
	\label{Hcase1}
		\begin{aligned}
			|v(y) - v(z)| \leq \underset{S_u(y, r)}{\osc} v 
			& \leq C\big( \|v\|_{L^\infty(S_u(y, \tau h_0))} + \tilde M \big)\tau^{-\gamma}h_0^{-\gamma}r^\gamma\\
			& \leq C_3 \big( \|v\|_{L^\infty(S_{h_0})} + \tilde M\big) h_0^{-\gamma}M^{\gamma}|y-z|^\gamma,
		\end{aligned}
	\end{equation}
	where $C_3 = C_3(n, \lambda, \Lambda, q,\e)$ can be chosen such that $C_3>2\tau^{-\gamma}$.

	\medskip
	\textbf{Case 2:} $z \notin S_u(y, \tau h_0)$.  As in Case 1, we have $\tau h_0 \leq M|y-z|$. Then, clearly \eqref{Hcase1} holds from 
	$|v(y) - v(z)| \leq 2 \|v\|_{L^\infty(S_{h_0})}$ and the choice of $C_3$. 
	
	Combining these two cases, we obtain \eqref{int_est_qmu} as asserted.
\end{proof}

\section{Global  H\"older estimates for linearized Monge--Amp\`ere equations}
\label{sec_glH}

In this section, we give a proof of Theorem \ref{glb_holder}.

First, we prove H\"{o}lder estimates for solutions to \eqref{eqdiv} at the boundary.
\begin{prop}\label{prop: bd_int_est}
	Assume that $u$ and $\Omega$ satisfy \eqref{glb_1}--\eqref{glb_4}. Let $\F\in W^{1, n}(\Omega; \R^n)$, $\mu\in L^n(\Omega)$, and 
	$v \in  W^{2, n}_{\text{loc}}(\Omega)\cap C(\overline{\Omega})$ be the solution to 
	\[
	U^{ij}D_{ij} v = \diver \F + \mu \quad \text{ in }  \Omega, \quad \; v = \phi \quad \text{ on } \partial \Omega,
	\]
	where $\phi \in C^\alpha(\partial \Omega)$ for some $\alpha \in (0, 1)$. Assume $(D^2u)^{1/2} \F \in L^q(\Omega;\R^n)$ for some $q > n$, and 
	there exist
	$M_0\geq 0$ and  $\e_0>0$ such that
  \[|\mu|(S_u(z, s)) \leq M_0 s^{\frac{n-2}{2} +\e_0}\quad \text{for all sections } S_u(z, s)\subset \overline{\Omega}.\]
  Let 
  \[\alpha_0:= \min \Big\{\alpha, \frac{3(q-n)}{8q}, \frac{3\e_0}{4}\Big\},\quad \alpha_1:= \frac{\alpha_0}{\alpha_0 + 3n}.\]
Then, there exist constants $\delta, C > 0$ depending only on $n, \lambda, \Lambda, \alpha, \rho$, $q$ and $\e_0$, such that 
	\[
	|v(x) - v(x_0)| \leq C \Big( \|\phi\|_{C^{\alpha}(\partial \Omega)} + \|(D^2u)^{1/2} \F\|_{L^q(\Omega)}+ M_0\Big)|x - x_0|^{\alpha_1}\,\text{if }x_0 \in \partial \Omega,\, x\in \Omega \cap B_{\delta}(x_0).
	\] 
\end{prop}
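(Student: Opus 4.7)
The plan is to establish the boundary Hölder estimate at a fixed point $x_0 \in \partial \Omega$ by combining the global maximum principle (Proposition~\ref{thm: glb_mp}) with Savin's boundary localization theorem and a dyadic barrier iteration on boundary sections $S_u(x_0, h)$. Set $u_{x_0}(x) := u(x) - u(x_0) - Du(x_0) \cdot (x - x_0)$, so that $u_{x_0} \geq 0$ on $\overline{\Omega}$, $u_{x_0}(x_0) = 0$, and $S_u(x_0, h) = \{u_{x_0} < h\} \cap \overline{\Omega}$. The quadratic separation \eqref{glb_4} together with Savin's boundary localization theorem yields two-sided Euclidean inclusions of the form
\[
B_{c h^{1/2}}(x_0) \cap \overline{\Omega} \subset S_u(x_0, h) \subset B_{C h^{\sigma}}(x_0)
\]
for some $\sigma > 0$ depending only on $n, \lambda, \Lambda, \rho$ (possibly with a logarithmic factor in dimension two), valid for sufficiently small $h$.

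Next, I would apply Proposition~\ref{thm: glb_mp} on $V = S_u(x_0, h)$ to the comparison function $w := v - \phi(x_0) - \kappa u_{x_0}$, where $\kappa > 0$ is a parameter to be tuned. The divergence-free property of $U$ gives $L_u u_{x_0} = L_u u = n \det D^2 u \in [n\lambda, n\Lambda]$, so $w$ satisfies $L_u w = \diver \F + (\mu - \kappa n \det D^2 u)$, where the augmented measure still has the required growth on sections. The boundary $\partial V$ splits into the portion on $\partial \Omega$, where $v = \phi$ and $|\phi(x) - \phi(x_0)| \leq C \|\phi\|_{C^\alpha(\partial\Omega)} h^{\sigma \alpha}$ by the upper Euclidean inclusion, and the interior ceiling $\partial S_u(x_0, h) \cap \Omega$, where $u_{x_0} = h$. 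Choosing $\kappa$ so that $\kappa h$ absorbs the oscillation of $v$ at the ceiling scale, and invoking the source estimates of Proposition~\ref{thm: glb_mp} together with the analogous argument for $-v$, produces a contraction of the form
\[
\osc_{S_u(x_0, h/2)} v \leq \theta\, \osc_{S_u(x_0, h)} v + C \big( \|\phi\|_{C^\alpha(\partial\Omega)} h^{\sigma \alpha} + \|(D^2u)^{1/2}\F\|_{L^q(\Omega)} h^{\frac{q-n}{2q}} + M_0 h^{\e_0} \big),
\]
with $\theta \in (0, 1)$ depending only on $n, \lambda, \Lambda, \rho$.

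Iterating the contraction on a dyadic sequence $h_k = 2^{-k} h_0$ gives an oscillation estimate
\[
\osc_{S_u(x_0, h)} v \leq C \big( \|\phi\|_{C^\alpha(\partial\Omega)} + \|(D^2u)^{1/2}\F\|_{L^q(\Omega)} + M_0 \big) h^{\alpha_0^\ast}
\]
for an intermediate exponent $\alpha_0^\ast$, where the factors $3/4$ and $3/8$ in the definition of $\alpha_0$ come from requiring a strict gap between the geometric decay rate $\theta$ and the arithmetic source exponents in the iteration. Finally, converting to a Euclidean Hölder estimate proceeds via the lower inclusion $B_{c h^{1/2}}(x_0) \cap \overline{\Omega} \subset S_u(x_0, h)$: for $x \in \Omega \cap B_\delta(x_0)$, choose the smallest $h$ with $x \in S_u(x_0, h)$ and combine the resulting $h$-Hölder estimate with an interpolation against the global $L^\infty$ bound on $v$ (obtained by applying Proposition~\ref{thm: glb_mp} to $V = \Omega$). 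The specific form $\alpha_1 = \alpha_0/(\alpha_0 + 3n)$ arises naturally from this interpolation, where the factor $3n$ reflects the combined effect of the Euclidean diameter control from Savin's theorem and the volume estimates in Lemma~\ref{vol_est}.

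The main obstacle is the interplay between the barrier coefficient $\kappa$, the geometric contraction rate $\theta$, and the arithmetic source exponents: $\kappa$ must be calibrated on each dyadic scale to ensure $\theta < 1$ without enlarging the effective measure beyond the growth assumption on $\mu$, while the final interpolation step needs the precise bookkeeping of exponents to produce $\alpha_1$ as stated. Once this balance is achieved, the remainder of the argument is a standard iteration.
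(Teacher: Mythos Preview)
Your proposal has a genuine gap in the barrier step. The function $u_{x_0}$ satisfies $L_u u_{x_0} = n\det D^2 u \geq n\lambda > 0$, so it is a strict \emph{subsolution}, not a supersolution. When you form $w = v - \phi(x_0) - \kappa u_{x_0}$ and seek an upper bound via Proposition~\ref{thm: glb_mp}, the relevant inequality is $-L_u w = -\diver\F - \mu + \kappa n\det D^2 u$, and the term $+\kappa n\det D^2 u$ enters the \emph{positive} part of the effective measure. Its growth on sections is $\sim \kappa s^{n/2}$, so Proposition~\ref{thm: glb_mp} contributes an extra source term of order $C_\star \kappa h$ with a structural constant $C_\star$ that you have no way to make small. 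With your calibration $\kappa h \sim \osc_{S_u(x_0,h)} v$, this extra term is $C_\star\,\osc_{S_u(x_0,h)} v$, which destroys the claimed contraction $\theta<1$ unless $C_\star$ happens to be small. The same obstruction arises for the lower bound. In short, $u_{x_0}$ can absorb the ceiling value \emph{or} keep the source controlled, but not both at once; a dyadic oscillation iteration based on it does not close.

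The paper's proof avoids this by using an explicit strict supersolution $w_{\delta_2}$ (from \cite[Lemma~13.7]{Le24}) satisfying $L_u w_{\delta_2} \leq -n\Lambda < 0$, so subtracting it from $v$ does not worsen the source. Moreover, the argument is a one-shot barrier comparison on a Euclidean ball $\Omega \cap B_{\delta_2}(0)$ with a final optimization over $\varepsilon$ and $\delta_2 = \varepsilon^{1/\alpha_0}$, not a dyadic iteration on sections. The exponent $\alpha_1 = \alpha_0/(\alpha_0 + 3n)$ arises from the bound $\frac{6}{\delta_2^3} w_{\delta_2}(x) \leq C\delta_2^{-3n}|x|$ (coming from the explicit form $M_{\delta_2} \sim \delta_2^{-3(n-1)}$ in the barrier), not from Savin's localization or section volume estimates as you suggest.
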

\begin{proof} The proof follows closely that of \cite[Proposition 14.32]{Le24} using maximum principles and barriers. Since our setting is different, we include the details for the reader's convenience.

	Since $\alpha_0 \leq \alpha$, we have $\|\phi\|_{C^{\alpha_0}(\partial \Omega)} \leq C_0(\alpha, \alpha_0, \rho) \|\phi\|_{C^{\alpha}(\partial \Omega)}$. 
	Thus, when proving the desired result, we can replace $\|\phi\|_{C^{\alpha}(\partial \Omega)}$ by $\|\phi\|_{C^{\alpha_0}(\partial \Omega)}$. 
	
	Let $L_u:= U^{ij}D_{ij}$. By homogeneity, we can assume that
	\[
	\mathbf{K}:= \|v\|_{L^\infty(\Omega)} + \|\phi\|_{C^{\alpha_0}(\partial \Omega)} + \|(D^2u)^{1/2} \F\|_{L^q(\Omega)} + M_0= 1.
	\]
	
	By Proposition \ref{thm: glb_mp} (ii),  
	\begin{equation*} \|v\|_{L^\infty(\Omega)}\leq C(n, \lambda,\Lambda, \rho, q,\e)\big(\|\phi\|_{C^\alpha(\partial \Omega)} + \|(D^2u)^{1/2} \F\|_{L^q(\Omega)}+ M_0\big).\end{equation*}
	
	Without loss of generality, we assume $x_0 = 0$, $u(0) = 0$, and $Du(0) = 0$. It suffices to show, using barriers, that for all $x \in \Omega \cap  B_{\delta}(0)$, 
	\[
		|v(x) - v(0)| \leq C|x|^{\alpha_1}, 
	\]
	for some structural constants  $\delta$ and $C$ depending only on $n, \lambda, \Lambda, \alpha, \rho$, $q$ and $\e_0$. 
	
\medskip	
	For any $\varepsilon \in (0, 1)$, we let
	\[
		h_\pm:= v - v(0) \pm \varepsilon \pm \frac{6}{\delta_2^3}w_{\delta_2} \; \quad \text{ in } A:= \Omega\cap B_{\delta_2}(0),
	\]
	where $\delta_2\in (0, 1)$ small is to be chosen later and the function $w_{\delta_2}$ is defined by 
	\[
		w_{\delta_2}(x) = w_{\delta_2}(x', x_n):= M_{\delta_2}x_n + u(x) - \tilde{\delta_2}|x'|^2 - \Lambda^n (\lambda \tilde{\delta_2})^{1-n} x_n^2\quad \text{ for } x=(x', x_n) \in \overline{\Omega},
	\]
	where \[\tilde{\delta_2}:= \delta_2^3/2 \quad\text{and}\quad  M_{\delta_2}:= \Lambda^n(\lambda \tilde{\delta_2})^{1-n}.\] 
	
\medskip	
	From \cite[Lemma 13.7]{Le24},  $w_{\delta_2}$ has the following properties for $\delta_2$ small:
	\begin{equation}\label{pr: glb_bd_1}
		\begin{cases}
			&U^{ij}D_{ij}w_{\delta_2} \leq -\max \{\tilde{\delta_2} \text{ trace}(U), n\Lambda\} \quad\text{ in } \Omega, \\
			&w_{\delta_2} \geq 0\quad \text{ in } \overline{\Omega\cap B_{\delta_2}(0)},\quad  \text{ and }\quad  w_{\delta_2} \geq \tilde{\delta_2} \quad\text{ on } \Omega \cap \partial B_{\delta_2}(0).
		\end{cases}
	\end{equation}
	Note that if $x\in \partial \Omega$ and $|x| \leq \delta_1(\varepsilon):= \varepsilon^{1/\alpha_0}$, then
	\[
		|v(x) - v(0)| = |\phi(x) - \phi(0)| \leq |x|^{\alpha_0} \leq \varepsilon.
	\]
	If $x\in \Omega \cap \partial B_{\delta_2}(0)$, we have $\frac{6}{\delta_2^3}w_{\delta_2}(x) \geq 3$ from \eqref{pr: glb_bd_1}. Moreover, we have
	\[
		|v(x) - v(0) \pm \varepsilon| \leq 2\|v\|_{L^\infty(A)} + \varepsilon \leq 2\mathbf{K} + \varepsilon \leq 3.
	\]
	It follows that if $\delta_2 \leq \delta_1$, then
	\[
		h_- \leq 0\quad \text{ and } h_+ \geq 0\quad \text{ on } \partial A.
	\]
	Also from \eqref{pr: glb_bd_1}, we have
	\[
				L_u h_+  \leq \diver \F +\mu,  \quad L_u h_- \geq \diver \F +\mu \quad \text{ in } A.
	\]
	By \cite[inclusion (14.59)]{Le24}, we have $A\subset S_u(0, \delta_2^{3/2})$ when $\delta_2$ is small.

\medskip	
	Now, applying the maximum principle in Proposition \ref{thm: glb_mp} (ii) to $h_+$ and $h_-$, we obtain
	\begin{equation}\label{pr: glb_bd_3}
		\begin{aligned}
			\max\{-h_+, h_-\}  \leq  C_0\big(\|(D^2u)^{1/2} \F\|_{L^q(A)} \delta_2^{\frac{3}{2} \cdot \frac{q-n}{2q}} + M\delta_2^{\frac{3}{2}\cdot \e_0}\big)
			 \leq C_1 \delta_2^{n{\tilde{\beta}}} \text{ in } A,
		\end{aligned}
	\end{equation}
	where $C_1 > 1$ depends only on $n, \lambda, \Lambda, q, \e_0$, and $\rho$, and \[\tilde{\beta} = \min\big\{\frac{3(q-n)}{4nq}, \frac{3\e_0}{2n}\big\}.\] 
	
	Next, from the boundary estimates of $u$ (see \cite[Proposition 8.23]{Le24}), we have for $\delta_2$ small
	\begin{equation}\label{pr: glb_bd_5}
	\begin{split}
		w_{\delta_2}(x) \leq M_{\delta_2}x_n + u(x) &\leq M_{\delta_2}|x| + C_2(n, \lambda, \Lambda,\rho) |x|^2 |\log |x||^2\\& \leq 2 M_{\delta_2}|x|\, \quad\quad \text{ for all  } x\in A.
		\end{split}
	\end{equation}
	Let $\delta_2 = \delta_1$. Combining \eqref{pr: glb_bd_3} and \eqref{pr: glb_bd_5} and recalling the definition of $h_+$ and $h_-$, we obtain
	\[
		\begin{aligned}
			|v(x) - v(0)|  \leq \varepsilon + \frac{6}{\delta_2^3} w_{\delta_2}(x) + C_1 \delta_2^{n\tilde{\beta}}
			& \leq \varepsilon + \frac{12}{\delta_2^3} \Lambda^n(\lambda\frac{\delta_2^3}{2})^{1-n}|x|  + C_1 \delta_2^{n\tilde{\beta}}\\
			& = \varepsilon + C_3(n, \lambda, \Lambda)\delta_2^{-3n}|x| + C_1 \delta_2^{n\tilde{\beta}}\\
			& = \varepsilon + C_3\varepsilon^{-3n/\alpha_0}|x| + C_1 \delta_2^{n\tilde{\beta}}.\\
		\end{aligned}
	\]
	If we further restrict $\varepsilon \leq C_1^{-1}$,
	then 
	\[
		C_1 \delta_2^{n\tilde{\beta}} = C_1 \delta_1^{n\tilde{\beta}} = C_1 \varepsilon^{\frac{1}{\alpha_0}n\tilde{\beta}} \leq C_1 \varepsilon^2 \leq \varepsilon.
	\]
	From the previous two estimates, we have for all $\varepsilon \leq C_1^{-1}$ and $|x| \leq \delta_1 = \varepsilon^{1/\alpha_0}$ that
	\begin{equation}\label{pr: glb_bd_6}
		|v(x) - v(0)| \leq 2\varepsilon + C_3\varepsilon^{-3n/\alpha_0}|x|.
	\end{equation}
	We may choose $\varepsilon = |x|^{\frac{\alpha_0}{\alpha_0 + 3n}}$ where $|x| \leq \delta:= C_1^{-(\alpha_0 + 3n)/\alpha_0}$ so that the conditions on $\varepsilon$ and $x$ are satisfied. Then, from \eqref{pr: glb_bd_6}, we have
	\[
		|v(x) - v(0)| \leq (2+ C_3)|x|^{\frac{\alpha_0}{\alpha_0 + 3n}} = (2+ C_3)|x|^{\alpha_1} \quad \text{in }\quad \Omega \cap B_{\delta}(0).
	\]
	 The proposition is proved.
\end{proof}

If $x_0\in \Omega$, denote by \[\bar{h}(x_0):=\sup \big\{h>0:  S_u(x_0, h) \subset \Omega\big\}\] the maximal height of all sections of $u$ centered at $x_0$ and contained in $\Omega$.
$S_u(x_0, \bar{h}(x_0))$ is called the \textit{maximal interior section of $u$ centered at $x_0$}, and it is tangent to the boundary.

 Thanks to Savin's boundary localization theorem \cite[Theorem 3.1]{S1}\label{thm: savin}, we have the following useful properties of the maximal interior sections; see \cite{S2} and \cite[Proposition 9.2]{Le24}.
\begin{prop}[Shape of maximal interior sections]
\label{prop: max_sec}  Let $u$ and $\Omega$ satisfy \eqref{glb_1}--\eqref{glb_4}. Assume that for some $y \in \Omega$, the maximal interior section $S_u(y, \bar{h}(y)) \subset \Omega$ is tangent to $\partial \Omega$ at 0; that is, $\partial S_u(y, \bar{h}(y)) \cap \partial \Omega=\{0\}$. If $h:=\bar{h}(y) \leq c$ where $c=c(n, \lambda, \Lambda, \rho)>0$ is small, then there exists a small positive constant $\kappa_0(n, \lambda, \Lambda, \rho)$ such that
\[
\begin{cases}
	D u(y)=a e_n \quad \text { for some } a \in\left[\kappa_0 h^{1 / 2}, \kappa_0^{-1} h^{1/2}\right], \\
	\kappa_0 E_h \subset S_u(y, h)-y \subset \kappa_0^{-1} E_h, \quad \text{and} \quad
	\kappa_0 h^{1 / 2} \leq \operatorname{dist}(y, \partial \Omega) \leq \kappa_0^{-1} h^{1/2},
\end{cases}
\]
where $e_n:=(0, 0,\cdots, 1 )\in\R^n$ and  the ellipsoid $E_h$ is given by
  $ E_h = A_h^{-1}B_{h^{1/2}}(0)$,
with $A_h$ being a linear transformation on $\R^n$ with the following properties:
\[
    \det A_h = 1, \quad A_hx = x - \tau_hx_n, \quad \tau_h\cdot e_n = 0, \quad \|A_h^{-1}\| + \|A_h\| \leq \kappa_0^{-1}|\log h|.
\]
\end{prop}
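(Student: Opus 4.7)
The plan is to derive all three assertions by applying Savin's boundary localization theorem at the tangency point 0 and then comparing the maximal interior section with nearby boundary sections of $u$. After subtracting the affine function $u(0) + Du(0) \cdot x$ from $u$ (which does not alter sections) and rotating using the interior ball at 0 provided by \eqref{glb_1}--\eqref{glb_4}, we may assume $u(0) = 0$, $Du(0) = 0$, and that the inward unit normal to $\partial \Omega$ at 0 is $e_n$. Savin's boundary localization theorem, applicable under \eqref{glb_1}--\eqref{glb_4}, then provides, for each small $t>0$, a unimodular linear map $A_t$ of the form $A_t x = x - \tau_t x_n$ with $\tau_t \cdot e_n = 0$ and $\|A_t\| + \|A_t^{-1}\| \leq C|\log t|$ such that $S_u(0, t) \cap \Omega$ is comparable to $A_t^{-1} B_{t^{1/2}}(0) \cap \Omega$, with constants depending only on $n, \lambda, \Lambda, \rho$; in particular, for any $z \in S_u(0,t) \cap \Omega$, one has $\dist(z, \partial \Omega) \lesssim t^{1/2}$ and $|Du(z)| \lesssim t^{1/2}|\log t|$.

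The direction of $Du(y)$ is forced by the tangency condition. Set $w(x) := u(x) - u(y) - Du(y) \cdot (x-y) - h$. Then $w$ is convex, $w \leq 0$ on $\overline{S_u(y, h)}$, $w = 0$ on $\partial S_u(y, h)$, and $w(0) = 0$ since $0 \in \partial S_u(y,h)$. Because $\partial S_u(y,h)$ is tangent to $\partial \Omega$ at 0, the outward unit normal to $S_u(y,h)$ at 0 must coincide with that of $\Omega$, namely $-e_n$. This normal is parallel to $-Du(y) + Du(0) = -Du(y)$, so $Du(y) = a\, e_n$ with $a \geq 0$, and non-degeneracy of the section forces $a>0$. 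The identity $w(0) = 0$ then reads $a\, y_n = u(y) + h \geq h$, giving a first structural relation between $a$ and $y_n$.

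The third step matches $S_u(y, h)$ with the boundary section $S_u(0, t_0)$ for a suitable $t_0$ comparable to $h$. For the upper sandwich, using $Du(0) = 0$, convexity of $u$, and Savin's gradient bound, I would show $S_u(y, h) \subset S_u(0, C h)$ by estimating $u(x) \leq C a |x|$ on the interior section and invoking the quadratic growth coming from \eqref{glb_4}; for the lower sandwich, I would check that the translate of a smaller boundary section $S_u(0, c h)$ fits inside $S_u(y,h)$ by comparing the supporting planes at $y$ with those at 0 through the inclusion property of sections (Theorem \ref{thm: inclusion}). These two inclusions combined with the ellipsoidal shape of $S_u(0, Ch)$ from Savin's theorem simultaneously pin down $\dist(y, \partial \Omega) \sim y_n \sim h^{1/2}$, $a = |Du(y)| \sim h^{1/2}$, and the two-sided inclusion $\kappa_0 E_h \subset S_u(y, h) - y \subset \kappa_0^{-1} E_h$ using the same affine normalization $A_h$ (up to constants) that Savin's theorem produces at 0. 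The hardest point will be fitting the interior section precisely into Savin's family $\{A_t\}_{t>0}$ while preserving the logarithmic bound $\|A_h^{\pm 1}\| \leq \kappa_0^{-1}|\log h|$; this requires using the slow variation $\|A_{t}A_{2t}^{-1}\| \leq C$ across comparable scales together with Theorem \ref{thm: inclusion}, so that passing from the boundary section at 0 to the centered interior section at $y$ costs only a bounded factor rather than an additional logarithmic loss.
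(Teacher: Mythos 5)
First, a point of comparison: the paper does not prove this proposition at all — it is imported as a known consequence of Savin's boundary localization theorem, with the proof delegated to \cite{S2} and \cite[Proposition 9.2]{Le24}. Your first two paragraphs do match the standard opening of that argument and are essentially correct: after normalizing $u(0)=0$, $Du(0)=0$ and rotating so that $\{x_n=0\}$ supports $\Omega$ at $0$, the uniqueness of the supporting hyperplane of the sublevel set at $0$ (using $Du(y)\neq 0$) forces $Du(y)=a\,e_n$ with $a>0$, and $0\in\partial S_u(y,h)$ gives the identity $a\,y_n=u(y)+h\geq h$.

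Beyond that, however, the proposal is a plan rather than a proof, and the steps you defer are precisely where the content of the proposition lies. Concretely: (i) you invoke Theorem \ref{thm: inclusion} for the boundary section $S_u(0,ch)$, but that theorem requires $S_u(x_0,2t)\Subset\Omega$ and does not apply to sections centered at boundary points; the corresponding engulfing properties near $\partial\Omega$ are themselves consequences of the localization theorem and would have to be quoted or proved separately. (ii) The logarithm-free bounds are not reachable from the estimates you list: the gradient bound $|Du(z)|\lesssim t^{1/2}|\log t|$ on $S_u(0,t)$ only yields $a\lesssim h^{1/2}|\log h|$, while the proposition asserts $a\leq\kappa_0^{-1}h^{1/2}$; likewise the two-sided bound $\dist(y,\partial\Omega)\sim h^{1/2}$ and the transfer of the matrix $A_h$ from the boundary section at $0$ to the section centered at $y$ with only a bounded (not logarithmic) loss hinge on the rescaling by $A_h$ and on the fact that $A_h$ leaves the $x_n$-coordinate unchanged — none of which is carried out, and you yourself flag this as ``the hardest point.'' (iii) The proposed upper inclusion $S_u(y,h)\subset S_u(0,Ch)$ is circular as written: on $S_u(y,h)$ one has exactly $u(x)<a\,x_n$, so concluding $u<Ch$ there requires $a\,\sup_{S_u(y,h)}x_n\leq Ch$, i.e.\ precisely the bounds $a\lesssim h^{1/2}$ and $\sup_{S_u(y,h)}x_n\lesssim h^{1/2}$ you are trying to derive, and \eqref{glb_4} only gives quadratic separation for points of $\partial\Omega$, not in the interior. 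In short, the strategy (localize at the tangency point and compare with boundary sections) is the right one, but to close these gaps you would essentially have to reproduce the argument of \cite{S2} or \cite[Proposition 9.2]{Le24}, which the paper simply cites.
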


\medskip
Finally, we are able to prove Theorem \ref{glb_holder}. 
\begin{proof}[Proof of Theorem \ref{glb_holder}]
	Note that our assumption implies \eqref{glb_1}--\eqref{glb_4}, where $\rho$ now depends only on $n, \lambda, \Lambda, \|u\|_{C^3(\partial \Omega)}$, $R$, and the $C^3$ regularity of $\partial \Omega$; see \cite[Proposition 4.7]{Le24}. 
Therefore, 
	Proposition \ref{prop: bd_int_est}  can be applied to all $x_0 \in \partial \Omega$. Our proof is similar to that of \cite[Theorem 13.2]{Le24}. Since our setting is slightly different, we include the details for the reader's convenience.

	By Proposition \ref{thm: glb_mp} (ii),  
	\begin{equation}\label{vbound} \|v\|_{L^\infty(\Omega)}\leq C(n, \lambda,\Lambda, \rho, q,\e)\big(\|\phi\|_{C^\alpha(\partial \Omega)} + \|(D^2u)^{1/2} \F\|_{L^q(\Omega)}+ M_0\big).\end{equation}

    By homogeneity, we can assume that
	\[
		\mathbf{K}:= \|\phi\|_{C^\alpha(\partial \Omega)} + \|(D^2u)^{1/2} \F\|_{L^q(\Omega)} + M_0= 1.
	\]
	By the global H\"older gradient estimate in Lemma \ref{mu_bound}, we have
	\begin{equation}
	\label{Duglb2}
	M:=\sup_{x, y\in\overline{\Omega}}|Du(x)-Du(y)|\leq C_\star(n,\lambda,\Lambda,\rho).\end{equation}
   Let $c, \kappa_0\in (0, 1)$ be as in Proposition \ref{prop: max_sec} and let $\Omega_s:= \{x\in \Omega: \dist(x, \partial \Omega) > s\}$ for $s>0$. 

\medskip   
 \noindent      
    \textbf{Step 1:} H\"{o}lder estimates in the interior of $\Omega$.

By Proposition \ref{prop: max_sec}, 
    the maximal interior section $S_u(y, \overline{h}(y))$ of  $u$ centered at $y\in \overline \Omega_c$ satisfies 
	\begin{equation}\label{glb_H1}
		\bar h(y)\geq (c\kappa_0)^2=:c_1.
	\end{equation}
 Therefore, from \eqref{Duglb2} (see also \eqref{Ballha}), we can find $c_2 =c_2(n, \lambda, \Lambda,\rho)\in (0, c_1)$ such that 
    \begin{equation}
    \label{c2def}
   B_{c_2}(y)\subset S_u(y, c_1/8)\subset S_u(y, c_1/2)\Subset\Omega\quad\text{whenever }\quad y\in \overline \Omega_c.
    \end{equation}
From Theorem \ref{thm: int}, \eqref{vbound},  \eqref{Duglb2}, and \eqref{c2def}, we can find  $\gamma=\gamma(n, \lambda, \Lambda, q, \e) \in (0, 1)$ such that
    \begin{equation*}
        |v(z_1) - v(z_2)| \leq C_1(n, \lambda, \Lambda, \rho, q,\e)|z_1 - z_2|^\gamma\quad\text{for all } z_1, z_2 \in B_{c_2}(y).
    \end{equation*}
Repeating the above argument, we can find $c_3(n, \lambda, \Lambda,\rho)\in (0, c_2)$ such that 
        \begin{equation*}
        |v(z_1) - v(z_2)| \leq C_2(n, \lambda, \Lambda, \rho, q,\e)|z_1 - z_2|^\gamma\quad\text{for all } z_1, z_2\in B_{c_3}(y) \quad\text{where }\quad y\in\Omega_{c_2}.
            \end{equation*}
    
    \medskip
 \noindent   
	\textbf{Step 2:} H\"{o}lder estimates in a maximal interior section whose center is in $\overline{\Omega}\setminus\Omega_{c_2}$. 
	
	Let $y\in \Omega$ be such that $\dist (y, \partial \Omega) = r \leq c_2$. Let $S_u(y, \overline{h})$ be the maximal interior section of $u$ centered at $y$ and $y_0 \in \partial \Omega \cap \partial S_u(y, \bar{h})$ where $\bar h=\bar h(y)$. Then, the arguments in \eqref{glb_H1} and
	\eqref{c2def} give $\bar h\leq c$. By Proposition \ref{prop: max_sec}, 
	\begin{equation}\label{glb_H2}
		\kappa_0 \overline{h}^{1/2} \leq r \leq \kappa_0^{-1} \overline{h}^{1/2}, \text{ and } \kappa_0 E \subset S_u(y, \overline{h}) - y \subset \kappa_0^{-1} E,
	\end{equation}
	where  $E:=\overline{h}^{1/2} A_{\overline{h}}^{-1} B_1(0)$ with  $A_{\overline{h}}$ being a linear transformation on $\R^n$ such that
	\begin{equation}\label{glb_H3}
		 \|A_{\overline{h}}\| + \|A_{\overline{h}}^{-1}\| \leq \kappa_0^{-1}|\log \overline{h}| \quad \text{ and } \det A_{\overline{h}} = 1.
	\end{equation}
	From \eqref{glb_H2}, \eqref{glb_H3}, and
	\[(1/8) (S_u(y, \bar{h})-y)\subset S_u(y, \bar{h}/8)-y,\] we obtain constants $C_3, \overline{c_2} > 0$ depending only on $n, \lambda, \Lambda$ and $\rho$ such that
    \begin{equation}\label{glb_H6}
		B_{C_3r|\log r|}(y) \supset S_u(y, \overline{h}) \supset S_u(y, \overline{h}/8) \supset B_{\overline{c_2}\frac{r}{|\log r|}}(y) \supset B_{\overline{c_2} r^2}(y).
	\end{equation}

	Let $\bar v = v-v(y_0)$. Then
	\[U^{ij} D_{ij} \bar v = \diver \F + \mu\quad\text{in}\quad\Omega.\]
By Theorem \ref{thm: int}, there exists $C_4= C_4(n, \lambda, \Lambda, q, \e,\rho)$ such that for all $z_1, z_2 \in S_u(y, \bar{h}/8)$
    \begin{equation}\label{glb_H4}
    \begin{split}
      |v(z_1)-v(z_2)|&=   |\bar v(z_1)-\bar v(z_2)|\\
       &\leq C_4\big(\|\bar v\|_{L^\infty(S_u(y, \bar{h}))} + \|(D^2 u)^{1/2} \F\|_{L^q(\Omega)} {\bar h}^{\frac{q-n}{2q}}+ M_0\bar h^\e\big){\bar h}^{-\gamma} |z_1 - z_2|^{\gamma}. 
       \end{split}
       \end{equation}

        \medskip
    We need to estimate $\|\bar v\|_{L^\infty(S_u(y, \bar{h}))}$. 
     By \eqref{glb_H6} and Proposition \ref{prop: bd_int_est} applied at $y_0\in\p\Omega$, 
    \begin{equation}
    \label{barvb}
        \|\bar v\|_{L^\infty(S_u(y, \bar{h}))} \leq C_5(n, \lambda, \Lambda, \rho, \alpha, q,\e)[\diam(S_u(y, \bar{h}))]^{\alpha_1}\leq C_6(r|\log r|)^{\alpha_1},
    \end{equation}
    where $C_6 = C_6(n, \lambda, \Lambda, \rho, \alpha, q,\e) > 0$ and $\alpha_1 = \alpha_1(n, \alpha, q,\e) \in (0, \alpha)$.

    \medskip

   From   \eqref{glb_H2}, \eqref{glb_H6}--\eqref{barvb}, $q>n$, and $\mathbf{K} \leq 1$, we can find 
    $\beta = \beta(n, \lambda, \Lambda, q, \e, \alpha)\in (0, \min\{\alpha_1,\gamma\})$ such that
    \begin{equation}\label{glb_H8}
        |v(z_1) - v(z_2)|\leq C_7(n, \lambda, \Lambda, \rho, q, \e, \alpha)|z_1 - z_2|^{\beta}\quad\quad\text{for all }z_1, z_2 \in S_u(y, \bar{h}/8).
    \end{equation}
    
\medskip
 \noindent      
	\textbf{Step 3:} Completion of the proof. Let $x, y \in \Omega$. We show that 
\begin{equation}
	\label{vxyb}
		|v(x) - v(y)| \leq C_8(n, \lambda, \Lambda, \alpha, q, \e, \rho) |x - y|^{\beta}.
	\end{equation}
By  \eqref{vbound}, it suffices to consider \[|x-y|< c_3.\] Let $x_0, y_0\in \partial \Omega$ be such that $r_x:= \dist(x, \partial \Omega) = |x - x_0|$ and $r_y:= \dist(y, \partial \Omega) = |y - y_0|$. 
We may assume $r_x \leq r_y$. Let $\bar h=\bar h(y)$ be as in Step 2.

\medskip
\noindent
{\bf Case 1:} $r_y\leq c_2$. 

	If $x \in B_{\overline{c_2} r_y^2}(y)$, then we obtain \eqref{vxyb} directly from \eqref{glb_H6} and \eqref{glb_H8}.
	
	Assume now $x \notin B_{\overline{c_2}  r_y^2}(y)$. Then, $|x-y| \geq \overline{c_2}  r_y^2$, and 
	\begin{equation}\label{glb_H9}
			|x_0 - y_0| \leq |x - x_0| + |y - y_0| + |x - y| \leq 2r_y + |x - y| 
			 \leq \tilde{c_2}|x - y|^{1/2},
	\end{equation}
	where $\tilde{c_2}= \tilde{c_2}(n, \lambda, \Lambda, \rho)$.
	Note that
	\begin{equation}\label{glb_H10}
		|v(x) - v(y)| \leq |v(x) - v(x_0)| + |v(y) - v(y_0)| + |v(x_0) - v(y_0)|. 
	\end{equation}
	We can apply Proposition \ref{prop: bd_int_est} to get
	\begin{equation}\label{glb_H11}
		|v(x) - v(x_0)| \leq C_6|x - x_0|^{\alpha_1} \quad \text{ and }\quad |v(y) - v(y_0)| \leq C_6|y - y_0|^{\alpha_1}.
	\end{equation}
	Combining \eqref{glb_H9}--\eqref{glb_H11}, and recalling the $C^\alpha$ regularity of $v$ on $\partial \Omega$, we obtain
	\[
|v(x) - v(y)| \leq 2C_6r_y^{\alpha_1} + (\tilde{c_2})^{\alpha} |x-y|^{\alpha/2}
			 \leq C_8(n, \lambda, \Lambda, \alpha, q, \e, \rho) |x - y|^{\beta}.
	\]
\noindent
{\bf Case 2:} $r_y\geq c_2$.  Since $|x-y|< c_3$, \eqref{vxyb} follows from Step 1.
	
The proof of the theorem is complete.
\end{proof}

\section{Application to singular Abreu type equations}
\label{sec_Ab}

In this section, we will use Theorem \ref{glb_holder} to establish the Sobolev solvability in all dimensions for a class of singular fourth-order Abreu type equations. These equations arise from the approximation analysis of convex functionals subject to convexity constraints such as the Rochet--Chon\'e model \cite{RC} with a quadratic cost in the monopolist's problem in economics. We refer the readers to \cite{CR, KLWZ, Le_CPAM, Le23, LZ} for more details. 

\medskip
Theorem \ref{glb_holder} provides us a tool to directly establish, in all dimensions, the global higher-order estimates for singular Abreu equations that were first obtained in the work of Kim, Wang, Zhou, and the second author \cite{KLWZ}. Moreover, it allows us to obtain new solvability results in dimensions at least three that were not accessible by previous methods.

\medskip
Our solvability results state as follows.

\begin{thm}
[Solvability of the second boundary value problem for singular Abreu type equations]
\label{Abthm}
 Let $\Omega\subset\R^n$ be a smooth, uniformly convex domain.  Let $G(t): (0,\infty)\to\R$ be \[\text{either}\quad G(t) =\log t \quad \text{or}\quad G(t) =\log t/\log \log (t+ e^{e^{4n}}).\] 
Assume that $f\in L^p(\Omega)$ with $f\leq 0$ and $p>n$, $\varphi\in W^{4, p}(\Omega)$ and $\psi\in W^{2, p}(\Omega)$ with $\min_{\p \Omega}\psi>0$. 
Consider the following second boundary value problem for a uniformly convex function $u$:
\begin{equation}
\label{eq:Ab}
\left\{
  \begin{alignedat}{2}U^{ij}D_{ij}w~& =-\Delta u + f~&&\text{\ in} ~\ \ \Omega, \\\
 w~&=G' (\det D^2 u)~&&\text{\ in}~\ \ \Omega,\\\
u ~&=\varphi,\quad w ~= \psi~&&\text{\ on}~\ \ \p \Omega.
\end{alignedat}
\right.
\end{equation}
Here $(U^{ij})= (\det D^2 u)(D^2 u)^{-1}$. Then,  there exists a uniformly convex solution $u\in W^{4, p}(\Omega)$ to \eqref{eq:Ab}
with 
\begin{equation}
\label{ubdata}
\|u\|_{W^{4,p}(\Omega)}\leq C=C(n, p, G, \varphi, \psi, \Omega).
\end{equation}
\end{thm}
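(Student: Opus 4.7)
The plan is to use the Leray--Schauder degree theory in conjunction with uniform a priori estimates, where Theorem \ref{glb_holder} plays the decisive role in producing the H\"older regularity of $w$. I would parametrize the problem by $s \in [0,1]$ by interpolating between a decoupled system at $s=0$ (e.g., $\det D^2 u$ prescribed so that the Monge--Amp\`ere and linearized Monge--Amp\`ere equations separate and can be solved by classical theory) and the target system at $s=1$. Existence at $s=0$ is classical; the Leray--Schauder scheme then reduces the task to deriving uniform a priori estimates along the family. Throughout, the map being iterated sends $v$ to $T(v):=w$ obtained by first solving $\det D^2 u = (G')^{-1}(v)$ with $u|_{\p\Omega}=\varphi$ via the Monge--Amp\`ere equation, then solving $U^{ij}D_{ij}w = -\Delta u + f$ with $w|_{\p\Omega}=\psi$.

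The a priori estimates proceed in several stages. First I would obtain uniform two-sided bounds $0 < \lambda \leq \det D^2 u \leq \Lambda$ via maximum principle arguments adapted from \cite{KLWZ} for $w$: the lower bound on $w$ exploits $\psi>0$ on $\p\Omega$ together with the sign $f\le 0$ and the monotonicity of $G'$, which then gives the upper bound on $\det D^2 u$ since $G'$ is strictly decreasing; the upper bound on $w$ and hence the lower bound on $\det D^2 u$ uses the tailored growth/singularity of $G$ at $\infty$ and $0$. This step is the main obstacle, because it requires extracting a uniform $L^\infty$ control on $w$ from the coupled system without yet having any higher-order regularity; the specific choice of $G=\log t$ or $G=\log t/\log\log(t+e^{e^{4n}})$ is precisely what makes the barrier argument close up. Once these are in hand, Caffarelli's interior $C^{2,\alpha}$ estimates and Savin's boundary $C^{2,\alpha}$ theory for the Monge--Amp\`ere equation deliver $u\in C^{1,1}(\overline{\Omega})\cap C^{2,\alpha}_{\mathrm{loc}}(\Omega)$.

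Next I would rewrite the first equation of \eqref{eq:Ab} in the divergence form
\[
U^{ij}D_{ij} w = \diver(-Du) + f,
\]
matching the framework of \eqref{eqdiv} with $\F = -Du$ and $\mu = f$. Since $u \in C^{1,1}(\overline\Omega)$, the vector field $\F$ is bounded and hence $(D^2u)^{1/2}\F \in L^q(\Omega;\R^n)$ for every $q < \infty$; and since $f \in L^p(\Omega)$ with $p > n$, Remark \ref{n2rem} supplies the section growth $|f|(S_u(z,s)) \leq C s^{(n-2)/2 + \e}$ for some $\e>0$. Applying Theorem \ref{glb_holder} with $\phi=\psi\in C^\alpha(\p\Omega)$ then yields a uniform $C^\beta(\overline{\Omega})$ estimate for $w$. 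Inverting $G'$ (smooth and bijective on $(0,\infty)$) gives $\det D^2 u\in C^\beta(\overline{\Omega})$, and a further application of Caffarelli's Schauder estimates upgrades $u$ to $C^{2,\beta}(\overline{\Omega})$.

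Finally, with $u\in C^{2,\beta}(\overline{\Omega})$, the coefficients $U^{ij}$ are H\"older continuous and uniformly elliptic, so classical Calder\'on--Zygmund theory applied to $U^{ij}D_{ij}w = -\Delta u + f\in L^p$ gives $w\in W^{2,p}(\Omega)$. Then $G'(\det D^2 u)\in W^{2,p}$ forces $\det D^2 u\in W^{2,p}$, and Caffarelli--type $W^{4,p}$ estimates for the Monge--Amp\`ere equation close the bootstrap and deliver \eqref{ubdata}. Plugging these uniform estimates into the Leray--Schauder degree argument (together with the openness/closedness for the parameter $s$) completes the existence proof.
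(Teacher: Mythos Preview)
There is a circularity in your argument at the point where you claim $u\in C^{1,1}(\overline{\Omega})$ after the Hessian determinant bounds. Caffarelli's interior $C^{2,\alpha}$ and Savin's boundary $C^{2,\alpha}$ theories require the right-hand side $\det D^2 u = (G')^{-1}(w)$ to be H\"older, but at this stage $w$ is only known to be bounded, so those results do not apply. Under $0<\lambda\le\det D^2u\le\Lambda$ alone one has merely $u\in C^{1,\alpha}(\overline{\Omega})$ and $D^2u\in L^{1+\varepsilon}$ \cite{DPFS13,Sch}, and Wang's counterexamples \cite{W95} show one cannot do better in general. Consequently your choice $\F=-Du$ fails for $n\ge 3$: you need $(D^2u)^{1/2}\F\in L^q$ for some $q>n$, which forces $\Delta u\in L^{q/2}$ with $q/2>n/2$, and this integrability is precisely what is unavailable a priori. (Note also that Theorem~\ref{glb_holder} requires $\F\in W^{1,n}$, i.e.\ $D^2u\in L^n$, which is equally out of reach.)

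The paper's remedy is to reverse your decomposition: take $\F=0$ and $\mu=-\Delta u+f$. The point is that $-\Delta u=\diver(-Du)$ is a \emph{nonpositive} measure (by convexity of $u$) with $Du$ bounded merely from convexity and the boundary data \eqref{uDub}, so Lemma~\ref{mu_bound}(ii) delivers the section growth $|{-}\Delta u|(S_u(z,s))\le C\,s^{(n-2)/2+\varepsilon}$ with no $L^p$ control on $D^2u$ whatsoever; combined with Remark~\ref{n2rem} for $f$, Theorem~\ref{glb_holder} then yields $w\in C^\beta(\overline{\Omega})$ directly. Only after this does $\det D^2u$ become H\"older, and your $C^{2,\beta}\to W^{2,p}\to W^{4,p}$ bootstrap proceeds correctly. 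A secondary gap: your account of the lower bound for $\det D^2u$ is too vague to be a proof. The paper obtains it via the Legendre transform, applying the ABP estimate to $w^*-|y|^2/2$ on $\Omega^*=Du(\Omega)$, and the specific forms of $G$ enter only to invert the resulting bound on $w^*$ back to $\det D^2u$.
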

\medskip
Before sketching the proof of Theorem \ref{Abthm}, we make some pertinent remarks and related analysis.

\medskip
When $G(t)=\log t$, the expression $U^{ij}D_{ij} w= U^{ij}D_{ij} [(\det D^2 u)^{-1}]$ appears in the Abreu's equation \cite{Ab} in the constant scalar curvature problem in K\"ahler geometry \cite{D1}. For a convex function $u$ without further regularity, $\Delta u$ can be just a nonnegative Radon measure so it is a very singular term. For this reason, we call the first equation of \eqref{eq:Ab} singular Abreu equation. This fourth-order equation in $u$ can be rewritten as a system of two equations: one is a Monge--Amp\`ere equation for $u$ in the form of
\[\det D^2 u = G'^{-1} (w),\]
and the other is a linearized Monge--Amp\`ere equation for $w$ in the form of
\[U^{ij}D_{ij}w =-\Delta u + f.\]
Since we prescribe the boundary values of both $u$ and its Hessian determinant $\det D^2u$ via $w$, we call \eqref{eq:Ab} a second boundary value problem.

\medskip
Without the term $-\Delta u$ on the right-hand side of  \eqref{eq:Ab}, Theorem \ref{Abthm} was proved in \cite{Le_JDE} (see Theorems 1.1 and 1.2 and Remark 1.4 there) without the sign restriction on $f$. However, when $-\Delta u$ appears on  the right-hand side of  \eqref{eq:Ab}, it creates several difficulties in establishing solvability results for  \eqref{eq:Ab} using degree theory and a priori estimates.

\medskip
The first difficulty lies in obtaining the \textit{a priori} positive lower and upper bounds for $\det D^2 u$, which is a critical step in 
applying the regularity results of the linearized Monge-Amp\`ere equation. For example, it is not known if one can obtain a lower bound for $\det D^2u $ in dimensions $n\geq 3$ for $G(t) = t^{\frac{1}{n+2}}$ which arises from the affine maximal surface equation \cite{TW00}.

\medskip
The second difficulty, granted that the bounds $0<\lambda\leq\det D^2 u\leq\Lambda<\infty$ have been established, consists in obtaining H\"older estimates for $w$ in the linearized Monge-Amp\`ere equation
$U^{ij}D_{ij} w=-\Delta u + f$.  If one applies the global H\"older estimates in \cite{LN3}, this requires $\Delta u$ to be in $L^q$ with $q>n/2$.
However, $\Delta u$ is a priori at most $L^{1+\e}$ for some small constant $\e(\lambda,\Lambda, n)>0$ (see \cite{DPFS13, Sch, W95}) so it does not have enough integrability in dimensions $n\geq 3$. New ideas are required. 

\medskip
When $G(t) = \log t$, Theorem \ref{Abthm} was proved in \cite[Theorem 1.1]{KLWZ}. The key idea in \cite{KLWZ} is to transform the equation $U^{ij}D_{ij} w=-\Delta u + f$ with singular term $-\Delta u$ into {\it a family of linearized Monge--Amp\`ere equations} with bounded drifts and $L^p$ right-hand sides for which the interior Harnack inequality and H\"older estimates in \cite{Le_Harnack} are applicable and at each boundary point $x_0$ of interest, one can make the drift vanish at that point for the purpose of obtaining pointwise H\"older estimates at $x_0$. Indeed, let 
\begin{equation}
\label{twist1}\eta(x):= w(x) e^{|Du(x)-Du(x_0)|^2/2}.\end{equation}
Then $\eta$ solves
\begin{equation}
\label{twist2}U^{ij} D_{ij}\eta -(\det D^2 u)(Du-Du(x_0))\cdot D\eta = e^{|Du-Du(x_0)|^2/2} f.
\end{equation}

\medskip
Note that once the Hessian determinant bounds  have been established, the potential method in \cite{CWZ} gives new interior higher-order estimates for \eqref{eq:Ab}; see \cite[Theorem 5.1]{CWZ}. Here, we are able to handle estimates up to the boundary. This is crucial for the solvability using the degree theory.

\medskip
Interestingly, the transformations \eqref{twist1} and \eqref{twist2} are very specific to the case $G(t) = \log t$. They do not transform in any helpful ways for other $G$, including
$G(t) =\log t/\log \log (t+ e^{e^{4n}})$. This case is not accessible by the methods in \cite{KLWZ, Le_CPAM} in dimensions at least three. 
For this $G$, we can, however, use Theorem \ref{glb_holder} to obtain solvability.

\begin{proof} [Sketch of proof of Theorem \ref{Abthm}] It suffices to establish the a priori bound \eqref{ubdata} for uniformly convex solution $u\in W^{4, p}(\Omega)$ to \eqref{eq:Ab} since the existence then follows from the degree theory. 

\medskip
\noindent
{\bf Step 1:} Hessian determinant bounds. We assert that
\begin{equation}
\label{detbound}
C_1^{-1}\leq \det D^2 u\leq C_1 \quad\text{in }\Omega,\end{equation}
where $C_1=C_1(n, p, G, \varphi, \psi, \Omega) >0$.

Indeed, from the convexity of $u$, we have $\Delta u\geq 0$. Hence, recalling $f\leq 0$, we find
\[U^{ij}D_{ij}w =-\Delta u + f\leq 0 \quad\text{in }\Omega.\]
By the maximum principle, 
$w$ attains its minimum value in $\overline{\Omega}$ on the boundary. Thus \[w\geq\min_{\partial\Omega}w= \min_{\partial\Omega}\psi>0\quad \text{in }\Omega.\] 
This together with $\det D^2u=(G')^{-1} (w)$ gives the upper bound for the Hessian determinant:
\begin{equation}
\label{detuup}
\det D^2u\leq C_2:=(G')^{-1} ( \min_{\partial\Omega}\psi)\quad\text{in }\Omega.
\end{equation}
Using $u=\varphi$ on $\partial\Omega$ together with $\Omega$ being smooth and uniformly convex, 
we can construct suitable barrier functions (see \cite[Theorem 3.30]{Le24}) to obtain
\begin{equation}\label{uDub}
   \sup_{\Omega}|u| +  \|Du\|_{L^{\infty}(\Omega)}\leq C_3= C_3(n, G, \varphi, \psi, \Omega).
\end{equation}
Denote the Legendre transform $u^{\ast}$ of $u$ by $$u^{\ast}(y)=x \cdot Du(x)-u(x), \quad \text{where }y=Du(x)\in \Omega^\ast: =Du(\Omega). $$
Then
\[x= Du^* (y) \quad\text{and} \quad
D^2 u(x) = (D^2 u^*(y))^{-1}.
\]
Let $( u^{\ast ij})_{1\leq i, j\leq n}$ be the inverse matrix of $D^2u^{\ast}$ and
\begin{equation}
\label{wastdef}
\begin{split}
w^*&:= G\big((\det D^2 u^*)^{-1}\big)-(\det D^2 u^*)^{-1} G' \big(\det D^2 u^*)^{-1}\big)\\&
= G(\det D^2 u)- (\det D^2 u) G'(\det D^2 u).
\end{split}
\end{equation}
Computing as in \cite[Lemma 2.7]{Le_JDE}, we have
\[U^{ij}D_{ij} w= - u^{\ast ij} D_{ij} w^*,\]
and (see also \cite[(2.9)]{KLWZ})
\[\Delta u =  u^{\ast ii}. \]
Here, we use the notation
\[D_{ij} w^*= \frac{\p^2 w^*}{\p y_i \p y_j}.\]
It follows that
\begin{align}\label{uast_eq}
 u^{\ast ij}D_{ij} \big(w^*-|y|^2/2)\big)=- U^{ij} D_{ij} w- \Delta u= - f(x)=-f(D u^*(y)) \quad \text{in }\Omega^\ast.
\end{align}
\medskip
\noindent
Note that \eqref{uDub} implies
\begin{equation}\label{uastb}
   \text{diam}(\Omega^\ast) +  \|u^*\|_{L^{\infty}(\Omega^\ast)}\leq C_4(n, G, \varphi, \psi,  \Omega).
\end{equation}
Moreover, 
for $y= Du(x)\in \p\Omega^\ast$ where $x\in\p\Omega$, we have from \eqref{wastdef} 
\begin{equation}
\label{wastb}
\begin{split}
w^*(y)&
= G({G'}^{-1}(w(x)))- {G'}^{-1}(w(x)) w(x)\\ &= G({G'}^{-1}(\psi(x)))- {G'}^{-1}(\psi(x)) \psi(x).
\end{split}
\end{equation}
From \eqref{uastb} and \eqref{wastb}, by applying the ABP estimate \cite[Theorem 9.1]{GT} to $w^{*}-|y|^2/2$ on $\Omega^{*}$ in equation \eqref{uast_eq}, and then changing of variables $y= Du(x)$ with $dy = \det D^2 u~ dx,$ we obtain
\begin{equation*}
\begin{split}
 \|w^{\ast}-|y|^2/2\|_{L^{\infty}(\Omega^*)} &\leq \|w^{\ast}-|y|^2/2\|_{L^{\infty}(\p\Omega^*)} + C(n) \text{diam} (\Omega^*) \Big\|\frac{f(Du^{*})}{(\det {u^{*}} ^{ij})^{1/n}}\Big\|_{L^n(\Omega^*)}\\
 &\leq C + C \Big(\int_{\Omega^{*}} \frac{|f|^n(Du^{*})}{ (\det D^2 u^*)^{-1}}~ dy\Big)^{1/n}\\&=
 C + C  \Big(\int_{\Omega} \frac{|f|^n(x)}{ \det D^2 u} \det D^2 u~ dx\Big)^{1/n}  \\
 &\leq C_5 + C_5 \|f\|_{L^p(\Omega)},
 \end{split}
\end{equation*}
where $C_5=C_5(n, p, G, \varphi, \psi, \Omega)$. 

In particular, $w^*$ is bounded from below by a negative structural constant. 
Due to \eqref{wastdef},
the formula for $G$ gives a positive structural  lower bound for the Hessian determinant:
\begin{equation}
\label{detulow}
\det D^2u\geq C_6(n, p, G, \varphi, \psi,  \Omega)>0 \quad\text{in }\Omega.
\end{equation}
Combining \eqref{detuup} and \eqref{detulow}, we obtain \eqref{detbound} as asserted.

\medskip
\noindent
{\bf Step 2:} Higher-order derivative estimates.

Now, by \eqref{detbound},  \eqref{uDub}, Lemma \ref{mu_bound} and Remark \ref{n2rem}, we can apply Theorem \ref{glb_holder} with \[\F=0 \quad\text{and }\quad \mu =-\diver (Du) + f.\] We obtain some $\alpha= \alpha(n, p, G, \varphi, \psi, \Omega)\in (0, 1)$ such that
$w \in C^{\alpha}(\overline{\Omega})$ with
\[\|w\|_{C^{\alpha}(\Omega)} \leq C_7(n, p, G, \varphi, \psi, \Omega).\]
Rewriting the equation for $w$, we find
\begin{equation}
\label{uGeq}
\det D^2 u = (G')^{-1}(w)\quad \text{in}\quad \Omega,\quad u=\varphi
\quad \text{on}\quad \p\Omega,
\end{equation}
with $(G')^{-1}(w)\in C^{\alpha}(\overline{\Omega})$, 
and  $\varphi\in C^{3}(\overline{\Omega})$ because $\varphi\in W^{4,p}(\Omega)$ and $p>n$.  We obtain from the global Schauder estimates for the Monge--Amp\`ere equation  \cite[Theorem 1.1]{TW08b} that
\[\|u\|_{C^{2,\alpha}(\overline{\Omega})} \leq C_8(n, p, G, \varphi, \psi, \Omega).\] 
Thus, the first equation of (\ref{eq:Ab}) is a uniformly elliptic, second order partial differential equations in $w$
with $C^{\alpha}(\overline{\Omega})$ coefficients and $L^p$ right-hand side. Hence, from the standard $W^{2,p}$ theory for uniformly elliptic equations (see \cite[Chapter 9]{GT}), we obtain the following
 $W^{2,p}(\Omega)$ estimates
 \[\|w\|_{W^{2,p}(\Omega)} \leq C(n, p, G, \varphi, \psi, \Omega).\] 
 Now, we can differentiate and apply the standard Schauder and Calder\'on--Zygmund theories  to \eqref{uGeq} to obtain the following global 
  $W^{4, p}(\Omega)$ estimate for $u$:
\begin{equation*}
\|u\|_{W^{4,p}(\Omega)}\leq C=C(n, p, G,\varphi, \psi,\Omega).
\end{equation*}
The theorem is proved.
\end{proof}

\medskip
{\bf Acknowledgements.} The authors would like to thank Professor Bin Zhou (Peking University) and Dr. Ling Wang (Bocconi University) for their interest and helpful comments.
The authors are grateful to the referee for 
providing constructive comments that helped improve the original manuscript.

\end{document}